\def\BibTeX{{\rm B\kern-.05em{\sc i\kern-.025em b}\kern-.08emT\kern-.1667em\lower.7ex\hbox{E}\kern-.125emX}}
\numberwithin{equation}{section}
\newtheorem{theorem}{Theorem}
\numberwithin{theorem}{subsection} % important bit
\newtheorem{lemma}[theorem]{Lemma}
\newtheorem{prop}[theorem]{Proposition}
\newtheorem{coro}[theorem]{Corollary}
\newtheorem{example}[theorem]{Example}
\theoremstyle{definition}
\newtheorem{defn}[theorem]{Definition}
\newtheorem{assump}{}
\theoremstyle{remark}
\newtheorem*{remark}{Remark}
\newcommand{\lemref}[1]{Lemma~\ref{#1}}
\newcommand{\abs}[1]{\lvert#1\rvert}
\title[Parametric SO via DRO]{Parametric Scenario Optimization under Limited Data: A Distributionally Robust Optimization View\\}
\author{Henry Lam}
\email{khl2114@columbia.edu}
\affiliation{%
  \institution{Department of Industrial Engineering and Operations Research, Columbia University}
  \streetaddress{500 W. 120th Street}
  \city{New York}
  \state{NY}
  \postcode{10027}
}
\author{Fengpei Li}
\email{fl2412@columbia.edu}
\affiliation{%
\institution{Department of Industrial Engineering and Operations Research, Columbia University}
\streetaddress{500 W. 120th Street}
\city{New York}
\state{NY}
\postcode{10027}
}
\begin{abstract}
We consider optimization problems with uncertain constraints that need to be satisfied  probabilistically. When data are available, a common method to obtain feasible solutions for such problems is to impose sampled constraints, following the so-called scenario optimization approach. However, when the data size is small, the sampled constraints may not statistically support a feasibility guarantee on the obtained solution. This paper studies how to leverage parametric information and the power of Monte Carlo simulation to obtain feasible solutions for small-data situations. Our approach makes use of a distributionally robust optimization (DRO) formulation that translates the data size requirement into a Monte Carlo sample size requirement drawn from what we call a generating distribution. We show that, while the optimal choice of this generating distribution is the one eliciting the data or the baseline distribution in a nonparametric divergence-based DRO, it is not necessarily so in the parametric case. Correspondingly, we develop procedures to obtain generating distributions that improve upon these basic choices. We support our findings with several numerical examples.

\end{abstract}
\keywords{chance constraint, distributionally robust optimization, scenario optimization, parametric uncertainty}
\begin{document}

 \maketitle

\section{Introduction}

We consider optimization problems in the form
\begin{equation}\label{main_problem}
\begin{aligned}
 &\underset{x \in \mathcal{X} \subseteq \mathbb{R}^d}{\text{min}}
& & c^Tx ,\\
&\text{\quad s.t.}
& & \mathbb P(x\in \mathcal{X}_{\xi})\geq1-\epsilon,
\end{aligned}
\end{equation}
where $\mathbb P$ is a probability measure governing the random variable $\xi$ on some space $\mathcal Y$ and $\mathcal X_\xi\subseteq\mathcal X\subseteq\mathbb{R}^d$ is a set depending on $\xi$. Problem \eqref{main_problem} enforces a solution $x$ to satisfy $x\in\mathcal X_{\xi}$ with high probability, namely at least $1-\epsilon$. This problem is often known as a probabilistically constrained or chance-constrained program (CCP) \cite{prekopa2003probabilistic}. It provides a natural framework for decision-making under stochastic resource capacity or risk tolerance, and has been applied in various domains such as production planning \cite{murr2000solution}, inventory management \cite{lejeune2007efficient}, reservoir design \cite{prekopa1978flood,prekopa1978serially}, communications \cite{shi2015optimal}, and ranking and selection \cite{hong2015chance}.

We focus on the situations where $\mathbb P$ is unknown, but some i.i.d. data, say $\xi_1,\ldots,\xi_n$, are available. One common approach to handle \eqref{main_problem} in these situations is to use the so-called scenario optimization (SO) or constraint sampling \cite{campi2011sampling,nemirovski2006scenario}. This replaces the unknown constraint in \eqref{main_problem} with $x\in\mathcal X_{\xi_i},i=1,\ldots,n$, namely, by considering
\begin{equation}\label{SG_problem}
\begin{aligned}
& \underset{x \in \mathcal{X} \subseteq \mathbb{R}^d}{\text{min}}
& & c^Tx, \\
& \text{\quad s.t.}
& & x\in \mathcal{X}_{\xi_i},\ i=1,\ldots,n .
\end{aligned}
\end{equation}

Note that CCP \eqref{main_problem} is generally difficult to solve even when the set $\mathcal X_{\xi}$ is convex for any given $\xi$ and the distribution $\mathbb P$ is known \cite{prekopa2003probabilistic}.  Thus, the sampled problem \eqref{SG_problem} offers a tractable approximation for the difficult CCP even in non-data-driven situations, assuming the capability to generate these samples.
% Nonetheless, we focus on the data-driven case in this paper.
% Nonetheless, our premise in this paper is under the availability of i.i.d. data.%  (which we call safety condition for convenience and also adopting existing terminology)

Our goal is to find a good feasible solution for \eqref{main_problem} by solving \eqref{SG_problem} under the availability of i.i.d. data described above. Intuitively, as the sample size $n$ increases, the number of constraints in \eqref{SG_problem} increases and one expects them to sufficiently populate the safety set $\{\xi:x\in\mathcal X_\xi\}$, thus ultimately give rise to a feasible solution for \eqref{main_problem}. To make this more precise, we first mention that because of the statistical noise from the data, one must settle for finding a solution that is feasible with a high confidence. More specifically, define, for any given solution $x$, $$V(x,\mathbb{P})=\mathbb{P}(x\notin \mathcal{X}_{\xi})$$to be the violation probability of $x$ under probability measure $\mathbb P$ that generates $\xi$. Obviously, $x$ is feasible for \eqref{main_problem} if and only if
\begin{equation}
V(x,\mathbb P) \leq \epsilon.
\end{equation}
We would like to obtain a solution, say $\hat x$, from the data such that
\begin{equation}
\mathbb P_{data}(V(\hat x,\mathbb P) \leq \epsilon) \geq 1-\alpha,\label{confidence guarantee}
\end{equation}
where $\mathbb P_{data}$ is the distribution that generates the i.i.d. data $\xi_i,i=1,\ldots,n$ (each sampled from $\mathbb P$), and $1-\alpha$ is a given confidence level (e.g., $\alpha=5\%$). In other words, we want $\hat x$ to satisfy the chance constraint in \eqref{main_problem} with the prescribed confidence.

Under the convexity of $\mathcal X_\xi$ and mild additional assumptions (namely, that every instance of \eqref{SG_problem} has a feasible region with nonempty interior and a unique optimal solution), the seminal work \cite{campi2008exact} provides a tight estimate on the required data size $n$ to guarantee \eqref{confidence guarantee}. They show that a solution $\hat x$ obtained by solving \eqref{SG_problem} satisfies
\begin{equation}
\mathbb P_{data}(V(\hat x,\mathbb P)>\epsilon)\leq\sum_{i=0}^{d-1}\binom{n}{i}\epsilon^i(1-\epsilon)^{n-i},\label{CCP guarantee}
\end{equation}
with equality held for the class of ``fully-supported" optimization problems \cite{campi2008exact}. Thus, suppose we have a sample size $n$ large enough such that
\begin{equation}\label{gamma_function}
% \gamma(n,\epsilon,d)=
B(\epsilon,d,n)=\sum_{i=0}^{d-1}\binom{n}{i}\epsilon^i(1-\epsilon)^{n-i}\leq\alpha,
\end{equation}
then from \eqref{CCP guarantee} we have $\mathbb P_{data}(V(\hat x,\mathbb P)>\epsilon)\leq\alpha$ or \eqref{confidence guarantee}.
% would guarantee $\epsilon$-feasibility with confidence at least $1-\alpha$:
% \begin{equation}
% \mathbb P_{data}(V(\hat x,\mathbb P) \leq \epsilon) \geq 1-\alpha,
% \end{equation}
% as long as $\gamma(n,\epsilon,d) \leq \alpha$.

% As $n\to\infty$, it can be shown $\gamma(n,\epsilon,d)$ converges to 0 at an exponential rate which matches our expectations since as $n$ grows, samples $\xi_1,...,\xi_n$ would sufficiently populate the support of $\xi$ leading to $V(\hat x(\xi_1,...,\xi_n),\mathbb P) \leq \epsilon$ with confidence converging to 1. Thus, for a given confidence level $\alpha$, if the data size $n$ is large enough where $\gamma(n,\epsilon,d) \leq \alpha$, we can solve \eqref{SG_problem} and obtain a feasible solution for \eqref{main_problem} with at least $1-\alpha$ confidence.

% . In particular, $\mathbb P_{data}$ denotes the measure generating the data $\xi_i,i=1,\ldots,n$ ( hence also $\hat x$, obtained from random data) $\gamma(n,\epsilon,d)\leq\alpha$ for the theorem in \cite{campi2008exact} to hold
However, in small-sample situations in which the data size $n$ is not large enough to support \eqref{gamma_function}, the feasibility guarantee described above may not hold. It can be shown \cite{campi2008exact} that the minimum $n$ that achieves \eqref{confidence guarantee} is linear in $d$ and reciprocal in $\epsilon$, thus may impose challenges especially in high-dimensional and low-tolerance problems. Similar dependence on the key problem parameters also appears in other related methods such as \cite{de2004constraint}, which uses the Vapnik-Chervonenkis dimension to infer required sample sizes, the sampling-and-discarding approach in \cite{campi2011sampling}, and the closely related approach using sample average approximation in \cite{luedtke2008sample}. Several recent lines of techniques have been suggested to overcome these challenges and reduce sample size requirements, including the use of support rank and solution-dependent support constraints  \cite{schildbach2013randomized,campi2018wait}, regularization \cite{campi2013random}, and sequential approaches \cite{care2014fast,calafiore2011research,chamanbaz2016sequential, calafiore2017repetitive}.

In this paper, we offer a different path to alleviate the data size requirement than the above methods, when $\mathbb P$ possesses known parametric structures. Namely, we assume $\mathbb P \in \{\mathbb P_{\theta}\}_{\theta\in\Theta}$ for some parametric family of distribution, where $\mathbb P_\theta$ satisfies two basic requirements: It is estimatable, i.e., the unknown quantity or parameter $\theta$ can be estimated from data, and simulatable, i.e., given $\theta$, samples from $\mathbb P_{\theta}$ can be drawn using Monte Carlo methods. Under these presumptions, our approach turns the CCP \eqref{main_problem}, with an unknown parameter, into a CCP that has a definite parameter and a suitably re-adjusted tolerance level, which then allows us to generate enough Monte Carlo samples and consequently utilize the guarantee provided from \eqref{CCP guarantee}. On a high level, this approach replaces the data size requirement in using \eqref{SG_problem} (or, in fact, any of its variant methods) with a Monte Carlo size requirement, the latter potentially more available given cheap modern computational power. Our methodological contributions consist of the development of procedures, related statistical results on their sample size requirement translations, and also showing some key differences between parametric and nonparametric regimes.

Our approach starts with a distributionally robust optimization (DRO) to incorporate the data-driven parametric uncertainty. The latter is a framework for decision-making under modeling uncertainty on the underlying probability distributions in stochastic problems. It advocates the search for decisions over the worst case, among all distributions contained in a so-called uncertainty set or ambiguity set (e.g., \cite{wiesemann2014distributionally,delage2010distributionally,goh2010distributionally}). In CCP, this entails a worst-case chance constraint over this set (e.g., \cite{hanasusanto2015distributionally, zymler2013distributionally, hanasusanto2017ambiguous,li2016ambiguous,jiang2016data,zhang2016ambiguous,hu2013kullback,cheng2014distributionally,xie2018deterministic,chen2010cvar,chen2018data,ji2018data}). When the uncertainty set covers the true distribution with a high confidence (i.e., the set is a confidence region), then feasibility for the distributionally robust CCP converts into a confidence guarantee on the feasibility for the original CCP. We follow this viewpoint and utilize uncertainty sets in the form of a neighborhood ball surrounding a baseline distribution, where the ball size is measured by a statistical distance (e.g., \cite{ben2013robust,petersen2000minimax,hansen2008robustness,love2015phi,dupuis2016path,lam2017empirical,lam2016recovering,hu2013kullback,gotoh2018robust,duchi2016statistics,esfahani2015data,blanchet2016quantifying,blanchet2016robust,gao2016distributionally}). In the parametric case, a suitable choice of this distance (such as the $\phi$-divergence that we focus on) allows easy and meaningful calibration of the ball size from the data, so that the resulting DRO provides a provable feasibility conversion to the CCP.

% an observation that we utilize to provide meaningful guarantees for our developed procedure.

% Here, we remark the connection of our ``Monte Carlo-based SG" with the Data-driven CCP \cite{jiang2016data} and the robust Monte Carlo considered in \cite{hu2012robust,glasserman2014robust} using a similarly well-motivated change-of-measure type argument.

Our next step is to combine this DRO with Monte Carlo sampling and scenario approximation. The definition of DRO means that there are many possible candidate distributions that can govern the truth, whereas the statistical guarantee for SO assumes a specific distribution that generates the data or Monte Carlo samples. To resolve this discrepancy, we select a \emph{generating distribution} that draws the Monte Carlo samples, and develop a translation of the guarantee from a fixed distribution into one on the DRO.
% We highlight that in doing so, our approach translates the generality in applying SO to approximate unambiguous CCP to our DRO.
We highlight the benefits in using SO to handle this DRO, as opposed to other potential methods. While there exist many good results on tractable reformulations of DRO for chance constraints (e.g., \cite{hanasusanto2015distributionally, hanasusanto2017ambiguous,ji2018data,li2016ambiguous,xie2018deterministic}), the reformulation tightness typically relies on using moment-based uncertainty sets and particular forms of the safety condition. Compared to moments, divergence-based uncertainty sets can be calibrated with data to consistently shrink to the true distribution. Importantly, in the parametric case, the calibration of divergence-based sets is especially convenient, and achieves a tight convergence rate by using maximum likelihood theory that efficiently captures parametric information. Our condition for applying SO to this DRO is at the same level of generality as applying SO to an unambiguous CCP, which, as mentioned before, only requires the convexity of $\mathcal X_\xi$ and mild conditions.
% the fully-supported assumption when using the standard SO procedure.

To exploit the full capability of our approach, we investigate the optimal choice of the generating distribution in relation to the target DRO, in the sense of requiring the least Monte Carlo size. We show that, if there is no ambiguity on the distribution (i.e., a standard CCP), or when the uncertainty set of a DRO is constructed via a divergence ball in the nonparametric space, the best generating distribution is, in a certain sense, the true or the baseline distribution at the center of the ball. However, if there is parametric information, the optimal choice of the generating distribution can deviate from the baseline distribution in a divergence-based DRO. We derive these results by casting the problem of selecting a generating distribution into a hypothesis testing problem, which connects the sampling efficiency of the generating distribution with the power of the test and the Neyman-Pearson lemma \cite{lehmann2006testing}. The results on DRO in particular combine this Neyman-Pearson machinery with the established DRO reformulation of chance constraints in \cite{jiang2016data,hu2013kullback}, with the discrepancy between the best generating distribution and the baseline distribution in the parametric case stemming from the removal of the extremal distributions in the corresponding nonparametric uncertainty set. These connections among hypothesis testing, SO and DRO are, to our best knowledge, the first of its kind in the literature.

Finally, given the non-optimality of the baseline distribution of a divergence-based DRO in generating Monte Carlo samples, we further develop procedures to search over generating distributions that improve upon this baseline. On a high level, this can be achieved by increasing the sampling variability to incorporate the uncertainty of the distributional parameters (one may intuit this from the perspective of a posterior distribution in a Bayesian framework), which is implemented by utilizing suitable mixture distributions. We provide several classes of mixture distributions to attain such a variability enlargement, and study descent-type algorithms to search for good distributions in these classes. In the experiments, we show our methods can be combined with SO or other SO-based methods including FAST \cite{care2014fast} to solve
a variety of optimization problems and data distributions, some of which are not amenable
to RO, especially when the objective function is non-linear or the feasible sets are jointly chance-constrained. Furthermore, we also demonstrate how to search for more judicious choices of generating distributions that
can significantly reduce the required number of Monte Carlo samples.

We conclude this introduction by briefly discussing a few other lines of related literature. The first is the so-called robust Monte Carlo or robust simulation that, like us, also considers using Monte Carlo sampling together with DRO \cite{hu2012robust,glasserman2014robust,lam2016robust,huhong2015,lam2011sensitivity,ghosh2015computing}. However, this literature focuses on approximating DRO with stochasticity in the objective function, and does not study the chance constraint feasibility and SO that constitute our main focus. We also contrast our work with \cite{lam2016recovering} that also considers likelihood theory and utilizes simulation in tackling uncertain constraints. The study \cite{lam2016recovering} focuses on the nonparametric regime and uses the empirical likelihood to construct uncertainty sets. Unlike our work, there is no parametric information there that can be leveraged to overcome sample size requirements in SO. Moreover, the simulation used in \cite{lam2016recovering} is for calibrating the uncertainty set, instead of drawing sample constraints. Next, \cite{erdougan2006ambiguous} considers a scenario approach to distributionally robust CCP with an uncertainty set based on the Prohorov distance. Like \cite{de2004constraint}, \cite{erdougan2006ambiguous}  utilizes the  Vapnik-Chervonenkis dimension in studying feasibility, in contrast to the convexity-based argument in \cite{campi2008exact} that we utilize. More importantly, we aim to optimize the efficiency of Monte Carlo sampling in handling limited-data CCP, thus motivating us to study the choice of distance, calibration schemes, and selection of generating distributions that are different from \cite{erdougan2006ambiguous}. Finally, a preliminary conference version of this work has appeared in \cite{lam2018sampling}, which contains a basic introduction of our framework, without detailed investigation of the optimality of generating distributions, improvement strategies, and extensive numerical demonstrations.

To summarize, our main contributions of this paper are:
\begin{enumerate}
\item We propose a framework to obtain good feasible solutions in data-driven CCPs in small-sample situations, where the data size is insufficient to support the use of SO with valid statistical guarantees. Focusing on the parametric regime, our framework operates by setting up a DRO, with an uncertainty set constructed from parameter estimates using the data, that can in turn be tackled by using SO with Monte Carlo samples. In doing so, our framework effectively leverages the parametric information to convert the SO requirement on the data size into a requirement on the Monte Carlo size, the latter can be much more abundant given cheap modern computational power. The overview of this framework and the DRO construction are in Sections \ref{sec:DRO} and \ref{sec:set}.
\item We investigate and present the Monte Carlo size requirements needed to give statistically feasible solutions to the divergence-based DRO used in our framework. This relies on developing an implementable mechanism to connect the sample size requirement for SO, which attempts to solve a CCP with a fixed underlying distribution, to the sample size requirement needed to solve a DRO, by selecting a suitable generating distribution to draw the Monte Carlo samples. This contribution is presented in Section \ref{sec:sampling}.
% \item  that are used to approximate the DRO.
% \item function using a notion of ``bounding function"
\item We study the optimality of generating distributions, in a sense of minimizing the Monte Carlo effort that we will describe precisely. In particular, we show that the optimal generating distributions for an unambiguous CCP, and for a distributionally robust CCP with nonparametric divergence-based uncertainty sets, are simply their respective natural choices, namely the original underlying distribution and the baseline distribution (i.e., center of the divergence ball). In contrast, the optimal generating distribution for a distributionally robust CCP in the parametric case is more delicate, and the baseline distribution there can be readily dominated by other generating distributions. These results are derived by bridging the Neyman-Pearson lemma in statistical hypothesis testing with SO and DRO, which appears to be the first of its kind in the literature as far as we know. This contribution is presented in Section \ref{sec:stat}.
\item Motivated by the non-optimality of the baseline distribution, we propose several approaches to construct generating distributions that dominate the baseline distributions for parametric DRO, by using mixture schemes that, on a high level, enlarge the variability of the generating distributions. We show how to use descent-type search procedures to construct these distributions. This contribution is presented in Section \ref{gen}.
\end{enumerate}

Lastly, we also present in full detail our implementation algorithms in Section \ref{fastsec}, numerically demonstrate our approach and compare with other methods in Section \ref{sec:numerics}, and conclude in Section \ref{sec:conclusion}.

\section{From Data-Driven DRO to Scenario Optimization}\label{sec:outline}
This section introduces our overall framework. Recall our goal as to find a good feasible solution $\hat x$ for \eqref{main_problem}, and suppose that we have an i.i.d. data size $n$  possibly less than the requirement shown in \eqref{gamma_function}. As discussed in the introduction, we first formulate a DRO that incorporates the parametric estimation noise and subsequently allows us to resort to Monte Carlo sampling to obtain a feasible solution for \eqref{main_problem}. In the following, Section \ref{sec:DRO} first describes the basic guarantees from DRO. Section \ref{sec:sampling} investigates Monte Carlo sampling that provides guarantees on DRO. Section \ref{sec:set} discusses the choice of the uncertainty set.

\subsection{Overview of Data-Driven DRO}\label{sec:DRO}
% In this section, we give a brief introduction of the key points in our method. First recall that given problem \eqref{main_problem}, we are interested in finding a solution $\hat x$ such that \eqref{CCP guarantee} holds, in the case that $\mathbb P$ is observable only through data.  with $n < N_{exact}$ for a given confidence level $1-\alpha$ (i.e. insufficient sample). Then,

For concreteness, suppose the unknown true distribution $\mathbb P\in\mathcal P$, the class of possible probability distributions for $\xi$ (to be specified later). Given the observed data $\xi_1,...,\xi_n$, the basic steps in our data-driven DRO are:

\begin{itemize}
	\item Step 1: Find a data-driven uncertainty set $\mathcal U_{data}=\mathcal U_{data}(\xi_1,\ldots,\xi_n) \subseteq \mathcal P$ such that
	\begin{equation}\label{alpha}
	\mathbb{P}_{data}(\mathbb P \in \mathcal{U}_{data}) \geq 1-\alpha,
	\end{equation}
	where $\mathbb P_{data}$ denotes the measure generating the data $\xi_i,i=1,\ldots,n$.
	\item Step 2: Given $\mathcal U_{data}$, set up the distributionally robust CCP:
	\begin{equation}\label{DRO_problem}
	\begin{aligned}
	& \underset{x \in \mathcal{X} \subseteq \mathbb{R}^d}{\text{min}}
	& & c^Tx, \\
	& \text{\quad s.t.}
	& & \min_{\mathbb Q\in\mathcal U_{data}}\mathbb Q(x\in \mathcal{X}_{\xi})\geq1-\epsilon,
	\end{aligned}
	\end{equation}
	where the probability measure $\mathbb Q$ is the decision variable in the minimization in the constraint.
	
	\item Step 3: Find a solution $\hat x$ feasible for \eqref{DRO_problem}.
\end{itemize}

It is straightforward to see that $\hat x$ obtained from the above procedure is feasible for \eqref{main_problem} with confidence at least $1-\alpha$: If $\mathbb P\in\mathcal U_{data}$, then any $\hat x$ feasible for \eqref{DRO_problem} satisfies
\begin{equation*}
\mathbb P(\hat x\in\mathcal X_{\xi})\geq\min_{\mathbb Q\in\mathcal U_{data}}\mathbb Q(\hat x\in \mathcal{X}_{\xi})\geq 1-\epsilon
\end{equation*}
Thus
% \implies V(\hat x,\mathbb P) \leq \epsilon,
% \end{equation}
%  which, combined with \eqref{alpha}, further satisfies
 \begin{equation}
 \mathbb P_{data}(\mathbb P(\hat x\in\mathcal X_{\xi})\geq 1-\epsilon)\geq\mathbb P_{data}(\mathbb P\in\mathcal U_{data})\geq 1-\alpha,
 \end{equation}
which gives our conclusion.%  the desired result.

\subsection{Monte Carlo Sampling for DRO}\label{sec:sampling}
To use the above procedure, we need to provide a way to construct the depicted $\mathcal U_{data}$ and to find a (confidently) feasible solution for \eqref{DRO_problem}. We postpone the set construction to the next subsection and focus on finding a feasible solution here. We resort to SO, via Monte Carlo sampling, to handle \eqref{DRO_problem}. Note that, unlike in the standard SO discussed in the introduction, the distribution $\mathbb Q$ here can be any candidate within the set $\mathcal U_{data}$. Thus, let us select a generating distribution, called $\mathbb P_0$ (which can depend on the data), to generate Monte Carlo samples $\xi_i^{MC},i=1,\ldots,N$, and solve
% can be any of the p Monte Carlo After the introduction of basic framework in three steps, we are ready to introduce our Monte Carlo based Scenario Generation. The Monte Carlo based SG aims to address the challenge in Step 1, 2 and 3. In particular, for \eqref{DRO_problem}, we first consider solving a SG with Monte Carlo samples fetched from some \underset{\xi^{MC} \sim \mathbb P_0}
 \begin{equation}\label{Monte_SG}
 \begin{aligned}
 & \underset{x \in \mathcal{X} \subseteq \mathbb{R}^d}{\text{min}}
 & & c^Tx, \\
 & {\text{s.t.}}
 & & x\in \mathcal{X}_{\xi_i^{MC}},\ i=1,\ldots,N .
 \end{aligned}
 \end{equation}
For convenience, denote, for any $\epsilon,\beta>0$,
\begin{equation}\label{gamma_function1}
% \gamma(n,\epsilon,d)=
N_{exact}(\epsilon,\beta,d)=\min\left\{n:\sum_{i=0}^{d-1}\binom{n}{i}\epsilon^i(1-\epsilon)^{n-i}\leq\beta\right\}.
\end{equation}
From the result of \cite{campi2008exact} discussed in the introduction, using $N_{exact}(\epsilon,\beta,d)$ or more Monte Carlo samples from $\mathbb P_0$ in \eqref{Monte_SG} would give a solution $\hat x^{MC}$ that satisfies $V(\hat x^{MC},\mathbb P_0) \leq \epsilon$ with confidence level $1-\beta$. This is not exactly the distributionally robust feasibility statement for problem \eqref{DRO_problem}. To address this discrepancy, we consider, conditional on the data $\xi_1,\ldots,\xi_n$,
\begin{equation}\label{bv}
\begin{aligned}
& \underset{\mathbb Q\in\mathcal U_{data}}{\text{max}}
& & V(\hat x^{MC},\mathbb Q) \\
& {\text{\quad s.t.}}
& & V(\hat x^{MC},\mathbb P_0) \leq \delta.
\end{aligned}
\end{equation}
This optimization problem serves to translate a guarantee on the violation probability under $\mathbb P_0$ to any $\mathbb Q$ in $\mathcal U_{data}$. If we can bound the optimal value in \eqref{bv}, then we can trace back the level of $\delta$ that is required to ensure a chance constraint validity of tolerance level $\epsilon$. However, the event involved in defining $V(\hat x^{MC},\mathbb P_0)$ and $V(\hat x^{MC},\mathbb Q)$, namely $\{ \xi: \hat x^{MC}\notin \mathcal X_{\xi}\}$, can be challenging to handle in general. Thus, we relax \eqref{bv} to
% Notice the constraint does not involve $\mathbb Q$ as we have relaxed the problem by investigating, given a bound on the violation probability under $\mathbb P_0$, how to bound the violation probability under all of $\mathcal U_{data}$.
%   as it would help us translate the feasibility statement for $\mathbb P_0$ to a feasibility statement for all $\mathbb Q \in \mathcal U_{data}$. On the other hand, a deeper look into the definiton of $V(x,\mathbb P)=\mathbb P(x\notin \mathcal X_{\xi})$ suggests that the optimization problem \eqref{bv} involves measures $\mathbb P_0,\mathbb Q \in \mathcal U_{data}$ and sets $\{ \xi: x\notin \mathcal X_{\xi}\}$.
\begin{equation}\label{hypo_test}
\begin{aligned}
& \underset{\mathbb Q \in \mathcal U_{data}, A\subset \mathcal Y }{\text{max}}
& & \mathbb Q (A) \\
& {\text{\quad s.t.}}
& & \mathbb P_0(A) \leq \delta.
\end{aligned}
\end{equation}
 where the decision variables now include the set $A$ in addition to the probability measure $\mathbb Q$. Conditional on the data $\xi_1,\ldots,\xi_n$, the optimal value of optimization problem \eqref{hypo_test}, which we denote $M(\mathbb P_0,\mathcal U_{data},\delta)$, is clearly an upper bound for that of \eqref{bv}. In fact, it is also clear from \eqref{hypo_test} that $M(\mathbb P_0,\mathcal U_{data},\delta)$ is non-decreasing in $\delta>0$ and
\begin{equation}
\max\limits_{\mathbb Q\in\mathcal U_{data}}V(\hat x^{MC},\mathbb Q)\leq M(\mathbb P_0,\mathcal U_{data},V(\hat x^{MC},\mathbb P_0)),\label{basic_bound}
\end{equation}
by simply taking $A=\{\xi: \hat x^{MC} \notin \mathcal X_{\xi}  \}$ and $\delta=V(\hat x^{MC},\mathbb P_0)$ in \eqref{hypo_test}. We have the following guarantee:
\begin{theorem}\label{main_theorem}
Given $\mathbb P_0$, $\mathcal U_{data}$ and $\epsilon>0$, suppose there exists $\delta_{\epsilon}>0$ small enough such that
\begin{equation}
M(\mathbb P_0,\mathcal U_{data},\delta_\epsilon)\leq\epsilon.\label{interim beta}
\end{equation}
If we solve \eqref{Monte_SG} with $N_{exact} (\delta_{\epsilon},\beta,d)$ number of samples drawn from $\mathbb P_0$, then the obtained solution $\hat x^{MC}$ would be feasible for \eqref{DRO_problem} with confidence at least $1-\beta$. Furthermore, if
\begin{equation}\label{alpha_u}
\mathbb{P}_{data}(\mathbb P \in \mathcal{U}_{data}) \geq 1-\alpha,
\end{equation}
where $\mathbb P_{data}$ is the measure governing the real-data generation under the true distribution $\mathbb P $, then the obtained solution $\hat x^{MC}$ would be feasible for \eqref{main_problem} with confidence at least $1-\alpha-\beta$.
\end{theorem}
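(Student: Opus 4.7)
The plan is to chain together three pieces: the Campi--Garatti feasibility bound applied to the Monte Carlo problem under $\mathbb P_0$, the worst-case translation provided by $M(\mathbb P_0,\mathcal U_{data},\cdot)$, and finally a union bound to combine the Monte Carlo confidence with the data-driven coverage of $\mathcal U_{data}$.

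First I would work conditionally on the data $\xi_1,\ldots,\xi_n$, so that $\mathbb P_0$ and $\mathcal U_{data}$ are treated as fixed (they are data-dependent but measurable with respect to the data). With $N_{exact}(\delta_\epsilon,\beta,d)$ i.i.d.\ samples $\xi_1^{MC},\ldots,\xi_N^{MC}$ drawn from $\mathbb P_0$, the classical scenario result \eqref{CCP guarantee}, applied with tolerance $\delta_\epsilon$ and confidence parameter $\beta$, gives that the solution $\hat x^{MC}$ of \eqref{Monte_SG} satisfies
\begin{equation*}
V(\hat x^{MC},\mathbb P_0)\le\delta_\epsilon
\end{equation*}
with probability at least $1-\beta$ under the Monte Carlo measure (conditional on the data).

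Second, on the event where $V(\hat x^{MC},\mathbb P_0)\le\delta_\epsilon$, I would invoke the bound \eqref{basic_bound} together with the monotonicity of $M(\mathbb P_0,\mathcal U_{data},\cdot)$ in its third argument to conclude
\begin{equation*}
\max_{\mathbb Q\in\mathcal U_{data}} V(\hat x^{MC},\mathbb Q)\le M\bigl(\mathbb P_0,\mathcal U_{data},V(\hat x^{MC},\mathbb P_0)\bigr)\le M(\mathbb P_0,\mathcal U_{data},\delta_\epsilon)\le\epsilon,
\end{equation*}
which is exactly the distributionally robust feasibility condition in \eqref{DRO_problem}. This proves the first assertion: the Monte Carlo confidence for feasibility with respect to \eqref{DRO_problem} is at least $1-\beta$.

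For the second assertion I would take expectation (or, more operationally, apply a union bound) over the randomness of the data. Let $E_1=\{\mathbb P\in\mathcal U_{data}\}$ and $E_2=\{\hat x^{MC}\text{ feasible for \eqref{DRO_problem}}\}$. By \eqref{alpha_u}, $\mathbb P_{data}(E_1)\ge 1-\alpha$, and by the first part, $\mathbb P_{data}(E_2)\ge 1-\beta$ once the joint measure over data and Monte Carlo randomness is considered (using the tower property on the conditional bound above). On $E_1\cap E_2$, the definition of $\mathcal U_{data}$ and the DRO constraint together give $\mathbb P(\hat x^{MC}\in\mathcal X_\xi)\ge\min_{\mathbb Q\in\mathcal U_{data}}\mathbb Q(\hat x^{MC}\in\mathcal X_\xi)\ge 1-\epsilon$, so $\hat x^{MC}$ is feasible for \eqref{main_problem}. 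A union bound then yields the claimed confidence $1-\alpha-\beta$.

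The only delicate point is the second step, where $\mathbb P_0$ and $\mathcal U_{data}$ are themselves data-dependent, so the Campi--Garatti guarantee is applied conditionally and then combined with the data randomness via the tower property before the final union bound. Everything else is bookkeeping: monotonicity of $M$, the trivial upper bound \eqref{basic_bound}, and nested conditioning.
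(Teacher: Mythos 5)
Your proposal is correct and follows essentially the same route as the paper's proof: the Campi--Garatti bound under $\mathbb P_0$, the translation via the monotonicity of $M$ and the bound \eqref{basic_bound}, and a union bound over the events $\{\mathbb P\notin\mathcal U_{data}\}$ and $\{V(\hat x^{MC},\mathbb P_0)>\delta_\epsilon\}$. Your explicit attention to the conditioning on the data (since $\mathbb P_0$ and $\mathcal U_{data}$ are data-dependent) is a welcome clarification of a point the paper treats implicitly, but it is not a different argument.
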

%  defined in \eqref{nexactdef}
\begin{proof}
	By results in \cite{campi2008exact},  we know that by solving \eqref{Monte_SG} with $N_{exact} (\delta_{\epsilon},\beta,d)$ number of samples from $\mathbb P_0$, the obtained solution $\hat x^{MC}$ would satisfy
	\begin{equation}\label{cofi}
	\mathbb P_{MC,0}(V(\hat x^{MC},\mathbb P_0)>\delta_{\epsilon})\leq \beta
	\end{equation}
	where $\mathbb P_{MC,0}$ is the measure with respect to the Monte Carlo samples drawn from $\mathbb P_0$. Moreover, based on the monotonicity property of $M(\cdot)$ and \eqref{basic_bound}, we have
	\begin{equation}
	V(\hat x^{MC},\mathbb P_0)\leq\delta_{\epsilon} \implies \max\limits_{\mathbb Q\in\mathcal U_{data}}V(\hat x^{MC},\mathbb Q)\leq M(\mathbb P_0,\mathcal U_{data},\delta_\epsilon).
	\end{equation}
Thus \eqref{interim beta} implies that
$$\mathbb P_{data}\left(\max\limits_{\mathbb Q\in\mathcal U_{data}}V(\hat x^{MC},\mathbb Q)>\epsilon\right)\leq\mathbb P_{data}(V(\hat x^{MC},\mathbb P_0)>\delta_{\epsilon})\leq\beta$$
and hence $\hat x^{MC}$ is feasible for \eqref{DRO_problem} with confidence at least $1-\beta$. Furthermore, if $\mathbb P\in\mathcal U_{data}$, then a $\hat x^{MC}$ feasible for \eqref{DRO_problem} is also feasible for \eqref{main_problem} since $\max\limits_{\mathbb Q\in\mathcal U_{data}}V(\hat x^{MC},\mathbb Q) \geq V(\hat x^{MC},\mathbb P)$ and hence
	\begin{equation}
	\max\limits_{\mathbb Q\in\mathcal U_{data}}V(\hat x^{MC},\mathbb Q) \leq \epsilon \implies V(\hat x^{MC},\mathbb P) \leq \epsilon.
	\end{equation}
	Thus, if we denote  ${\Xi}=\{\xi_1,...,\xi_n,\xi^{MC}_{1},...,\xi^{MC}_{N}\}$ to be entire sequence consisting of real data and the generated Monte Carlo samples, it then follows that
	\begin{equation}
	\{\Xi:V(\hat x^{MC},\mathbb P )> \epsilon\} \subseteq \{\Xi: \mathbb P \notin \mathcal U_{data}\} \cup \{ \Xi: V(\hat x^{MC},\mathbb P_0)> \delta_\epsilon \}.
	\end{equation}
	It now follows by \eqref{alpha_u} and \eqref{cofi} that $\hat x^{MC}$ is feasible for \eqref{main_problem} with probability at least $1-\alpha-\beta$.
	\end{proof}

Theorem \ref{main_theorem} can be cast in terms of asymptotic instead of finite-sample guarantees by following the same line of arguments. We summarize it as the following corollary.
% we can obtain the following corollary.
\begin{coro}\label{main cor}
In Theorem \ref{main_theorem}, if the condition $\mathbb{P}_{data}(\mathbb P \in \mathcal{U}_{data}) \geq 1-\alpha$ is substituted by the asymptotic condition
\begin{equation}
  \liminf_{n\to\infty}\mathbb P_{data}(\mathbb P\in\mathcal U_{data})\geq1-\alpha,
\end{equation}
then the feasibility of $\hat x^{MC}$ in the last conclusion of Theorem \ref{main_theorem} holds with confidence asymptotically tending to at least $1-\alpha-\beta$.
\end{coro}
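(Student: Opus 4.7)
The plan is to repeat the union-bound argument from the proof of Theorem~\ref{main_theorem} verbatim, observing that the only quantity there that interacts with the data size $n$ in a nontrivial way is the confidence in $\mathbb P \in \mathcal{U}_{data}$; the Monte Carlo piece is insensitive to $n$. So the asymptotic hypothesis propagates through an $\limsup$ at the very end.

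Concretely, first I would note that the inclusion established in the proof of Theorem~\ref{main_theorem},
\begin{equation*}
\{\Xi : V(\hat x^{MC}, \mathbb P) > \epsilon\} \ \subseteq\ \{\Xi : \mathbb P \notin \mathcal{U}_{data}\} \ \cup\ \{\Xi : V(\hat x^{MC}, \mathbb P_0) > \delta_\epsilon\},
\end{equation*}
is purely set-theoretic and hinges only on (i) the translation bound \eqref{basic_bound} together with the monotonicity of $M$, and (ii) the implication $\max_{\mathbb Q \in \mathcal{U}_{data}} V(\hat x^{MC}, \mathbb Q) \leq \epsilon \Rightarrow V(\hat x^{MC}, \mathbb P) \leq \epsilon$ on the event $\{\mathbb P \in \mathcal{U}_{data}\}$. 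Neither piece uses the finite-sample form of the confidence bound on $\mathcal U_{data}$, so the inclusion remains valid for every $n$.

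Next, I would apply the union bound under $\mathbb P_{data}$ (the joint measure over the data and the independent Monte Carlo draws), obtaining
\begin{equation*}
\mathbb P_{data}(V(\hat x^{MC}, \mathbb P) > \epsilon) \ \leq\ \mathbb P_{data}(\mathbb P \notin \mathcal{U}_{data}) \ +\ \mathbb P_{data}(V(\hat x^{MC}, \mathbb P_0) > \delta_\epsilon).
\end{equation*}
The key observation to make here is that the Campi--Garatti guarantee \eqref{cofi} invoked in Theorem~\ref{main_theorem} depends only on $\delta_\epsilon$, $\beta$, $d$, and the fact that $N_{exact}(\delta_\epsilon, \beta, d)$ Monte Carlo samples are drawn from $\mathbb P_0$; it does \emph{not} depend on the real-data size $n$. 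Hence $\mathbb P_{data}(V(\hat x^{MC}, \mathbb P_0) > \delta_\epsilon) \leq \beta$ for every $n$.

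Taking $\limsup_{n \to \infty}$ on both sides and using $\limsup_n \mathbb P_{data}(\mathbb P \notin \mathcal U_{data}) = 1 - \liminf_n \mathbb P_{data}(\mathbb P \in \mathcal U_{data}) \leq \alpha$ under the hypothesis, I would conclude
\begin{equation*}
\limsup_{n \to \infty} \mathbb P_{data}(V(\hat x^{MC}, \mathbb P) > \epsilon) \ \leq\ \alpha + \beta,
\end{equation*}
which is exactly the asymptotic feasibility statement claimed. The main (and only) subtlety to emphasize in the write-up is the uniformity of the Campi--Garatti bound in $n$; beyond that, the argument is essentially a mechanical transcription of the finite-sample proof with $\limsup$ replacing the direct upper bound, and I do not anticipate any real technical obstacle.
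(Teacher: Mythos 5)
Your argument is correct and is precisely what the paper intends: it states that the corollary follows "by following the same line of arguments" as Theorem \ref{main_theorem}, and your write-up simply makes that explicit — the set inclusion and the Monte Carlo bound \eqref{cofi} are uniform in $n$, so only the $\mathbb P_{data}(\mathbb P\notin\mathcal U_{data})$ term requires the $\limsup$. No gaps.
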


 To summarize, in the presence of data insufficiency, if we choose $\mathcal U_{data}$ to satisfy the confidence property \eqref{alpha}, and are able to evaluate the bounding function $M(\mathbb P_0, \mathcal U_{data},\delta)$ that translates the violation probability under $\mathbb P_0$ to a worst-case violation probability over $\mathcal U_{data}$, then we can run SO with $N_{exact}(\delta_{\epsilon},\beta,d)$ Monte Carlo samples from $\mathbb P_0$ to obtain a solution for \eqref{main_problem} with confidence $1-\alpha-\beta$.

 We also note that the above scheme still holds if the $N_{exact}(\epsilon,\beta,d)$ in \eqref{gamma_function1} is replaced by the sample size requirements of other variants of SO (e.g., FAST \cite{care2014fast}) that are potentially smaller. This works as long as we stay with the same SO-based procedure in using the Monte Carlo samples. For clarity, throughout most of our exposition we will focus on the sample size requirement depicted in \eqref{gamma_function1}, but we will discuss other variants in our implementation and numerical sections.

 Finally, let us take a step back and justify why we use SO to tackle \eqref{DRO_problem}, as opposed to other potential means. Indeed, as pointed out in the introduction, there exist many good results on tractable reformulations of DRO. As will be discussed in detail in the next subsection, in the present context we will choose an uncertainty set that can leverage parametric information efficiently. Sets based on the neighborhoods of distributions measured by $\phi$-divergences are particularly attractive choices, as they can be calibrated easily (both the ball center and the size) in a way that efficiently uses parametric information. The dependence on the parameter dimension in particular is reflected in the degree of freedom in the $\chi^2$-distribution used in the calibrating the ball size, which shrinks to zero at a canonical rate as the data size increases. Other sets, such as moment-based ones, though possibly amenable to tight tractable reformulations, do not enjoy these statistical properties in the parametric context. Thus, in view of tackling $\phi$-divergence-based DRO, SO appears to be a natural choice, and we have set up a framework to utilize it under conditions at the same level of generality as required for the unambiguous counterpart. Sections \ref{sec:stat} and \ref{gen} will study this framework in further depth and enhance its efficiency. We caution, however, that the conservativeness in our proposed uncertainty set (which affects the optimality of the obtained solution) relies on the dimensionality of the distributional parameters. Our approach is expected to work well when this dimension is moderate, but not in high-dimensional problems where other approaches could be better choices.

\subsection{Constructing Uncertainty Sets}\label{sec:set}

% Before we find a baseline measure $\mathbb{P}_0$, in this section we first address the problem of calibrating a uncertainty set $\mathcal U_{data}$.  For illustration, we first
In this section we discuss the construction of the uncertainty set $\mathcal U_{data}$, using the $\phi$-divergence approach \cite{ben2013robust}.
% , with further improvement building upon this discussion in Section \ref{sec:stat}.
We assume the true distribution $\mathbb P$ of $\xi$ lies in a parametric family. We denote the true parameter as $\theta_{true}$. To highlight the parametric dependence, we call the true distribution $\mathbb P_{\theta_{true}}\in\mathcal P_{para}=\{\mathbb{P}_{\theta}\}_{\theta \in \Theta\subset\mathbb R^D}$ indexed by $\theta$, where $D$ is the dimension of parameter space. Given data $\xi_1,\xi_2,...,\xi_n$, we want to construct an uncertainty set $\mathcal U_{data}$ satisfying
\begin{equation}\label{truegu}
\lim_{n\to\infty}\mathbb P_{data}(\mathbb P_{\theta_{true}}\in\mathcal U_{data})=1-\alpha
\end{equation}
so that Corollary \ref{main cor} applies. To do so, we first estimate $\theta_{true}$ from the data. There are various approaches to do so; here we apply the common maximum likelihood estimator (MLE) $\hat\theta_n$, and set $\mathcal U_{data}$ to be
\begin{equation}
\mathcal U_{data}=\left\{\mathbb Q\in\mathcal P_{para}:d_\phi(\mathbb{P}_{\hat\theta_n},\mathbb{Q})\leq\frac{\phi''(1)\chi^2_{1-\alpha,D}}{2n}\right\},\label{uncertainty choice}
\end{equation}
where $\chi^2_{1-\alpha,D}$ is the $1-\alpha$ quantile of $\chi^2_D$, the $\chi^2$-distribution with degree of freedom $D$, and $d_\phi(\cdot,\cdot)$ is the $\phi$-divergence between two probability measures, i.e., given a convex function $\phi:\mathbb R_+\to\mathbb R_+$, with $\phi(1)=0$, a distance between two probability measures $\mathbb P_1$ and $\mathbb P_2$  defined as
\begin{align}
d_\phi(\mathbb{P}_1, \mathbb{P}_2)=&\int_{\mathcal Y} \phi\left(\frac{d\mathbb{P}_2}{d\mathbb{P}_1}\right) \mathbb{P}_1(dy),\label{chidef1}
\end{align}
assuming $\mathbb P_2$ is absolutely continuous with respect to $\mathbb P_1$ with Radon-Nikodym derivative $\frac{d\mathbb P_2}{d\mathbb P_1}$ on $\mathcal Y$. Moreover, we assume that $\phi$ is twice continuously differentiable with $\phi''(1)\neq0$, and if necessary set the continuation of $\phi$ to $\mathbb R_-$ as $\phi(x)=+\infty$ for $x<0$. In \eqref{uncertainty choice}, we call the center of the divergence ball, $\mathbb P_{\hat\theta_n}$, the baseline distribution.

To guarantee desirable asymptotic properties of our uncertainty set, we make the following assumption:

\begin{assump}\label{con_and_nor}
	Let $\theta_{true}\in \Theta$ be the true parameter and let  $\hat{\theta}_{n}$ be the MLE of $\theta_{true}$ estimated from $n$ i.i.d. data points. Then, as $n \rightarrow \infty$,  $\hat\theta_n$ satisfies consistency and asymptotic normality condition:
	\begin{equation}
	\hat{\theta}_{n} \xrightarrow{\mathbb P} \theta_{true} \quad\text{and} \quad \sqrt{n}(\hat{\theta}_{n}-\theta_{true}) \xrightarrow{\mathcal D} \mathcal{N}(0,\mathcal{I}^{-1}(\theta_{true})),
	\end{equation}
	where $\mathcal I (\theta) $ is the Fisher information for the parametric family $\mathcal P_{para}$ with well-defined inverse that is continuous in the domain $\theta\in\Theta$.
\end{assump}
Assumption \ref{con_and_nor} of MLE estimator is known to hold under various regularity conditions \cite{van2000asymptotic,lehmann2004elements}. We list a set of such conditions in Appendix \ref{sec:conditions}.

Under Assumption \ref{con_and_nor}, it can be shown \cite{pardo2005statistical,van2000asymptotic} that $\mathcal{U}_{data}$ in \eqref{uncertainty choice} satisfies the confidence guarantee \eqref{truegu}. Furthermore, since we can identify each $\mathbb P_\theta$ in $\mathcal P_{data}$ with $\theta$, we can equivalently view $\mathcal U_{data}$ as a subset of $\theta\in\Theta$, and write it as
\begin{equation}
{\mathcal{U}}_{data}\triangleq\left\{\theta \in \Theta: d_\phi(\mathbb P_{\hat\theta}, \mathbb P_\theta)\leq \frac{\phi''(1)\chi^2_{1-\alpha,D}}{2n}\right\}.\label{parametric uncertainty}
\end{equation}
For convenience, we shall use the two definitions of $\mathcal U_{data}$ interchangeably depending on the context. It is also known that the asymptotic confidence properties of \eqref{uncertainty choice} or \eqref{parametric uncertainty} are the same among different choices within the $\phi$-divergence class. These can be seen via a second order expansion of the $\phi$-divergences. Moreover, they are asymptotically equivalent to
% To express these observations more precisely, we redefine
% \begin{equation}
% {\mathcal{U}}_{data}\triangleq\left\{\theta \in \Theta: \chi^2(\mathbb P_{\hat\theta}, \mathbb P_\theta)\leq \frac{\chi^2_{1-\alpha,D}}{n}\right\},\label{parametric uncertainty}
% \end{equation}
%  However, from the second order expansion of $\phi$-divergence, we can show that $\mathcal U_{data}$ in \eqref{parametric uncertainty}, as well as all uncertainty sets $\mathcal U_{data}$  calibrated by $\phi$-divergence has an equivalent asymptotic form as:  with $\hat\theta_n$ being the MLE
\begin{equation}
\left\{\theta \in \Theta: (\theta-\hat{\theta}_{n})^T\mathcal I(\hat\theta_n)(\theta-\hat{\theta}_{n})\leq \frac{\chi^2_{1-\alpha,D}}{n}\right\},\label{parametric uncertainty1}
\end{equation}
where $\mathcal I(\hat\theta_n)$ is the estimated Fisher information, under the regularity conditions above \cite{pardo2005statistical,nielsen2014chi,van2000asymptotic}. In other words, under Assumption \ref{con_and_nor}, both \eqref{parametric uncertainty} and \eqref{parametric uncertainty1} satisfy
% Under the asymptotic form \eqref{parametric uncertainty1} as well as assumptions \ref{rexp},\ref{fnice} and \ref{con_and_nor}, it can be shown that  $\mathcal{U}_{data}$ satisfies the variant of \eqref{truegu}:
\begin{equation}
\lim_{n\to\infty}\mathbb P_{data}(\theta_{true}\in\mathcal U_{data})=1-\alpha.\label{confidence parameter}
\end{equation}
% as desired.

% \begin{remark}
	Note that the convergence rate of \eqref{truegu} or \eqref{confidence parameter} depends on the higher-order properties of the parametric model, which in turn can depend on the parameter dimension. Different from the sample size requirements in SO, this convergence rate is a consequence of MLE properties. Some details on finite-sample behaviors of MLE can be found in \cite{korostelev2011mathematical}.

    The $\mathcal U_{data}$ discussed above is a set over the parametric class of distributions (or parameter values). Considering tractability, DRO over nonparametric space could be easier to handle than parametric, which suggests a relaxation of the parametric constraint to estimate the bounding function $M$. This also raises the question of whether one can possibly contain $\mathcal U_{data}$ in a nonparametric ball with a shrunk radius and subsequently obtain a better $M$. These would be the main topics of Sections \ref{sec:stat} and \ref{gen}.
%     It is worth mentioning that even though we assumed the sample size $n$ is insufficient, the number of $n$ can still be large enough for the asymptotic approximation to take effect. Thus the consideration of asymptotic guarantee of uncertainty set is justified, especially considering the convenience  offered by the asymptotic construction. Moreover, the procedures following the asymptotic construction can be easily translated to finite sample bound on uncertainty sets \cite{korostelev2011mathematical}.
% \end{remark}

\section{Bounding Functions and Generating Distributions}\label{sec:stat}
Given the uncertainty set $\mathcal U_{data}$ in \eqref{parametric uncertainty}, we turn to the choice of the generating measure $\mathbb P_0$ and the bounding function $M(\mathbb P_0,\mathcal U_{data},\delta)$ which, as we recall, is the optimal value of optimization problem \eqref{hypo_test}. In the discussed parametric setup, the latter becomes
\begin{equation}\label{hypo_test1}
\begin{aligned}
& \underset{ \theta \in \mathcal U_{data},  A\subset \mathcal Y }{\text{max}}
& & \mathbb P_{\theta} (A) \\
& {\text{\quad s.t.}}
& & \mathbb P_0(A) \leq \delta.
\end{aligned}
\end{equation}
From Theorem \ref{main_theorem} and the fact that $M(\mathbb P_0,\mathcal U_{data},\delta)$ is non-decreasing in $\delta$, we want to choose $\mathbb P_0$ that minimizes $M(\mathbb P_0,\mathcal U_{data}, \delta)$ so that we can take the maximum $\delta_\epsilon$ and subsequently achieve overall confident feasibility with the least Monte Carlo sample size.  Note that $M(\mathbb P_0,\mathcal U_{data}, \delta)$ is a multi-input function depending on both $\mathbb P_0$ and $\delta$, and so a priori it is not clear that a uniform minimizer $\mathbb P_0$ can exist across all values of $\delta$ so that the described task is well-defined. It turns out that this is possible in some cases, which we shall investigate in detail. In the following, we discuss results along this line at three levels: The unambiguous case, namely when $\mathcal U_{data}$ in \eqref{hypo_test1} is a singleton (Section \ref{step1}), the case where $\mathcal U_{data}$ is nonparametric (Section \ref{sec:nonparametric}), and the case where $\mathcal U_{data}$ is parametric (Section \ref{sec:parametric}). The first two cases pave the way to the last one, which is  most important to our development and also motivates Section \ref{gen}. With these results in hand, we also discuss the possibility of using other statistical distances in our framework in Section \ref{diss}.
% by first setting up a connection between minimizing \eqref{hypo_test1} and hypothesis testing.
% This is because The tighter the bound $M(\cdot)$ is, the larger is the value of $\delta_{\epsilon}$ in \eqref{interim beta} and less number of Monte Carlo sample we need. The question now is how do we find such $\mathbb P_0$.

\subsection{Neyman-Pearson Connections and A Least Powerful Null Hypothesis}\label{step1}
% Thus, to understand and find an appropriate $\mathbb P_0$ for the optimization problem \eqref{hypo_test1},
We first consider, for a given $\theta_1 \in \mathcal U_{data}$, the optimization problem
\begin{equation}\label{hypo_test2}
\begin{aligned}
& \underset{ A\subset \mathcal Y }{\text{max}}
& & \mathbb P_{\theta_1} (A) \\
& {\text{s.t.}}
& & \mathbb P_0(A) \leq \delta.
\end{aligned}
\end{equation}
This problem can be viewed as choosing a most powerful decision rule in a statistical hypothesis test. More precisely, one can think of $A$ as a rejection region for a simple test with null hypothesis $\mathbb P_0$ and alternate hypothesis $\mathbb P_{\theta_1}$. Subject to a tolerance of $\delta$ Type-I error, optimization problem \eqref{hypo_test2} looks for a decision rule that maximizes the power of the test. By the Neyman-Pearson lemma \cite{lehmann2006testing}, under mild regularity conditions on the parametric family, the optimal set $ A^\star_{0,\theta_1,\delta}$ of \eqref{hypo_test2} takes the form
\begin{equation}\label{np_form}
 A^\star_{0,\theta_1,\delta}=\{ \xi\in \mathcal Y: \frac{d \mathbb P_{\theta_1}}{d\mathbb P_{0}} (\xi) > K_{0,\theta_1,\delta}^\star\},
\end{equation}
with $K_{0,\theta_1,\delta}^\star$ chosen so that $\mathbb P_0 ( A^\star_{0,\theta_1,\delta})=\delta$. Also, then, the optimal value of \eqref{hypo_test2} is $\mathbb P_{\theta_1}( A^\star_{0,\theta_1,\delta})$. Generalizing the above analysis to all $\theta\in\mathcal U_{data}$, we conclude that
\begin{equation}\label{hypo_bound}
M(\mathbb P_0,\mathcal U_{data},\delta)=\sup_{\theta\in \mathcal U_{data}} \mathbb P_{\theta} ( A^\star_{0,\theta,\delta}),
\end{equation}
is the optimal value of \eqref{hypo_test1}. These observations will be useful for deriving our subsequent results.
% Given these observations, how should we choose the baseline measure $\mathbb P_0$?

Our goal is to choose $\mathbb P_0$ to minimize \eqref{hypo_bound}. To start our analysis, let us first consider the extreme case where the uncertainty set $\mathcal U_{data}$ consists of only one point $\mathbb Q$. In this case, we look for $\mathbb P_0$ that minimizes $M(\mathbb P_0, \{\mathbb Q \}, \delta )$, the optimal value of
\begin{equation}\label{hypo_test_one}
\begin{aligned}
& \underset{ A\subset\mathcal Y }{\text{max}}
& & \mathbb Q (A) \\
& {\text{s.t.}}
& & \mathbb P_0(A) \leq \delta.
\end{aligned}
\end{equation}

That is, for a given measure $\mathbb Q$, we seek for the maximum discrepancy between $\mathbb Q$ and $\mathbb P_0$ over all  $\mathbb P_0$-measure sets that have $\delta$ or less content. This is similar to minimizing the total variation distance between $\mathbb Q$ and $\mathbb P_0$, and hints that the optimal choice of $\mathbb P_0$ is $\mathbb Q$. The following theorem, utilizing the Neyman-Pearson lemma depicted above, confirms this intuition. We remark that the assumptions of the theorem can be relaxed by using more general versions of the lemma, but the presented version suffices for most purposes and also the subsequent examples we will give.

\begin{theorem}\label{inp}
    Given a measure $\mathbb Q $ with continuous density on $\mathcal X$, among all $\mathbb P_0$ such that $\frac{d\mathbb Q}{d\mathbb P_0}$ exists and is continuous and positive almost surely, the minimum $M (\mathbb P_0,\{\mathbb Q\}, \delta)$ is obtained by choosing $\mathbb P_0=\mathbb Q$, giving $M(\mathbb P_0,\{\mathbb Q\},\delta)=\delta$.
\end{theorem}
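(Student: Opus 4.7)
The plan is to split the claim into two parts: (i) verify directly that $\mathbb P_0=\mathbb Q$ achieves $M(\mathbb P_0,\{\mathbb Q\},\delta)=\delta$, and (ii) show that every admissible $\mathbb P_0$ (i.e.\ those with continuous positive Radon--Nikodym derivative $\frac{d\mathbb Q}{d\mathbb P_0}$) satisfies $M(\mathbb P_0,\{\mathbb Q\},\delta)\geq\delta$. Part (i) is immediate: when $\mathbb P_0=\mathbb Q$, \eqref{hypo_test_one} becomes $\max_A\mathbb Q(A)$ subject to $\mathbb Q(A)\leq\delta$, whose value is $\delta$ (e.g.\ attained by any level set of the density of $\mathbb Q$ of mass $\delta$, which exists by continuity of the density).

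For part (ii), I would invoke the Neyman--Pearson structure already discussed around \eqref{np_form}. Because $\frac{d\mathbb Q}{d\mathbb P_0}$ is continuous and positive $\mathbb P_0$-a.s., the distribution of the likelihood ratio under $\mathbb P_0$ is atomless, so there exists $K^\star\geq 0$ with $A^\star=\{\xi:\frac{d\mathbb Q}{d\mathbb P_0}(\xi)>K^\star\}$ satisfying $\mathbb P_0(A^\star)=\delta$ exactly, and this $A^\star$ is optimal for \eqref{hypo_test_one}; hence $M(\mathbb P_0,\{\mathbb Q\},\delta)=\mathbb Q(A^\star)$. The task therefore reduces to showing $\mathbb Q(A^\star)\geq\delta$.

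To obtain this inequality I would split on the value of $K^\star$. If $K^\star\geq 1$, then on $A^\star$ we have $\frac{d\mathbb Q}{d\mathbb P_0}>1$, so
\begin{equation*}
\mathbb Q(A^\star)=\int_{A^\star}\frac{d\mathbb Q}{d\mathbb P_0}\,d\mathbb P_0 \geq \mathbb P_0(A^\star)=\delta.
\end{equation*}
If $K^\star<1$, then on the complement $(A^\star)^c$ the ratio satisfies $\frac{d\mathbb Q}{d\mathbb P_0}\leq K^\star<1$, so $\mathbb Q((A^\star)^c)\leq K^\star\mathbb P_0((A^\star)^c)\leq\mathbb P_0((A^\star)^c)=1-\delta$, giving $\mathbb Q(A^\star)=1-\mathbb Q((A^\star)^c)\geq\delta$. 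Combining the two cases yields $M(\mathbb P_0,\{\mathbb Q\},\delta)\geq\delta$ for every admissible $\mathbb P_0$, and since equality is attained at $\mathbb P_0=\mathbb Q$, the minimum is $\delta$.

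The only technical obstacle I anticipate is justifying the existence of $K^\star$ with \emph{exact} equality $\mathbb P_0(A^\star)=\delta$, which is what allows the clean Neyman--Pearson conclusion $M=\mathbb Q(A^\star)$ rather than only an upper bound by a randomized test. This is precisely where the continuity (and a.s.\ positivity) hypothesis on $\frac{d\mathbb Q}{d\mathbb P_0}$ enters: it rules out atoms of the likelihood ratio distribution under $\mathbb P_0$, so the cumulative distribution function of $\frac{d\mathbb Q}{d\mathbb P_0}$ under $\mathbb P_0$ is continuous and the appropriate quantile $K^\star$ exists. Everything else is a direct consequence of Neyman--Pearson plus the elementary case analysis above.
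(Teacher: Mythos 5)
Your proof is correct, and for the key inequality $M(\mathbb P_0,\{\mathbb Q\},\delta)\geq\delta$ it takes a genuinely more elementary route than the paper. Both arguments start from the same Neyman--Pearson structure, $A^\star=\{\xi:\frac{d\mathbb Q}{d\mathbb P_0}(\xi)>K^\star\}$ with $\mathbb P_0(A^\star)=\delta$, so the problem reduces to showing $\mathbb Q(A^\star)-\mathbb P_0(A^\star)\geq 0$. The paper does this by introducing $g(K)=\int_{\{d\mathbb Q/d\mathbb P_0>K\}}(\frac{d\mathbb Q}{d\mathbb P_0}-1)\,d\mathbb P_0$, proving it is non-increasing for $K\geq1$ and non-decreasing for $K\leq1$, and then evaluating the limits $K\to0$ (dominated convergence, giving $0$) and $K\to\infty$ (Fatou, giving $\geq0$) to conclude $g\geq0$ everywhere. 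Your two-case argument ($K^\star\geq1$: integrand positive on $A^\star$; $K^\star<1$: pass to the complement where the ratio is $\leq K^\star<1$) establishes $g(K^\star)\geq0$ directly at the one point that matters, with no monotonicity lemma and no limit/convergence theorems --- a cleaner and shorter path to the same conclusion. One caveat you correctly flag is the existence of $K^\star$ with \emph{exact} level $\mathbb P_0(A^\star)=\delta$; note, though, that continuity of the map $\xi\mapsto\frac{d\mathbb Q}{d\mathbb P_0}(\xi)$ does not by itself make the distribution of the likelihood ratio under $\mathbb P_0$ atomless (it can be constant on a set of positive $\mathbb P_0$-measure, e.g.\ when $\mathbb Q=\mathbb P_0$), so strictly speaking a randomized or augmented rejection region may be needed; the paper's proof makes exactly the same implicit assumption and explicitly remarks that it can be relaxed via more general versions of the lemma, so you are at the same level of rigor as the original.
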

\begin{proof}
 Under the assumptions, by the Neyman-Pearson lemma, for a fixed measure $\mathbb P_0$, the set achieving the optimal value of \eqref{hypo_test_one} takes the form $ A^\star=\{ \xi\in\mathcal Y: \frac{d \mathbb Q}{d\mathbb P_0} (\xi) > K^\star\}$ for some $K^\star\geq 0$ with $\mathbb P_0 (    A^\star)=\delta$. It then follows that \begin{equation*}
M(\mathbb P_0, \{\mathbb Q \}, \delta )-\delta=\mathbb Q( A^\star)-\mathbb P_0 ( A^\star)=\int_{\frac{d \mathbb Q}{d\mathbb P_0} (\xi) > K^\star} (\frac{d \mathbb Q}{d\mathbb P_0}-1) d\mathbb P_0(\xi).
\end{equation*}
Under the absolute continuity assumption, we define
\begin{equation*}
g(K)=\int_{\frac{d \mathbb Q}{d\mathbb P_0} (\xi) > K} (\frac{d \mathbb Q}{d\mathbb P_0}-1) d\mathbb P_0(\xi),
\end{equation*}
which can be seen to be a non-increasing function for $K\geq 1$ and a non-decreasing function for $K\leq 1$. To see this, take $K_1\geq K_2$, and we have
\begin{equation*}
g(K_2)=g(K_1)+\int_{K_1 \geq \frac{d \mathbb Q}{d\mathbb P_0} (\xi) > K_2} (\frac{d \mathbb Q}{d\mathbb P_0}-1) d\mathbb P_0(\xi).
\end{equation*}
Thus, when $K_1\geq K_2 \geq 1$, we have $g(K_2) \geq g(K_1)$ because
\begin{equation*}
    \int_{K_1 \geq \frac{d \mathbb Q}{d\mathbb P_0} (\xi) > K_2} (\frac{d \mathbb Q}{d\mathbb P_0}-1) d\mathbb P_0(\xi) \geq (K_2-1) \mathbb P_0({K_1 \geq \frac{d \mathbb Q}{d\mathbb P_0} (\xi) > K_2}) \geq 0,
\end{equation*}
while when $1 \geq K_1\geq K_2$, we have $g(K_2) \leq g(K_1)$ because
\begin{equation*}
      \int_{K_1 \geq \frac{d \mathbb Q}{d\mathbb P_0} (\xi) > K_2} (\frac{d \mathbb Q}{d\mathbb P_0}-1) d\mathbb P_0(\xi) \leq (K_1-1) \mathbb P_0({K_1 \geq \frac{d \mathbb Q}{d\mathbb P_0} (\xi) > K_2}) \leq 0.
\end{equation*}

Then, to identify the minimum of $g(K)$, we either decrease $K$ from 1 to 0 which gives
\begin{equation}
    \liminf_{K\to0} g(K) = \int (\frac{d \mathbb Q}{d\mathbb P_0}-1) d\mathbb P_0(\xi)=0,\label{interim1single}
\end{equation}
by using the dominated convergence theorem (e.g., by considering the set $\{1>d\mathbb Q/d\mathbb P_0(\xi)>K\}$) or we increase $K$ from 1 to $\infty$ which gives
\begin{equation}
    \liminf_{K\to \infty} g(K) \geq0.\label{interim2single}
\end{equation}
by Fatou's lemma. Observations \eqref{interim1single} and \eqref{interim2single} suggest that $g(K) \geq 0$ for all $K \geq 0$ and imply that $g(K^\star) \geq 0 $. Thus, we must have $ M(\mathbb P_0, \{\mathbb Q \}, \delta ) \geq \delta$. Note that this holds for any $\mathbb P_0$. Now, since choosing $\mathbb P_0=\mathbb Q$ gives $M(\mathbb Q, \{\mathbb Q \}, \delta )=\delta$, an optimal choice of $\mathbb P_0$ is $\mathbb Q$.
% \end{equation}
% where the last equality follows from the definition of $M(\cdot)$ and is obtained by taking $\mathbb P_0=\mathbb Q$.
 \end{proof}

 Theorem \ref{inp} shows that under mild regularity conditions, in terms of choosing the generating distribution $\mathbb P_0$ and minimizing $M(\mathbb P_0,\{\mathbb Q \},\delta)$, we cannot do better than simply choosing $\mathbb Q$ itself. This means that if we had known the true distribution was $\mathbb Q$, and without additional knowledge of the event of interest, the safest choice (in the minimax sense) for sampling would be $\mathbb Q$, a quite intuitive result. In the language of hypothesis testing, given the simple alternate hypothesis $\mathbb Q$, the null hypothesis $\mathbb P_0$ that provides the least power for the test, i.e., makes it most difficult to distinguish between the two hypotheses, is $\mathbb Q$.

\subsection{Nonparametric DRO}\label{sec:nonparametric}
Building on the discussion in Section \ref{step1}, we now consider the choice of generating distribution $\mathbb P_0$ to minimize the bounding function obtained from \eqref{hypo_test1}. Before so, we first discuss the nonparametric case, where the analog of \eqref{hypo_test1} is in the form:
\begin{equation}\label{non_par_dro}
\begin{aligned}
& \underset{ d_\phi(\mathbb P_{\hat\theta},
	\mathbb Q) \leq \lambda, A\subset \mathcal Y}{\text{max}}
& & \mathbb Q (A) \\
& {\qquad\qquad\text{s.t.}}
& & \mathbb P_0(A) \leq \delta.
\end{aligned}
\end{equation}
for some ball radius $\lambda>0$, where the decision variables are $\mathbb Q$ in the space of all distributions absolutely continuous with respect to $\mathbb P_{\hat\theta}$, and $A$.

We show that the above setting can be effectively reduced to the unambiguous case, i.e., when $\mathbb Q$ lies in a singleton discussed in Section \ref{step1}. This comes from an established equivalence between a distributionally robust chance constraint and an unambiguous chance constraint evaluated by the center of the divergence ball, when the event $A$ is fixed \cite {hu2013kullback,jiang2016data}. In particular, suppose the stochasticity space is $\mathcal Y=\mathbb R^k$, and $\mathbb P_{\hat\theta}$ admits a density $p_{\hat\theta}$. Theorem 1 in \cite{jiang2016data} shows that for any $A$,
\begin{eqnarray}
\underset{d_\phi(\mathbb P_{\hat\theta},
	\mathbb Q) \leq \lambda}{\text{max}} \mathbb Q( A) \leq \epsilon \iff \mathbb P_{\hat\theta} ( A) \leq \epsilon',\label{equivalence DRO}
\end{eqnarray}
where $\epsilon'=\epsilon'(\epsilon,\lambda,\phi)>0$ can be explicitly determined by $\epsilon$, $\lambda$ and $\phi$ as
\begin{equation}
\epsilon'(\epsilon,\lambda,\phi)=\max\left\{1-\inf_{\substack{z>0,z+\pi z\leq\ell_\phi\\\underline m(\phi^*)\leq z_0+z\leq\overline m(\phi^*)}}\left\{\frac{\phi^*(z_0+z)-z_0-\epsilon z+\lambda}{\phi^*(z_0+z)-\phi^*(z_0)}\right\},0\right\}
\label{adjusted tolerance}
\end{equation}
with $\phi^*(t)=\sup_{x}\{tx-g(x)\}$ being the conjugate function of $\phi$ and $\underline m(\phi^*)=\sup\{m\in\mathbb R:\phi^*\text{\ is a finite constant on\ }(-\infty,m]\}$, $\overline m(\phi^*)=\inf\{m\in\mathbb R:\phi^*(m)=+\infty\}$, $\ell_\phi=\lim_{x\to+\infty}\phi(x)/x$, and $\pi=-\infty$ if $Leb\{[p_{\hat\theta}=0]\}=0$, $0$ if $Leb\{[p_{\hat\theta}=0]\}>0$ and $Leb\{[p_{\hat\theta}=0]\setminus A\}=0$, and $1$ otherwise, where $Leb\{\cdot\}$ is the Lebesgue measure on $\mathbb R^k$.

The above equivalence can be used to obtain the following result.

\begin{theorem}\label{inp2}
Suppose $\mathcal Y=\mathbb R^k$ and $\mathbb P_{\hat\theta}$ admits a density. Among all $\mathbb P_0$ such that $\frac{d\mathbb P_{\hat\theta}}{d\mathbb P_0}$ exists and is continuous, positive almost surely, an optimal choice of $\mathbb P_0$ that minimizes $M(\mathbb P_0, \{\mathbb Q: d_\phi(\mathbb P_{\hat\theta},\mathbb Q)\leq \lambda\},\delta)$, namely the optimal value of \eqref{non_par_dro}, is the center of the $\phi$-divergence ball $\mathbb P_{\hat\theta}$. Moreover, this gives $M(\mathbb P_0, \{\mathbb Q: d_\phi(\mathbb P_{\hat\theta},\mathbb Q)\leq \lambda\},\delta)={\epsilon'}^{-1}(\delta,\lambda,\phi)$, where ${\epsilon'}^{-1}(\cdot,\lambda,\phi)$ is the inverse of the function $\epsilon'=\epsilon'(\epsilon,\lambda,\phi)$ defined in \eqref{adjusted tolerance} with respect to $\epsilon$, given by
\begin{equation}
    {\epsilon'}^{-1}(x,\lambda,\phi)\triangleq \min\{\epsilon\geq 0: \epsilon'(\epsilon,\lambda,\phi)\geq x\}\label{inverse}
\end{equation}
\end{theorem}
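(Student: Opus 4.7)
The plan is to reduce the distributionally robust problem \eqref{non_par_dro} to the unambiguous setting handled by Theorem \ref{inp}, using the equivalence \eqref{equivalence DRO} from \cite{jiang2016data}. The idea is that, for any fixed set $A$, the inner DRO chance constraint is equivalent to an unambiguous chance constraint under $\mathbb P_{\hat\theta}$ with a re-calibrated tolerance $\epsilon'$; so the dependence on the worst-case $\mathbb Q$ can be absorbed into a monotone transformation of $\mathbb P_{\hat\theta}(A)$, at which point only $\mathbb P_{\hat\theta}$ is left to ``interact'' with $\mathbb P_0$.

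Concretely, I first fix $A$. Applying \eqref{equivalence DRO} and taking the infimum over all $\epsilon$ for which the distributionally robust inequality holds, together with definition \eqref{inverse}, yields the pointwise identity
\begin{equation*}
\max_{\mathbb Q:\,d_\phi(\mathbb P_{\hat\theta},\mathbb Q)\leq\lambda}\mathbb Q(A)={\epsilon'}^{-1}(\mathbb P_{\hat\theta}(A),\lambda,\phi).
\end{equation*}
Since ${\epsilon'}^{-1}(\cdot,\lambda,\phi)$ is non-decreasing in its first argument (inherited from $\epsilon'$ being non-decreasing in $\epsilon$), I can pull this transformation outside the outer maximization over $A\subset\mathcal Y$ with $\mathbb P_0(A)\leq\delta$, giving
\begin{equation*}
M\bigl(\mathbb P_0,\{\mathbb Q:d_\phi(\mathbb P_{\hat\theta},\mathbb Q)\leq\lambda\},\delta\bigr)={\epsilon'}^{-1}\Bigl(\sup_{A:\,\mathbb P_0(A)\leq\delta}\mathbb P_{\hat\theta}(A),\,\lambda,\,\phi\Bigr).
\end{equation*}
The inner supremum is exactly $M(\mathbb P_0,\{\mathbb P_{\hat\theta}\},\delta)$. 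By Theorem \ref{inp}, under the present regularity hypotheses this quantity is bounded below by $\delta$ for every admissible $\mathbb P_0$, with equality attained at $\mathbb P_0=\mathbb P_{\hat\theta}$. Monotonicity of ${\epsilon'}^{-1}$ then transfers this to $M\geq {\epsilon'}^{-1}(\delta,\lambda,\phi)$, with equality at the center $\mathbb P_{\hat\theta}$, which is the claimed result.

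The main obstacle is the rigorous justification of the two uses of monotonicity, and in particular the verification that $\epsilon'(\epsilon,\lambda,\phi)$ defined in \eqref{adjusted tolerance} is non-decreasing in $\epsilon$ so that the inverse \eqref{inverse} is well-defined and non-decreasing; this can be read off from the $-\epsilon z$ term inside the infimum in \eqref{adjusted tolerance} combined with the outer $1-\inf$ structure. A secondary bookkeeping point is to confirm that the regularity hypotheses of Theorem \ref{inp} (continuous density for $\mathbb P_{\hat\theta}$, and $d\mathbb P_{\hat\theta}/d\mathbb P_0$ existing, continuous and a.s. positive) coincide with those assumed here, so that the reduction in the last step is legitimate for exactly the class of $\mathbb P_0$ over which the minimization is taken.
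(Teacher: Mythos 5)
Your proposal is correct and follows essentially the same route as the paper's own proof: both reduce \eqref{non_par_dro} to the unambiguous problem $\sup_{A:\mathbb P_0(A)\leq\delta}\mathbb P_{\hat\theta}(A)$ via the equivalence \eqref{equivalence DRO} and the monotonicity of $\epsilon'$ (the paper phrases this as an epigraph reformulation in $\epsilon$, you as a pointwise identity composed with ${\epsilon'}^{-1}$, which is the same content), and then both invoke Theorem \ref{inp} to conclude that the inner value is minimized, with value $\delta$, at $\mathbb P_0=\mathbb P_{\hat\theta}$.
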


\begin{proof}
    From Theorem 1 in \cite{jiang2016data}, we know that, for any $ A \subset \mathcal Y$ and $0\leq\epsilon\leq1$, \eqref{equivalence DRO} holds.
%     \begin{eqnarray}
% \underset{d_\phi(\mathbb P_{\hat\theta},
% 	\mathbb Q) \leq \lambda}{\text{min}} \mathbb Q( A) \geq 1-\epsilon \iff \mathbb P_{\hat\theta} ( A) \geq 1-\epsilon^{+},\label{equivalence DRO1}
% \end{eqnarray}
% \begin{eqnarray}
% \underset{d_\phi(\mathbb P_{\hat\theta},
% 	\mathbb Q) \leq \lambda}{\text{max}} \mathbb Q( A) \leq \epsilon \iff \mathbb P_{\hat\theta} ( A) \leq \epsilon',\label{equivalence DRO}
% \end{eqnarray}
% where $\epsilon'=\epsilon'(\epsilon,\lambda,\phi)>0$ can be explicitly determined by $\epsilon$, $\lambda$ and $\phi$ as defined in \eqref{adjusted tolerance}.
% Moreover, note that for a fixed $\phi$-divergence, $\epsilon'$ is a non-decreasing function of $\epsilon$ and a non-increasing function of $\lambda$.
% By taking the complement of the arbitrary set $A$, we can rewrite \eqref{equivalence DRO1} as
% \begin{eqnarray}
% \underset{d_\phi(\mathbb P_{\hat\theta},
% 	\mathbb Q) \leq \lambda}{\text{max}} \mathbb Q( A) \leq \epsilon \iff \mathbb P_{\hat\theta} ( A) \leq \epsilon^{+},\label{equivalence DRO}
% \end{eqnarray}
% for any $ A \subset \mathcal Y$.
We can rewrite the optimal value of problem \eqref{non_par_dro} in the form:
\begin{equation}
    \begin{aligned}
 &\underset{ \epsilon \geq 0}{\text{min}}
& & \epsilon \\
&{\text{ s.t.}}
& & \underset{d_\phi(\mathbb P_{\hat\theta},
	\mathbb Q) \leq \lambda}{\text{max}}\mathbb Q(A) \leq \epsilon \text{ for all $A \subset \mathcal Y$ such that } \mathbb P_0(A) \leq \delta,
\end{aligned}
\end{equation}
which, according to \eqref{equivalence DRO}, has the same optimal value as
\begin{equation}\label{non-par-dro11}
\begin{aligned}
&\underset{ \epsilon \geq 0}{\text{min}}
& & \epsilon \\
&{\text{ s.t.}}
& & \mathbb P_{\hat\theta}(A) \leq \epsilon'  \text{ for all $A \subset \mathcal Y$ such that } \mathbb P_0(A) \leq \delta.
\end{aligned}
\end{equation}
Since, fixing $\phi$ and $\lambda$, $\epsilon'$ is a non-decreasing function of $\epsilon$, we see that minimizing $\epsilon$ is equivalent to minimizing $\epsilon'$.
% Let us define the inverse of $\epsilon'$ with respect to $\epsilon$ as
% \begin{equation}
%     {\epsilon'}^{-1}(x)\triangleq \min\{\epsilon\geq 0: \epsilon'(\epsilon,\lambda,\phi)\geq x\},
% \end{equation}
Denoting $\nu^\star$ as the optimal value of the optimization problem
\begin{equation}\label{non_par_dro1}
    \begin{aligned}
&\underset{ A\subset \mathcal Y}{\text{max}}
& & \mathbb P_{\hat\theta}(A)\\
&{\text{ s.t.}}
& & \mathbb P_0(A) \leq \delta,
\end{aligned}
\end{equation}
then the optimal value of \eqref{non-par-dro11} is ${\epsilon'}^{-1}(\nu^*,\lambda,\phi)$. Moreover, this is achievable by setting $\mathbb P_0=\mathbb P_{\hat\theta}$ that gives the optimal value $\nu^*=\delta$ to \eqref{non_par_dro1} by Theorem \ref{inp}.

% we notice that, subject to a difference of $\epsilon^{-}(\nu^\star)-\nu^\star$ in the objective function, \eqref{non_par_dro1} is in the form of \eqref{hypo_test_one} with $\mathbb Q$ replaced by $\mathbb P_{\hat\theta}$. Furthermore, since $\epsilon^{-}(\nu)$ is a increasing function of $\nu$, choosing a $\mathbb P_0$ which minimizes the optimal value $\epsilon^{-}(\nu^\star)$ of \eqref{non_par_dro} is equivalent as choosing a $\mathbb P_0$ which minimizes the optimal value $\nu^\star$ of $\eqref{non_par_dro1}$. Thus, according to Theorem \ref{inp}, the optimal choice of generating measure $\mathbb P_0$ to minimize the optimal value of \eqref{non_par_dro} is $\mathbb P_{\hat\theta}$.
\end{proof}

An implication of Theorem \ref{inp2} is that, by noting that a parametric divergence ball  lies inside a corresponding nonparametric ball, we can compute a bound for $M$ to obtain a required Monte Carlo size, drawn from the baseline $\mathbb P_{\hat\theta}$, to get a feasible solution for the distributionally robust CCP \eqref{DRO_problem} and subsequently the CCP \eqref{main_problem}. More precisely, recall the bounding function $M(\mathbb P_0,\mathcal U_{data},\delta)=M(\mathbb P_0,\{\mathbb Q: d_\phi(\mathbb P_{\hat\theta},
	\mathbb Q) \leq \lambda,\mathbb Q\in\mathcal P_{para}\},\delta)$ with $\lambda=\phi''(1)\chi^2_{1-\alpha,D}/(2n)$, given by \eqref{hypo_test1}, as the optimal value of
    \begin{equation}\label{par_dro}
\begin{aligned}
& \underset{ d_\phi(\mathbb P_{\hat\theta},
	\mathbb Q) \leq \lambda,\mathbb Q\in\mathcal P_{para}, A\subset \mathcal Y}{\text{max}}
& & \mathbb Q (A) \\
& {\text{\qquad \qquad
s.t.}}
& & \mathbb P_0(A) \leq \delta.
\end{aligned}
\end{equation}

We have:
\begin{coro}
Given a data size $n$, suppose $\mathcal Y=\mathbb R^k$ and $\mathbb P_{\hat\theta}$ admits a density, where $\hat\theta$ is the MLE under Assumption \ref{con_and_nor}. If we choose $\delta_\epsilon=\epsilon'(\epsilon,\phi''(1)\chi^2_{1-\alpha,D}/(2n),\phi)$ and draw $N_{exact}(\delta_\epsilon,\beta,d)$ Monte Carlo samples from the generating distribution $\mathbb P_{\hat\theta}$ to construct the sampled problem \eqref{Monte_SG}, then the obtained solution will be feasible for \eqref{main_problem} with asymptotic confidence level at least $1-\alpha-\beta$.\label{para nonpara}
\end{coro}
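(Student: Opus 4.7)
The plan is to apply Theorem~\ref{main_theorem} (or rather its asymptotic version, Corollary~\ref{main cor}) with $\mathbb P_0=\mathbb P_{\hat\theta}$, and to verify its hypothesis \eqref{interim beta} by chaining together the containment of the parametric uncertainty set inside its nonparametric analog with the identification of the bounding function provided by Theorem~\ref{inp2}.

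First, I would observe that, by definition, the parametric uncertainty set
\[
\mathcal U_{data}=\{\mathbb Q\in\mathcal P_{para}:d_\phi(\mathbb P_{\hat\theta},\mathbb Q)\leq \lambda\},\qquad \lambda=\tfrac{\phi''(1)\chi^2_{1-\alpha,D}}{2n},
\]
is a subset of the nonparametric divergence ball $\{\mathbb Q:d_\phi(\mathbb P_{\hat\theta},\mathbb Q)\leq\lambda\}$. Since problem \eqref{par_dro} differs from problem \eqref{non_par_dro} only by appending the constraint $\mathbb Q\in\mathcal P_{para}$, enlarging the feasible region can only increase the optimal value, so
\[
M(\mathbb P_0,\mathcal U_{data},\delta)\;\leq\;M\bigl(\mathbb P_0,\{\mathbb Q:d_\phi(\mathbb P_{\hat\theta},\mathbb Q)\leq\lambda\},\delta\bigr)
\]
for every $\mathbb P_0$ and $\delta$. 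Choosing $\mathbb P_0=\mathbb P_{\hat\theta}$ and invoking Theorem~\ref{inp2} (whose hypotheses are satisfied since $\mathbb P_{\hat\theta}$ admits a density and we take $\mathbb P_0=\mathbb P_{\hat\theta}$), the right-hand side equals ${\epsilon'}^{-1}(\delta,\lambda,\phi)$.

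Next, I would verify that the prescribed $\delta_\epsilon=\epsilon'(\epsilon,\lambda,\phi)$ satisfies \eqref{interim beta}. By the definition \eqref{inverse} of ${\epsilon'}^{-1}$ as a generalized inverse of the nondecreasing function $\epsilon\mapsto\epsilon'(\epsilon,\lambda,\phi)$, we get ${\epsilon'}^{-1}(\delta_\epsilon,\lambda,\phi)\leq\epsilon$, hence
\[
M(\mathbb P_{\hat\theta},\mathcal U_{data},\delta_\epsilon)\;\leq\;{\epsilon'}^{-1}(\delta_\epsilon,\lambda,\phi)\;\leq\;\epsilon.
\]
With this inequality in hand, drawing $N_{exact}(\delta_\epsilon,\beta,d)$ Monte Carlo samples from $\mathbb P_{\hat\theta}$ and solving \eqref{Monte_SG} yields, by Theorem~\ref{main_theorem}, a solution $\hat x^{MC}$ feasible for the distributionally robust CCP \eqref{DRO_problem} with confidence at least $1-\beta$.

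Finally, I would import the asymptotic confidence guarantee on the uncertainty set itself. Under Assumption~\ref{con_and_nor}, equation \eqref{truegu} (equivalently \eqref{confidence parameter}) gives $\liminf_{n\to\infty}\mathbb P_{data}(\mathbb P_{\theta_{true}}\in\mathcal U_{data})\geq 1-\alpha$. Corollary~\ref{main cor} then upgrades the feasibility of $\hat x^{MC}$ for \eqref{DRO_problem} to feasibility for the original CCP \eqref{main_problem} with asymptotic confidence at least $1-\alpha-\beta$, which is the desired conclusion. The only mildly subtle point in the argument is the monotonicity-plus-generalized-inverse step that converts $\delta_\epsilon=\epsilon'(\epsilon,\lambda,\phi)$ into the bound $M(\mathbb P_{\hat\theta},\mathcal U_{data},\delta_\epsilon)\leq\epsilon$; everything else is direct bookkeeping on top of Theorems~\ref{main_theorem} and~\ref{inp2}.
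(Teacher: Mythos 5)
Your proposal is correct and follows essentially the same route as the paper's own proof: containment of the parametric ball in the nonparametric one, monotonicity of $M$ under set inclusion, Theorem~\ref{inp2} to identify the bound as ${\epsilon'}^{-1}(\delta,\lambda,\phi)$, and then Corollary~\ref{main cor} to assemble the $1-\alpha-\beta$ guarantee. The extra care you take with the generalized-inverse step is a welcome elaboration of a point the paper states more tersely, but it is not a different argument.
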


\begin{proof}
Note that a parametric divergence ball  lies inside a corresponding nonparametric ball in the sense that
$$\{\mathbb Q:d_\phi(\mathbb P_{\hat\theta},\mathbb Q)\leq\lambda,\mathbb Q\in\mathcal P_{para}\}\subseteq\{\mathbb Q:d_\phi(\mathbb P_{\hat\theta},\mathbb Q)\leq\lambda\}$$
Thus, by the definition of $M$, we have
$$M(\mathbb P_0, \{\mathbb Q: d_\phi(\mathbb P_{\hat\theta},\mathbb Q)\leq \lambda,\mathbb Q\in\mathcal P_{para}\},\delta)\leq M(\mathbb P_0, \{\mathbb Q: d_\phi(\mathbb P_{\hat\theta},\mathbb Q)\leq \lambda\},\delta)$$
In particular,
$$M(\mathbb P_{\hat\theta}, \{\mathbb Q: d_\phi(\mathbb P_{\hat\theta},\mathbb Q)\leq \lambda,\mathbb Q\in\mathcal P_{para}\},\delta)\leq M(\mathbb P_{\hat\theta}, \{\mathbb Q: d_\phi(\mathbb P_{\hat\theta},\mathbb Q)\leq \lambda\},\delta)={\epsilon'}^{-1}(\delta,\lambda,\phi)$$
where the equality follows from Theorem \ref{inp2}. Thus, if we choose $\delta_\epsilon$ such that ${\epsilon'}^{-1}(\delta_\epsilon,\lambda,\phi)\leq\epsilon$, or $\delta_\epsilon=\epsilon'(\epsilon,\lambda,\phi)$, where $\lambda=\phi''(1)\chi^2_{1-\alpha,D}/(2n)$ as presented in \eqref{uncertainty choice}, and the generating distribution as $\mathbb P_{\hat\theta}$, then Corollary \ref{main cor} guarantees that running SO on $N_{exact}(\delta_\epsilon,\beta,d)$ Monte Carlo samples gives a feasible solution for \eqref{main_problem} with confidence asymptotically at least $1-\alpha-\beta$.
\end{proof}

Corollary \ref{para nonpara} thus provides an implementable procedure to handle \eqref{main_problem} through \eqref{DRO_problem}.

\subsection{Parametric DRO}\label{sec:parametric}
Next we discuss further the choice of generating distributions in parametric DRO beyond $\mathbb P_{\hat\theta}$. While the ball center $\mathbb P_{\hat\theta}$ is a valid choice, the equivalence relation \eqref{equivalence DRO} does not apply when the divergence ball is in a parametric class, and the optimal choice of the generating distribution may no longer be $\mathbb P_{\hat\theta}$, as shown in the next result.
\begin{theorem}
In terms of selecting a generating distribution $\mathbb P_0$ to minimize $M(\mathbb P_0, \{\mathbb Q: d_\phi(\mathbb P_{\hat\theta},\mathbb Q)\leq \lambda, \mathbb Q \in \mathcal P_{para}\},\delta)$, the optimal value of  \eqref{par_dro}, the choice $\mathbb P_{\hat\theta}$ can be strictly dominated by other distributions.\label{main parametric DRO}
\end{theorem}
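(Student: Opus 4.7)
The plan is to prove the statement by exhibiting a concrete counterexample. Conceptually, the argument behind Theorem \ref{inp2} worked because the equivalence \eqref{equivalence DRO} collapsed the nonparametric DRO constraint into an unambiguous constraint at $\mathbb P_{\hat\theta}$, after which Theorem \ref{inp} forced $\mathbb P_0 = \mathbb P_{\hat\theta}$ to be optimal. This equivalence breaks once $\mathbb Q$ is confined to $\mathcal P_{para}$, because the extremal worst-case distributions driving the nonparametric bound are typically singular and sit outside $\mathcal P_{para}$. The resulting slack in the DRO constraint is what a non-baseline $\mathbb P_0$ can exploit, and a single explicit instance suffices to establish the existence claim.

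I would work on the finite space $\mathcal Y = \{1,2,3,4,5\}$ with the three-element parametric family $\mathcal P_{para} = \{\mathbb P_L, \mathbb P_M, \mathbb P_R\}$ defined by
\begin{equation*}
\mathbb P_L = \tfrac{1}{10}(4,3,2,1,0),\qquad \mathbb P_M = \tfrac{1}{10}(2,2,2,2,2),\qquad \mathbb P_R = \tfrac{1}{10}(0,1,2,3,4),
\end{equation*}
with $\hat\theta$ corresponding to the middle member so $\mathbb P_{\hat\theta} = \mathbb P_M$, and with radius $\lambda$ chosen large enough that $\mathcal U_{data}$ in \eqref{par_dro} equals all of $\mathcal P_{para}$. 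Fix $\delta = 1/5$. Taking $\mathbb P_0 = \mathbb P_{\hat\theta}$, the only nonempty subsets $A$ feasible in \eqref{hypo_test1} are the singletons (each of $\mathbb P_M$-mass $1/5$), and enumerating the adversary's best response over $\theta \in \{L,M,R\}$ gives the worst case at $\mathbb P_L(\{1\}) = \mathbb P_R(\{5\}) = 2/5$, so $M(\mathbb P_{\hat\theta}, \mathcal U_{data}, \delta) = 2/5$.

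Next I would propose the non-parametric alternative $\mathbb P_0 = \tfrac{1}{20}(6,3,2,3,6)$, which reallocates mass from the interior of $\mathcal Y$ toward its endpoints. Under this $\mathbb P_0$ the feasible nonempty subsets become exactly $\{2\},\{3\},\{4\}$, and direct evaluation gives worst-case values $\mathbb P_L(\{2\}) = \mathbb P_R(\{4\}) = 3/10$ together with $\mathbb P_M(\{3\}) = 1/5$, so $M(\mathbb P_0, \mathcal U_{data}, \delta) = 3/10 < 2/5$. This strict inequality establishes the claimed domination.

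The main subtlety lies in designing the perturbation correctly. A naive attempt to simply shift $\mathbb P_0$-mass away from the endpoints of $\mathcal Y$ backfires, since a set $A$ of tiny $\mathbb P_0$-measure contained in a region where some boundary $\mathbb P_\theta$ concentrates can still carry large $\mathbb P_\theta$-measure via \eqref{hypo_test1} and inflate $M$. The right perturbation must simultaneously (i) \emph{raise} the $\mathbb P_0$-cost of the high-likelihood-ratio regions of every boundary $\mathbb P_\theta$, so that the budget $\mathbb P_0(A) \leq \delta$ binds against all of them and shrinks the admissible rejection sets, and (ii) preserve coverage of every support in $\mathcal U_{data}$ so that no boundary $\mathbb P_\theta$ gets a ``free'' singular set to exploit. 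The symmetric endpoint reallocation above achieves both goals, and anticipates the mixture-based improvement schemes developed in Section \ref{gen}.
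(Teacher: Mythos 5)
Your proof is correct: the claim is an existence statement, and your discrete instance verifies it exactly. I checked the arithmetic: under $\mathbb P_0=\mathbb P_M$ the feasible nonempty sets at $\delta=1/5$ are precisely the singletons, giving $M=\mathbb P_L(\{1\})=\mathbb P_R(\{5\})=2/5$, while under $\mathbb P_0=\tfrac{1}{20}(6,3,2,3,6)$ the only feasible nonempty sets are $\{2\},\{3\},\{4\}$, giving $M=3/10<2/5$. Your approach is the same in spirit as the paper's --- both prove the theorem by exhibiting a counterexample in which a more spread-out generating distribution beats the baseline --- but the instances differ in character. The paper works in the Gaussian location family on $\mathbb R$ with $\mathcal U_{data}=\{\mathbb P_\theta:-1\leq\theta\leq1\}$, derives the Neyman--Pearson rejection regions for $\mathbb P_0^1=\mathcal N(0,1)$, $\mathbb P_0^2=\mathcal N(0,2)$, and a two-point Gaussian mixture, and then relies on \emph{numerical} evaluation ($0.2595$ vs.\ $0.1160$ vs.\ $0.0995$ at $\delta=0.05$) to conclude strict domination; its example has the advantage of living in the same Gaussian setting used in Section \ref{gen} and of illustrating the variance-enlargement and mixture heuristics directly. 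Your five-point example buys a fully self-contained, hand-checkable proof with exact rational arithmetic and no appeal to numerical computation, which is arguably a cleaner way to establish the bare existence claim; like the paper, you only certify domination at a single $\delta$, which matches the strength of what the paper itself proves. One small presentational point: it would be worth stating explicitly that $\mathbb P_L,\mathbb P_R\ll\mathbb P_M$ (so the $\phi$-divergences to the ball center are finite and a finite $\lambda$ indeed captures all of $\mathcal P_{para}$), and that the improved $\mathbb P_0$ is permitted to lie outside $\mathcal P_{para}$ --- consistent with the paper, whose own $\mathbb P_0^2$ and $\mathbb P_0^3$ also leave the parametric family.
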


Intuitively, Theorem \ref{main parametric DRO} arises because the extreme distribution that achieves the equivalence relation \eqref{equivalence DRO} may not be in the considered parametric family. It implies more flexibility in choosing the generating measure $\mathbb P_0$, in the sense of requiring less Monte Carlo samples than using $\mathbb P_{\hat\theta}$.

% Although not the exact formulation, lehmann1948most,lehmann1952existence,
From the standpoint of hypothesis testing in Section \ref{step1}, the imposed minimax problem \eqref{par_dro} in searching for the best $\mathbb P_0$ can be viewed as finding a simple null hypothesis that is uniformly least powerful across the uncertainty set. This question is related and appears more general than finding the least favorable or powerful prior in testing against composite null hypothesis \cite{lehmann2006testing}. In the latter context, given a set $\Theta_1$, one aims to find
% The latter To be specific, if we consider the following hypothesis testing problem:
% \begin{equation}\label{least_favorable_prior}
% \begin{aligned}
% & \underset{\theta_1\in\Theta_1 }{\text{max}}
% & & \mathbb P_{\theta_1} (A) \\
% & \underset{\theta_0\in\Theta_0 }{\text{s.t.}}
% & & \mathbb P_0(A) \leq \delta,
% \end{aligned}
% \end{equation}
% with composite null (i.e., $|\Theta_0|>1$), then
% In other words, if we can choose from a family of measures
a distribution $\mu^\star(d\theta_0)$ such that $\Gamma(\mu^\star) \leq \Gamma(\mu)$ for all distributions $\mu(d\theta_0)$ on $\Theta_0$, where $\Gamma(\mu)$ is the optimal value of
\begin{equation}\label{least_favorable_prior_de}
\begin{aligned}
& \underset{\theta_1\in\Theta_1 }{\text{max}}
& & \mathbb P_{\theta_1} (A) \\
& \underset{ }{\text{s.t.}}
& &\int_{\Theta_0} \mathbb P_{\theta_0}(A) \mu(d\theta_0) \leq \delta.
\end{aligned}
\end{equation}
The distribution $\mu(d\theta_0)$ is interpreted as a prior on a composite null hypothesis parametrized by $\theta_0$, and $\mu^*(d\theta_0)$ is the least favorable prior. The difference between \eqref{least_favorable_prior_de} and our formulation \eqref{par_dro} lies in the restriction to measures of the form $\mathbb P_0=\int_{\Theta_0}\mathbb P_{\theta_0}\mu(d\theta_0)$ for the former, leading to a smaller search space than ours. This mixture-type $\mathbb P_0$ and the Bayesian connection will partly motivate our investigation in Section \ref{gen}.

% , then we want to pick a $\mathbb P_0$ with some $\mu^\star(d\theta_0)$ such that it has the least power, or with the minimum optimal value for \eqref{least_favorable_prior_de}. As we have seen, this shares a similar taste as \eqref{par_dro} and will motivate our discussion in section \ref{sec:mixture}.

To prove Theorem \ref{main parametric DRO}, we present a counter example and also some related discussion.

\begin{example}\label{exam1}
Consider the uncertainty set $\mathcal U_{data}=\{\mathbb P_\theta, : -1 \leq \theta \leq 1 \}$ within Gaussian location family on $\mathbb R$ with  $\mathbb P_\theta(dy)=\frac{1}{\sqrt{2\pi}}e^{-\frac{(y-\theta)^2}{2}}$. This can be thought of, e.g., as an uncertainty set based on the $\chi^2$-distance, the latter defined between two probability measures $\mathbb P_1$ and $\mathbb P_2$  as
\begin{align}
\chi^2(\mathbb{P}_1, \mathbb{P}_2)=&\int_{\mathcal Y} (\frac{d\mathbb{P}_2}{d\mathbb{P}_1}-1)^2 \mathbb{P}_1(dy).\label{chidef1}
\end{align}
Note that the $\chi^2$-distance is in the family of $\phi$-divergences, by choosing $\phi=(x-1)^2$.  We aim to find a generating distribution $\mathbb P_{0}$ to minimize $M(\mathbb P, \{\mathbb P_{\theta}:\theta\in\mathcal U_{data} \},\delta)$, the optimal value of
\begin{equation}\label{opt example}
\begin{aligned}
& \underset{\theta\in\mathcal U_{data}, A\subset \mathbb R}{\text{max}}
& & \mathbb P_{\theta} (A) \\
& {\text{\quad s.t.}}
& & \mathbb P_0(A) \leq \delta.
\end{aligned}
\end{equation}
We consider several symmetric distributions as $\mathbb P_0$ (symmetry is reasonably conjectured as a good property since an imbalanced shift might increase the power for the alternative hypothesis on one side and the worst case overall). We list these symmetric distributions in increasing variability:

\begin{equation}\label{hypo_test3}
\begin{aligned}
& \mathbb P_{0}^1(dy)=\frac{1}{\sqrt{2\pi}}e^{-\frac{y^2}{2}} \\
& \mathbb P_{0}^2(dy)=\frac{1}{\sqrt{2\pi\cdot 2}}e^{-\frac{y^2}{2\cdot 2}} \quad \\
&
\mathbb P_{0}^3(dy)=\frac{1}{2\sqrt{2\pi}}\bigg(e^{-\frac{(y-1)^2}{2}}+e^{-\frac{(y+1)^2}{2}} \bigg).
\end{aligned}
\end{equation}
 Given $0 \leq \theta\leq 1$, it can be shown by the Neyman-Pearson lemma that the rejection region $ A^\star$ (i.e. the set giving the optimal value of \eqref{opt example} for a given $\theta$) for $\mathbb P_0^1$ has the form $\{ y:y>c_1 \}$, for $\mathbb P_0^2$ the form $\{ y: \abs{y-2\theta} \leq c_2\}$ and for $\mathbb P_0^3$ the form $\{y: \frac{e^{\theta y}}{e^y+e^{-y}} > c_3 \}$, for some $c_1$, $c_2$ and $c_3$. Let $\delta=0.05$ be the tolerance level, it can be shown through numerical verification that
\begin{equation}\label{hypo_test4}
\begin{aligned}
& M(\mathbb P_{0}^1,\mathcal \{\mathbb P_{\theta}:\theta\in\mathcal U_{data} \},0.05)=0.2595 \\
& M(\mathbb P_{0}^2,\mathcal \{\mathbb P_{\theta}:\theta\in\mathcal U_{data} \},0.05)=0.1160\\
&M(\mathbb P_{0}^3,\mathcal \{\mathbb P_{\theta}:\theta\in\mathcal U_{data} \},0.05)=0.0995.
\end{aligned}
\end{equation}
Thus, the natural choice $\mathbb P_{\hat\theta}=\mathbb P_{0}^1$ based on relaxing to nonparametric DRO yields a bounding function $M(\cdot)$ that is outperformed by $\mathbb P_{0}^2$ or $\mathbb P_{0}^3$. Later in Section \ref{gen} we will see numerically how $\mathbb P_0^2$ and $\mathbb P_0^3$ can lead to a smaller sample size requirements.
\end{example}

Although Theorem \ref{main parametric DRO} reveals room to search for the best generating distribution, the involved optimization, or even just finding an improved distribution over $\mathbb P_{\hat\theta}$, appears to be nontrivial. In particular, the maximization problem in \eqref{par_dro} depends on the computation of $ A^\star$ for each alternative of $\theta\in\mathcal U_{data}$. Section \ref{gen} discusses some approaches to search for improvements. We conclude the current section with some discussion on the choice of statistical distances used in the uncertainty set.

% On the other hand, note that one may attempt to consider only set in the form $
% which may change in form for different distributions in the uncertainty set, drastically increasing the amount of computation.
% Thus, we try to make a relxation of the optimization \eqref{par_dro} by making the inner maximization problem more easily available with the help of $\chi^2$ distance.

% \leq&\mathbb{P}_{0}(\xi\in \mathcal{A})+\sup\limits_{\theta\in\mathcal{U}_{data}} \int_{\mathcal{A}} p(y; \theta)-{p_0(y)} dy \nonumber\\
% =&\mathbb{P}_{0}(\xi\in \mathcal{A})+ \sup\limits_{\theta\in\mathcal{U}_{data}} \int_{\mathcal{Y}} \mathbf{1}\{y \in \mathcal{A}\}  \Big(\frac{p(y; \theta)}{p_0(y)}-1\Big) {p_0(y)} dy \nonumber\\
% \leq & \mathbb{P}_{0}(\xi\in \mathcal{A})+\sup\limits_{\theta\in\mathcal{U}_{data}} \bigg(\int_{\mathcal{Y}}\mathbf{1}\{y \in \mathcal{A}\} {p_0(y)}dy \bigg)^{1/2} \bigg(\int_{\mathcal{Y}}\Big(\frac{p(y; \theta)}{p_0(y)}-1\Big)^2 {p_0(y)} dy\bigg)^{1/2}\nonumber\\
% \leq & \mathbb{P}_{0}(\xi\in \mathcal{A})+\mathbb{P}_{0}(\xi\in \mathcal{A})^{1/2}\cdot (\sup\limits_{\theta\in\mathcal{U}_{data}} \chi^2(\mathbb{P}_{0},\mathbb{P}_{\theta}))^{1/2}.
% \end{align}

\subsection{Choice of Statistical Distance}\label{diss}
% We previously mentioned that the $\chi^2$-distance is one of many statistical distances based on $\phi$-divergences \cite{vajda1972f}. However, the uncertainty set construction and baseline measure search using the $\chi^2$ distance possess advantages in our framework. We compare this with other alternate choices and provide some reasons for our selection.

% First, we note that
We have chosen to use $\phi$-divergence to construct our uncertainty set $\mathcal U_{data}$, and we have seen how this allows us to effectively translate sample size requirements from the data to Monte Carlo. Note that another common type of distance is the Wasserstein distance (e.g., \cite{esfahani2015data,blanchet2016quantifying,gao2016distributionally}). If one can translate the violation probability under a generating distribution into the worst-case violation probability over a Wasserstein ball, then the same line of arguments in Section \ref{sec:outline} applies to using SO on this DRO.
% However, there is some hint that Wasserstein can be too conservative for this application.
Presuming that the size of a parametric Wasserstein-based confidence region can be properly calibrated from data, it is conceivable that the above can give rise to an alternate solution route. It is known (Theorem 3 in \cite{blanchet2016quantifying}), under suitable regularity conditions, that one can equate a Wasserstein-ambiguous probability$\sup\limits_{d_W(\mathbb Q,\mathbb P_{\hat\theta})\leq\lambda} \mathbb{Q}(\xi \in {A})$, where $d_W$ denotes a Wasserstein distance of order 1 and cost function $c$, and $A$ is an event, to $\mathbb P_{\hat\theta}(c(\xi,A)\leq1/\nu^*)$ where $\nu^*\geq0$ is a dual multiplier for the associated optimization problem, and $c(\xi,A)$ denotes the cost-induced distance between a point $\xi$ and a set $A$. Thus, $M(\mathbb P_0,\{\mathbb Q:d_W(\mathbb P_{\hat\theta},\mathbb Q)\leq\lambda\},\delta)$ can be written as
\begin{equation}\label{non_par_dro_Wasserstein}
\begin{aligned}
& \underset{A\subset \mathcal Y}{\text{max}}
& & \mathbb P_{\hat\theta} (c(\xi,A)\leq1/\nu^*) \\
&{\text{ s.t.}}
& & \mathbb P_0(A) \leq \delta.
\end{aligned}
\end{equation}
Compared to the evaluation of $M(\mathbb P_0,\{\mathbb Q:d_\phi(\mathbb P_{\hat\theta},\mathbb Q)\leq\lambda\},\delta)$ in Theorem \ref{inp2}, the tightening of the tolerance level from $\epsilon$ to $\epsilon'$ is now replaced by the set inflation from $A$ to the $(1/\nu^*)$-neighborhood of $A$ given by $\{\xi:c(\xi,A)\leq1/\nu^*\}$. Note that, regardless of the distance used, one could reduce the conservativeness of our analysis by focusing on $A$ in the form $\{x\notin\mathcal X_\xi\}$, but this would require looking at the specific form of the safety set $\mathcal X_\xi$.

\section{Improving Generating Distributions}\label{gen}
This section discusses some approaches to search for better generating distributions beyond the baseline distribution in a divergence ball of DRO. Section \ref{sec:reduction} first states a general result to create better generating distributions. Section \ref{sec:mixture} then specializes to using a mixture distribution on $\theta$ to exploit this result. Sections \ref{sec:line search} and \ref{sec:mix var} then provide two specific ways to construct these mixtures. Finally, Section \ref{sec:numerics mixing} demonstrates some numerical comparisons in using these new mixing generating distributions and also simply using the baseline.
% We will focus on the use of $\chi^2$-distance as it gives us some computational benefits.

% In this section, we introduce several generalizations to improve its efficiency.
% We have mentioned that the required sample size $N_{exact}$ of our scheme depends on the choice of $\mathbb P_0$. In the previous section, we have seen that the choice dictated by the non-parametric DRO is not necessarily optimal one while the search of an optimal choice can be challenging. In this section, we propose a relaxation of the optimization problem \eqref{par_dro} using $\chi^2$ distance for a more tractable search of a better $\mathbb P_0$.

% Second, as we have mentioned in the introduction, several developed methods are dedicated to reduce the sample size $N_{exact}$ for the standard SG which can be readily transformed into our method. For demonstration, we introduce one particular method: the FAST method \cite{care2014fast} while the incorporation of other methods follows similarly.

\subsection{A Framework to Reduce Divergence Ball Size by Incorporating Parametric Information}\label{sec:reduction}
% We first point out that the $\epsilon^+$ in the reformulation \eqref{equivalence DRO} in \cite{jiang2016data} dictates the conservativeness level in using Monte Carlo sampling. The smaller the $\epsilon^+$, the more Monte Carlo samples one needs as indicated by \eqref{CCP guarantee} (or its variants), or in other words the smaller the bounding function $M$. This $\epsilon^+$ in generally depends on the choice of $\phi$, the original tolerance level $\epsilon$, and the divergence ball size $\lambda$. In particular, it decreases when $\lambda$ decreases. Our idea of searching for better generating distributions is the following: Given $\phi$ and $\epsilon$, we look for smaller $\lambda$ that satisfies the parametric uncertainty than is required in nonparametric DRO. To illustrate, let us focus on $\chi^2$-distance, in which case if $\epsilin < 1/2$, then \eqref{equivalence DRO} can be written as
% \begin{eqnarray}\label{jiangresult}
% \underset{\chi^2(\mathbb P_{\hat\theta},
% 	\mathbb Q) \leq \lambda}{\text{max}} \mathbb Q( A) \leq \epsilon \iff \mathbb P_{\hat\theta} ( A) \leq \max(0,\epsilon-\frac{\sqrt{\lambda^2+4\lambda(\epsilon-\epsilon^2)}-(1-2\epsilon)\lambda}{2\lambda+2}).
% \end{eqnarray}
The reason why the best choice of generating distribution $\mathbb P_0$ is not the baseline of the divergence ball, $\mathbb P_{\hat\theta}$, in minimizing $M(\mathbb P_0, \{\mathbb Q: d_\phi(\mathbb P_{\hat\theta},\mathbb Q)\leq \lambda, \mathbb Q \in \mathcal P_{para}\},\delta)$ is that the equivalence relation \eqref{equivalence DRO} does not hold when $\mathbb Q$ is restricted to a parametric class. In some sense the reduction to the unambiguous chance constraint in the right hand side of \eqref{equivalence DRO} is over-conservative as it does not account for parametric information. Suppose we would still like to use the analytically tractable relation \eqref{equivalence DRO}, but at the same time be less conservative. Then, one approach is to find a new baseline distribution, say $\tilde{\mathbb P}$, such that the parametrically restricted divergence ball $\{\mathbb Q: d_\phi(\mathbb P_{\hat\theta},\mathbb Q)\leq \lambda, \mathbb Q \in \mathcal P_{para}\}$ lies inside a new nonparametric divergence ball at the center $\tilde{\mathbb P}$, namely $\{\mathbb Q: d_\phi(\tilde{\mathbb P},\mathbb Q)\leq \tilde\lambda\}$. If we can obtain a nonparametric ball size $\tilde\lambda$ such that $\tilde\lambda<\lambda$ and the set inclusion holds, then this new ball is also a valid uncertainty set, and, when simply setting the generating distribution as $\mathbb P_0=\tilde{\mathbb P}$ and applying Theorem \ref{inp2}, we have a smaller upper bound for $M(\mathbb P_0, \{\mathbb Q: d_\phi(\mathbb P_{\hat\theta},\mathbb Q)\leq \lambda, \mathbb Q \in \mathcal P_{para}\},\delta)$ than ${\epsilon'}^{-1}(\delta,\lambda,\phi)$ obtained from using Theorem \ref{inp2} directly with the parametric constraint relaxed.
% (note that ${\epsilon'}^{-1}(\delta,\lambda,\phi)$ is non-decreasing in $\lambda$ for fixed $\delta$) .

To above mechanism can be executed as follows. Let $\mathcal U_{data}=\{\mathbb Q: d_\phi(\mathbb P_{\hat\theta},\mathbb Q)\leq \lambda, \mathbb Q \in \mathcal P_{para}\}$. For any $\mathbb P_0$, let
\begin{equation}
\mathcal{D}_{data}(\mathbb{P}_0,\phi)\triangleq   \sup\limits_{\mathbb Q\in\mathcal U_{data}} d_\phi(\mathbb{P}_0,\mathbb{Q}).\label{transform para to nonpara}
\end{equation}Then we clearly have
\begin{equation}
\mathcal U_{data}\subseteq\{\mathbb Q:d_\phi(\mathbb P_0,\mathbb Q)\leq\mathcal D_{data}(\mathbb P_0,\phi)\}\label{inclusion},
\end{equation}
since the right-hand-side set includes distributions outside of the parametric family as well.

Our goal is to find $\mathbb P_0$ to minimize $\mathcal{D}_{data}(\mathbb{P}_0,\phi)$ or any upper bound of $\mathcal{D}_{data}(\mathbb{P}_0,\phi)$ so that it is smaller than the ball size $\lambda$ appearing in the original parametric divergence ball $\mathcal U_{data}$. We state the implication of this as follows:

\begin{theorem}\label{inp3}
Suppose $\mathcal Y=\mathbb R^k$ and $\mathbb P_{\hat\theta}$ admits a density. Consider the parametric divergence ball $\mathcal U_{data}=\{\mathbb Q: d_\phi(\mathbb P_{\hat\theta},\mathbb Q)\leq \lambda, \mathbb Q \in \mathcal P_{para}\}$. Suppose we can find $\mathbb P_0$ such that $\mathcal{D}_{data}(\mathbb{P}_0,\phi)$ defined in \eqref{transform para to nonpara} satisfies $\mathcal{D}_{data}(\mathbb{P}_0,\phi)<\lambda$. Then we have
\begin{align}
\min_{\mathbb P_1}M(\mathbb P_1, \{\mathbb Q: d_\phi(\mathbb P_{\hat\theta},\mathbb Q)\leq \lambda, \mathbb Q \in \mathcal P_{para}\},\delta)&\leq\min_{\mathbb P_1}M(\mathbb P_1, \{\mathbb Q: d_\phi(\mathbb P_{\hat\theta},\mathbb Q)\leq\mathcal{D}_{data}(\mathbb{P}_0,\phi)\},\delta)\notag\\
&\leq\min_{\mathbb P_1}M(\mathbb P_1, \{\mathbb Q: d_\phi(\mathbb P_{\hat\theta},\mathbb Q)\leq\lambda\},\delta)\label{bounding relation}
\end{align}
and
\begin{equation}
\min_{\mathbb P_1}M(\mathbb P_1, \{\mathbb Q: d_\phi(\mathbb P_{\hat\theta},\mathbb Q)\leq \lambda, \mathbb Q \in \mathcal P_{para}\},\delta)\leq{\epsilon'}^{-1}(\delta,\mathcal{D}_{data}(\mathbb{P}_0,\phi),\phi)\leq{\epsilon'}^{-1}(\delta,\lambda,\phi)\label{bounding explicit}
\end{equation}
where ${\epsilon'}^{-1}(\epsilon,\lambda,\phi)$ is defined in \eqref{inverse}.
\end{theorem}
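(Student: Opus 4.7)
The plan is to derive both chains \eqref{bounding relation} and \eqref{bounding explicit} from three simple ingredients: the set inclusion \eqref{inclusion} that is already in hand, the trivial monotonicity of $M(\mathbb P_1,\cdot,\delta)$ with respect to set inclusion of its uncertainty-set argument, and the exact minimization formula for $M$ over generating distributions on nonparametric divergence balls delivered by Theorem \ref{inp2}. Reading the middle term of \eqref{bounding relation} in the natural way suggested by the discussion immediately preceding the theorem — that is, as the nonparametric divergence ball centered at the new baseline $\mathbb P_0$ with radius $\mathcal D_{data}(\mathbb P_0,\phi)$ — this should give a very short proof.

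For the first inequality in \eqref{bounding relation}, I would first record that $M(\mathbb P_1,\mathcal V,\delta)$ is non-decreasing in $\mathcal V$ under set inclusion, since enlarging the feasible region of the maximization \eqref{hypo_test} can only increase its optimal value. Combining this with \eqref{inclusion} yields, for every candidate $\mathbb P_1$,
\begin{equation*}
M(\mathbb P_1, \mathcal U_{data}, \delta)\;\leq\;M\bigl(\mathbb P_1,\,\{\mathbb Q:d_\phi(\mathbb P_0,\mathbb Q)\leq\mathcal D_{data}(\mathbb P_0,\phi)\},\,\delta\bigr),
\end{equation*}
and infimizing each side over $\mathbb P_1$ preserves the inequality.

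For the second inequality I would invoke Theorem \ref{inp2} twice. With center $\mathbb P_0$ and radius $\mathcal D_{data}(\mathbb P_0,\phi)$, it identifies the middle minimum as ${\epsilon'}^{-1}(\delta,\mathcal D_{data}(\mathbb P_0,\phi),\phi)$, attained at $\mathbb P_1=\mathbb P_0$; with center $\mathbb P_{\hat\theta}$ and radius $\lambda$, it identifies the right-hand minimum as ${\epsilon'}^{-1}(\delta,\lambda,\phi)$, attained at $\mathbb P_1=\mathbb P_{\hat\theta}$. It remains to show that ${\epsilon'}^{-1}(\delta,\cdot,\phi)$ is non-decreasing in its radius argument. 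Rather than unwinding \eqref{adjusted tolerance} by hand, I would argue abstractly: for any fixed center $c$ admitting a density, shrinking the radius of a $\phi$-divergence ball around $c$ can only shrink the ball and hence $M(c,\cdot,\delta)$; by Theorem \ref{inp2} this quantity equals ${\epsilon'}^{-1}(\delta,\cdot,\phi)$ evaluated at the corresponding radius, so the function is non-decreasing in the radius. Using $\mathcal D_{data}(\mathbb P_0,\phi)<\lambda$ then closes both chains simultaneously, giving \eqref{bounding relation} as well as the explicit form \eqref{bounding explicit}.

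The argument is essentially bookkeeping, so the real obstacle is not a computation but a regularity check: Theorem \ref{inp2} requires the center of each ball to admit a density, and the Radon-Nikodym derivative with respect to any competing generating distribution to be continuous and positive almost surely. In the concrete mixture-based constructions that Section \ref{gen} will subsequently build (both the line-search variants of Section \ref{sec:line search} and the variance-inflation variants of Section \ref{sec:mix var}), these regularity properties are not automatic and will need to be verified for the new baseline $\mathbb P_0$ alongside the substantive inequality $\mathcal D_{data}(\mathbb P_0,\phi)<\lambda$ that underlies the whole strategy.
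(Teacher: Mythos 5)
Your proof is correct, and for the second inequality of \eqref{bounding relation} it takes a genuinely different route from the paper. The first inequality is handled identically in both arguments: the inclusion \eqref{inclusion} plus the monotonicity of $M(\mathbb P_1,\cdot,\delta)$ under set inclusion, then minimizing over $\mathbb P_1$. (You also read the middle term as the ball centered at $\mathbb P_0$ rather than at $\mathbb P_{\hat\theta}$ as literally typeset; the paper's own proof does the same, so this is the intended reading.) For the second inequality, the paper asserts the further set inclusion $\{\mathbb Q:d_\phi(\mathbb P_0,\mathbb Q)\leq\mathcal D_{data}(\mathbb P_0,\phi)\}\subseteq\{\mathbb Q: d_\phi(\mathbb P_{\hat\theta},\mathbb Q)\leq \lambda\}$ and again appeals to monotonicity of $M$ in its set argument; since $\phi$-divergences do not satisfy a triangle inequality, that inclusion is not self-evident from $\mathcal D_{data}(\mathbb P_0,\phi)<\lambda$ alone. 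You instead evaluate both minima explicitly via Theorem \ref{inp2} (as ${\epsilon'}^{-1}(\delta,\mathcal D_{data}(\mathbb P_0,\phi),\phi)$ and ${\epsilon'}^{-1}(\delta,\lambda,\phi)$) and compare them through the monotonicity of ${\epsilon'}^{-1}(\delta,\cdot,\phi)$ in the radius, which you derive abstractly by shrinking a ball around a \emph{fixed} center. This sidesteps the questionable cross-center inclusion entirely and is arguably the cleaner argument, at the price of leaning on the fact that the value delivered by Theorem \ref{inp2} depends on the ball only through $(\delta,\text{radius},\phi)$ and not on the center, and on the regularity hypotheses of that theorem (center admitting a density, well-behaved Radon--Nikodym derivatives), which you rightly flag as needing verification for the mixture constructions of Section \ref{gen}. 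Both approaches deliver \eqref{bounding explicit} directly from Theorem \ref{inp2} once \eqref{bounding relation} is in place.
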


\begin{proof}
By the definition of $\mathcal{D}_{data}(\mathbb{P}_0,\phi)$, \eqref{inclusion} holds. Together with the condition $\mathcal{D}_{data}(\mathbb{P}_0,\phi)<\lambda$, we have the set inclusions
\begin{equation}
\{\mathbb Q: d_\phi(\mathbb P_{\hat\theta},\mathbb Q)\leq \lambda, \mathbb Q \in \mathcal P_{para}\}\subseteq\{\mathbb Q:d_\phi(\mathbb P_0,\mathbb Q)\leq\mathcal D_{data}(\mathbb P_0,\phi)\}\subseteq\{\mathbb Q: d_\phi(\mathbb P_{\hat\theta},\mathbb Q)\leq \lambda\}
\end{equation}
The inequalities \eqref{bounding relation} then follow from the definition of $M$. The inequalities \eqref{bounding explicit} in turn follow immediately from Theorem \ref{inp2}.
\end{proof}

Theorem \ref{inp3} stipulates that choosing $\mathbb P_0$ depicted in the theorem as the generating distribution, and setting ${\epsilon'}^{-1}(\delta,\mathcal{D}_{data}(\mathbb{P}_0,\phi),\phi)$ as an upper bound for $M(\mathbb P_0, \{\mathbb Q: d_\phi(\mathbb P_{\hat\theta},\mathbb Q)\leq \lambda, \mathbb Q \in \mathcal P_{para}\},\delta)$ to obtain the required Monte Carlo size $N_{exact} (\delta_{\epsilon},\beta,d)$ implied by Corollary \ref{main cor}, will give a lighter Monte Carlo requirement than using the bound ${\epsilon'}^{-1}(\delta,\lambda,\phi)$ directly obtained by relaxing the parametric constraint and using $\mathbb P_{\hat\theta}$ as the generating distribution as in Corollary \ref{para nonpara}.

\subsection{Mixture as Generating Distribution}\label{sec:mixture}
Since optimization \eqref{transform para to nonpara} can be difficult to solve generally, we focus on finding improved generating distribution $\mathbb P_0$ so that the implication of Theorem \ref{inp3} holds, instead of fully optimizing \eqref{transform para to nonpara}. In this and the next subsections, we design a search space $\mathcal P_{0}$ for $\mathbb P_0$ that allows the construction of tractable procedures to achieve such improvements, while at the same time ensures the obtained $\mathbb P_0$ are amenable to Monte Carlo simulation.

% To do so, we propose a set of probability distributions $\mathcal P_{prop}$ as the search space for $\mathbb P_0$ that satisfies two properties. First, it is a convex set and also contains the original baseline measure $\mathbb P_{\hat\theta}$ to facilitate efficient search (in a sense that we will discuss in detail). Second, any distribution in $\mathcal P_{prop}$ can  be simulated via Monte Carlo.

% Since we have assumed that we can efficiently obtain samples from the parametric family $\{\mathbb P_{\theta} \}_{\theta \in \Theta}$, the above sampling procedure is efficient as long as it is easy to sample $\theta$ under the measure $\mu$.
% A class of distributions that satisfy these two properties constitutes the mixtures of parametric distributions. To be specific, We start with our first search procedure.
From now on we will focus on $\chi^2$-distance as our choice of $\phi$ for convenience (as will be seen). Suppose that $\mathbb P_\theta$ has density $p(y;\theta)$. We then set $\mathcal P_{0}$ to be the collection of distributions with densities in the form
\begin{equation}\label{mixturedensity}
p_0(y)=\int_{\Theta} p(y;\theta)  \mu(d\theta),
\end{equation}
for some probability measure $\mu$ on $\Theta$.
% , where we expand the definition of $\Theta$ to be either the space of $\theta$ or other natural parameters when no confusion arises.
This class of distributions is easy to sample assuming $p(y;\theta)$ and $\mu$ are, as one can first sample $\theta\sim\mu(d\theta)$  and then $\xi \sim \mathbb{P}_{\theta}$ given $\theta$.
% Thus $p_0(y)$ satisfies the second desirable property discussed above.

Searching for the best $p_0(y)$ requires minimizing $\mathcal{D}_{data}(\mathbb{P}_0)$ over $\mathbb P_0\in\mathcal P_{0}$ (where for convenience we denote $\mathcal D_{data}(\mathbb P_0)$ as $\mathcal D_{data}(\mathbb P_0,\phi)$ with $\phi$ representing the $\chi^2$-distance). We first use \eqref{chidef1} to write
\begin{align}\label{equivalent}
\mathcal{D}_{data}(\mathbb{P}_0)=&\sup_{\theta\in\mathcal{U}_{data}}\int_{\mathcal{Y}}\Big(\frac{p(y; \theta)}{p_0(y)}-1\Big)^2 {p_0(y)} dy \nonumber\\
=& \sup_{\theta\in\mathcal{U}_{data}}\int_{\mathcal{Y}}\frac{(p(y; \theta))^2}{p_0(y)} dy -1 \nonumber\\
=&\sup_{\theta\in\mathcal{U}_{data}}\int_{\mathcal{Y}}\frac{(p(y; \theta))^2}{\int_{\Theta} p(y;\theta') \mu(d\theta')} dy -1 .
\end{align}
Denoting $\mathcal{P}(\Theta)$ as the space of probability measures on $\Theta$, we define the function $L: \mathcal{P}(\Theta) \times \Theta \rightarrow \mathbb R$ to be
\begin{equation}\label{def}
L(\mu,\theta)\triangleq\int_{\mathcal{Y}}\frac{(p(y; \theta))^2}{\int_{\Theta} p(y;\theta') \mu(d\theta')} dy,
\end{equation}
assuming the integral is well-defined for $\mathcal P(\Theta) \times \Theta$ and further define
\begin{equation}\label{def1}
l(\mu)\triangleq\sup\limits_{\theta\in\mathcal{U}_{data}}L(\mu,\theta).
\end{equation}
Thus \eqref{equivalent} can be written as $\mathcal{D}_{data}(\mathbb{P}_0)=l (\mu)-1$, and minimizing $\mathcal{D}_{data}(\mathbb{P}_0)$ is equivalent to solving
\begin{equation}\label{minimax}
\min_{\mu\in\mathcal{P}(\Theta)} l(\mu)=\min_{\mu\in\mathcal{P}(\Theta)}\max_{\theta\in\mathcal{U}_{data}}L(\mu,\theta).
\end{equation}
% the minimax optimization problem:
Optimization \eqref{minimax} has the following convexity property:
\begin{lemma}
	The outer minimization in problem \eqref{minimax} is convex. \label{convexity}
\end{lemma}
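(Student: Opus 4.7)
The plan is to show that $l(\mu)$ is a convex function on the convex set $\mathcal{P}(\Theta)$ by first establishing pointwise convexity of $L(\mu,\theta)$ in $\mu$ for each fixed $\theta\in\mathcal{U}_{data}$, and then invoking the standard fact that a pointwise supremum of convex functions is convex.

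First, I would observe that $\mathcal{P}(\Theta)$ is convex: for $\mu_1,\mu_2\in\mathcal{P}(\Theta)$ and $\lambda\in[0,1]$, the measure $\lambda\mu_1+(1-\lambda)\mu_2$ is again a probability measure on $\Theta$. Next, fix $\theta\in\mathcal{U}_{data}$ and $y\in\mathcal{Y}$, and consider the map $\mu\mapsto \int_{\Theta} p(y;\theta')\mu(d\theta')$. This map is linear in $\mu$, and the integral is strictly positive whenever the densities involved are positive. Since $x\mapsto 1/x$ is convex on $(0,\infty)$, the composition
\begin{equation*}
\mu \;\longmapsto\; \frac{1}{\int_{\Theta} p(y;\theta')\mu(d\theta')}
\end{equation*}
is convex in $\mu$. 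Multiplying by the nonnegative quantity $p(y;\theta)^2$, which does not depend on $\mu$, preserves convexity pointwise in $y$.

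Next, I would integrate in $y$ against Lebesgue measure to obtain $L(\mu,\theta)$. Since convex combinations commute with integration, for $\mu_\lambda=\lambda\mu_1+(1-\lambda)\mu_2$ one has pointwise in $y$
\begin{equation*}
\frac{p(y;\theta)^2}{\int p(y;\theta')\mu_\lambda(d\theta')}\;\leq\;\lambda\frac{p(y;\theta)^2}{\int p(y;\theta')\mu_1(d\theta')}+(1-\lambda)\frac{p(y;\theta)^2}{\int p(y;\theta')\mu_2(d\theta')},
\end{equation*}
and integrating in $y$ yields $L(\mu_\lambda,\theta)\leq \lambda L(\mu_1,\theta)+(1-\lambda)L(\mu_2,\theta)$, i.e.\ $L(\cdot,\theta)$ is convex on $\mathcal{P}(\Theta)$. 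Finally, since the pointwise supremum of a family of convex functions is convex, the function $l(\mu)=\sup_{\theta\in\mathcal{U}_{data}}L(\mu,\theta)$ is convex in $\mu$, and combined with the convexity of the feasible set $\mathcal{P}(\Theta)$ this shows that the outer minimization in \eqref{minimax} is a convex program.

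The only mildly delicate point is ensuring the integrals defining $L(\mu,\theta)$ are well-defined and finite on the relevant domain, but this is guaranteed by the standing assumption made right after \eqref{def}, so the argument is essentially a bookkeeping exercise combining linearity of expectation under $\mu$, convexity of $1/x$, and the sup-of-convex-is-convex principle; no deeper machinery is required.
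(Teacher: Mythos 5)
Your proposal is correct and follows essentially the same route as the paper's proof: both establish convexity of $L(\cdot,\theta)$ pointwise in $y$ via linearity of $\mu\mapsto\int_\Theta p(y;\theta')\mu(d\theta')$ and convexity of $x\mapsto 1/x$ on $(0,\infty)$, then integrate and take the supremum over $\theta\in\mathcal U_{data}$. No gaps.
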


Lemma \ref{convexity} can be proved by direct verification, which is shown in Appendix \ref{sec:proofs}. Note also that, if $\mu$ is the point mass $\delta_{\theta}$ for $\theta\in\Theta$, then the mixture distribution would recover the parametric distribution $\mathbb{P}_{\theta}$. Hence the proposed family $\mathcal P_{0}$ includes $\{\mathbb P_{\theta} \}_{\theta \in \Theta}$, and in particular the original baseline distribution $\mathbb P_{\hat\theta}$.
% Thus $\mathcal P_{para}$ also satisfies the first desirable property listed above.
Although the outer minimization of \eqref{minimax} is a convex problem, computing $l(\mu)$ involves a non-convex optimization and is difficult in general.
% \subsection{Danskin's Theorem and Descent Directions}
% Recognizing the convexity of problem \eqref{minimax} is a promising start. However, as it turned out, solving \eqref{minimax} exactly can still be difficult due to the fact that $l(\mu)$ is not straightforward to compute for a generic $\mu\in\mathcal P(\Theta)$. To overcome this difficulty and improve on $l(\delta_{\hat\theta})=\mathcal D_{data}(\mathbb P_0)+1$, In order to do this, we introduce a version of Danskin's theorem that we will utilize that can be proved by applying a version of Danskin's Theorem .
Our approach is to search for a descent direction for the convex function $l(\cdot)$ from $\delta_{\hat\theta}$. In the following, we will study two types of search directions, each using its own version of Danskin's Theorem \cite{bertsekas1971control,bertsekas2003convex}. To proceed, we introduce the following definition:

\begin{defn}
	Define $\Theta^*(\mu)$ to be the set of optimal points for the maximization problem in ${l}(\mu)=\sup\limits_{\theta\in\mathcal{U}_{data}}L(\mu,\theta)$ given $\mu\in\mathcal{P}(\Theta)$ :
	\begin{equation}
	\Theta^*(\mu)=\text{argmax}_{\theta\in\mathcal U_{data}}L(\mu,\theta)
%     \{\theta \in \mathcal{U}_{data}:{L}(\mu,\theta)={l}(\mu)\}.
	\end{equation}
\end{defn}

It can be shown that $\Theta^*(\mu)$ is non-empty and  $\Theta^*(\mu)\subseteq\mathcal{U}_{data}$ because  $\mathcal{U}_{data}$ is compact and ${L}(\mu,\theta)$ is continuous in $\theta$.
% We have the following:

\subsection{Mixing with a Proposed Distribution}
\label{sec:line search}
We consider mixing distributions in the form $(1-t)\delta_{\hat\theta}+t\mu_{prop}$ for some proposed distribution $\mu_{prop}$, and look for a descent direction by varying $t$ from 0 to 1. We have the following result that is a consequence of Danskin's Theorem  that involves a one-sided derivative. We provide proofs both for this theorem and our following result in Appendix \ref{sec:proofs}.
% To proceed, we have the following definition:

% we get the following more explicit expression for the directional gradient of $l(\mu)$:

\begin{theorem}\label{ana}
	Fix any $\mu_1,\mu_2\in\mathcal P(\Theta)$ and $\theta \in \Theta$. Under the assumptions that $\psi(t)=L((1-t)\mu_1+t\mu_2,\theta)$ is well defined for $0\leq t \leq 1$, we know that the function $g(y,t)$
	\begin{equation*}
	g(y,t):\mathcal Y \times [0,1]\triangleq\frac{(p(y; \theta))^2}{(1-t)\int_{\Theta} p(y;\theta') \mu_1(d\theta')+t\int_{\Theta} p(y;\theta') \mu_2(d\theta')}
	\end{equation*}
	is integrable for $t\in[0,1]$. If we further assume that there exists a integrable function $g_0(y)$ such that
	\begin{equation*}
	\bigg|\frac{(p(y; \theta))^2\cdot \int_{\Theta}p(y;\theta')(\mu_1-\mu_2)(d\theta')}{(\int_{\Theta} p(y;\theta') ((1-t)\mu_1+t\mu_2)(d\theta'))^2}\bigg| \leq g_0(y),
	\end{equation*}
	then we have the right derivative of $\psi(t)$ at $t=0$ given by
%     could extend the result of \thmref{dktheorem} to
	\begin{align}
	{\psi}^{+}(0)&=\sup_{\theta\in \Theta^*(\mu_1)}\lim_{t\downarrow 0} \frac{{L}((1-t)\mu_1+t\mu_2,\theta)-{L}(\mu_1,\theta)}{t}\notag\\
    &=\sup_{\theta\in \Theta^*(\mu_1)}\int_{\mathcal{Y}}\frac{(p(y; \theta))^2\cdot \int_{\Theta}p(y;\theta')(\mu_1-\mu_2)(d\theta')}{(\int_{\Theta} p(y;\theta') \mu_1(d\theta'))^2} dy.\label{mixture direction}
	\end{align}
\end{theorem}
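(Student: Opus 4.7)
The plan is to treat this as a two-step argument: first establish the pointwise (in $\theta$) differentiability of the inner function $t \mapsto L((1-t)\mu_1 + t\mu_2, \theta)$ at $t = 0$ via differentiation under the integral sign, then combine with Danskin's Theorem to pass the derivative through the supremum that defines $l(\cdot) = \sup_{\theta \in \mathcal U_{data}} L(\cdot, \theta)$.

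For the first step, fix $\theta \in \mathcal U_{data}$ and let $\mu_t = (1-t)\mu_1 + t \mu_2$ and $h_t(y) = \int_\Theta p(y;\theta')\mu_t(d\theta')$. The integrand in $L(\mu_t, \theta) = \int_\mathcal Y (p(y;\theta))^2 / h_t(y)\, dy$ has the $t$-derivative
\begin{equation*}
\frac{\partial}{\partial t}\frac{(p(y;\theta))^2}{h_t(y)} = -\frac{(p(y;\theta))^2 \int_\Theta p(y;\theta')(\mu_2 - \mu_1)(d\theta')}{(h_t(y))^2} = \frac{(p(y;\theta))^2 \int_\Theta p(y;\theta')(\mu_1 - \mu_2)(d\theta')}{(h_t(y))^2}.
\end{equation*}
The domination hypothesis by $g_0(y)$ lets me invoke the Leibniz rule (dominated convergence applied to difference quotients) to differentiate under the integral, yielding the closed-form right derivative of $L(\mu_t,\theta)$ in $t$ at $t = 0$ as exactly the integral appearing on the right-hand side of \eqref{mixture direction}, before taking the $\sup$ over $\Theta^*(\mu_1)$. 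Since this also shows continuity of the derivative in $t$, the finite-difference quotient $(L(\mu_t,\theta) - L(\mu_1,\theta))/t$ converges as $t \downarrow 0$ to this expression for each fixed $\theta$.

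For the second step, I would apply Danskin's Theorem to $\varphi(t) = \sup_{\theta \in \mathcal U_{data}} L(\mu_t, \theta)$. The three ingredients needed are: (i) compactness of $\mathcal U_{data}$, which holds by construction of the $\phi$-divergence ball in a parametric family; (ii) joint continuity of $(t,\theta) \mapsto L(\mu_t,\theta)$ on $[0,1]\times \mathcal U_{data}$, which follows from the continuity of $L(\mu,\theta)$ in $\theta$ (noted right after the definition of $\Theta^*(\mu)$) combined with the integrability in $t$ established above; and (iii) the pointwise-in-$\theta$ existence of the right derivative $\partial_t^+ L(\mu_t,\theta)|_{t=0}$, proved in step one. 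Under these conditions the classical form of Danskin's Theorem (see \cite{bertsekas1971control, bertsekas2003convex}) gives $\varphi^+(0) = \sup_{\theta \in \Theta^*(\mu_1)} \partial_t^+ L(\mu_t,\theta)|_{t=0}$, which, after substituting the formula from the first step, yields precisely \eqref{mixture direction}.

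The main obstacle will be verifying the hypotheses required to invoke Danskin's Theorem rigorously, in particular upgrading the pointwise convergence of the difference quotient to a form that justifies the exchange of limit and supremum. A clean way around uniform-convergence worries is to use the one-sided version of Danskin's Theorem that needs only (a) compactness of the constraint set, (b) continuity of $L$ jointly in $(t,\theta)$ on a neighborhood of $\{0\}\times \mathcal U_{data}$, and (c) an integrable dominating function for the difference quotient uniformly on $\mathcal U_{data}$; the hypothesis on $g_0$ in the theorem is precisely what provides this uniform domination once I note that the bound does not depend on $\theta$ after taking absolute values and using continuity of $p(y;\theta)$. With these in place, the formula for the right derivative follows.
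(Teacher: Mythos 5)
Your first step---differentiating under the integral sign using the dominating function $g_0$---is precisely the paper's own proof of the explicit integral formula (the second equality in \eqref{mixture direction}), so that part matches. The gap is in your second step, the Danskin part. The hypotheses you propose to invoke---compactness of $\mathcal U_{data}$, joint continuity of $(t,\theta)\mapsto L((1-t)\mu_1+t\mu_2,\theta)$, pointwise-in-$\theta$ existence of the right derivative, and "an integrable dominating function for the difference quotient"---are not sufficient to interchange $\lim_{t\downarrow 0}$ with $\sup_{\theta}$. To prove the nontrivial inequality one takes $t_k\downarrow 0$ and maximizers $\theta_k\in\Theta^*((1-t_k)\mu_1+t_k\mu_2)$ with $\theta_k\to\theta_0$, and must show $\limsup_k \bigl(L((1-t_k)\mu_1+t_k\mu_2,\theta_k)-L(\mu_1,\theta_k)\bigr)/t_k$ is bounded by the derivative at $\theta_0$; a dominating function only gives equi-boundedness of these quotients, which controls neither the cross term in $\theta$ nor the unknown rate at which $\theta_k\to\theta_0$ relative to $t_k$. (Also, the $g_0$ in the theorem is fixed after fixing $\theta$, so it does not automatically yield uniformity over $\mathcal U_{data}$, contrary to your closing claim.)

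What actually closes this step in the paper is the convexity of $\mu\mapsto L(\mu,\theta)$ (Lemma \ref{convexity}), which your proposal never uses. Convexity makes the difference quotients monotone in $t$, so that for $f_k(t)=L((1-t_k)\mu_1+t_k\mu_2+t(\mu_2-\mu_1),\theta_k)$ one has $\frac{f_k(0)-f_k(-t_k)}{t_k}\le f_k^{+}(0)\le \frac{f_k(s)-f_k(0)}{s}$ for every fixed $s>0$, and the right-hand side can then be passed to the limit in $k$ using only joint continuity of $L$; this is exactly Lemma \ref{dir} and Theorem \ref{dktheorem}. If you want to avoid convexity, the standard alternative is the classical Danskin theorem, which requires the $t$-derivative to be \emph{jointly} continuous in $(t,\theta)$---your step one only establishes continuity in $t$ for each fixed $\theta$, so that route as written is also incomplete.
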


% p_{prop}(y)=\int_{\Theta} p(y;\theta')
The quantity $\psi^+(0)$ is the directional derivative of $L(\mu_1)$ in the direction $\mu_2-\mu_1$. Thus, to improve on $\mathcal D_{data}(\mathbb P_{\hat\theta})$, we can propose a mixing distribution $\mu_{prop}(d\theta')$, and substitute $\mu_1=\delta_{\hat\theta}$ and $\mu_2=\mu_{prop}$ in \eqref{mixture direction} to check if
\begin{equation}
\sup_{\theta\in \Theta^*(\delta_{\hat\theta})}\int_{\mathcal{Y}}\frac{(p(y; \theta))^2\cdot \int_{\Theta}p(y;\theta')(\delta_{\hat\theta}-\mu_{prop})(d\theta')}{ p(y;\hat\theta)^2} dy < 0,\label{verify descent}
\end{equation}
which indicates a strict descent for $l(\cdot)$ from $\delta_{\hat\theta}$ to $\mu_{prop}$. In this case, it follows from the convexity of $l(\cdot)$  that we can find some $0<t\leq 1$ such that $l((1-t)\delta_{\hat\theta}+t\mu_{prop}) < l(\delta_{\hat\theta})$, so that
\begin{equation}
p_{t}(y)=\int_{\Theta} p(y;\theta')  ((1-t)\delta_{\hat\theta}+t\mu_{prop})(d\theta'),\label{bisection}
\end{equation}
gives rise to $\mathcal D_{data}(\mathbb P_0) < \mathcal D_{data}(\mathbb P_{\hat\theta})$. Finding such a $t$ can be done by a bisection search or enumerating $\mathcal D_{data}(\mathbb P_0)$ on $p_t$ over a grid of $t$. Note that the above can be implemented only if \eqref{verify descent} can be verified and also if $\mathcal D_{data}(\mathbb P_0)$ is computable. We will show that both properties are satisfied for the case of multivariate Gaussian when $\mu_{prop}$ is properly chosen. In particular, we will identify general sufficient conditions for $\mu_{prop}$ to guarantee \eqref{verify descent}, and also find $\mu_{prop}$ such that the maximization involved in computing $\mathcal D_{data}(\mathbb P_0)$ in \eqref{equivalent} can be reduced to a one-dimensional problem.

% the involved integratl  If the integration involved in evaluating $\mathcal D_{data}(\mathbb P_0)$ is difficult, we can first use Monte Carlo to approximate $\mu_{prop}(d\theta)$ and the integral. Then optimize the empirical value over different points in $\Theta$ provided we can overcome the curse of dimensionality for large $D$. On the other hand, we also note that the above idea can be implemented only if  Fortunately, both problems can be overcome. We summarize the results in proposition \ref{collec}, \ref{cod} and \ref{cod1}.

% The next two subsections will provide examples for $\mu_{prop}$ such that this is doable.
% examples in utilizing the above procedures to search for better generating distributions.
% To illustrate our method in this section, we use it on a concrete example, the multivariate Gaussian distribution with unknown mean $\theta\in \mathbb R^D$ and known covariance matrix $\Sigma\in\mathbb R^{D\times D}$.

% $\{\theta_i\}_{1\leq i \leq M} \subseteq \Theta$ which leads to ${p_0}(y)=\sum_{i=1}^{M}p(y,\theta_i)/M$.

% \subsection{Example: Multivariate Gaussian Boundary Mixing}
% To facilitate our discussion, we use a concrete example,

 Consider a multivariate Gaussian distribution with unknown mean $\Theta \subset \mathbb{R}^D$ in an open convex set with density
\begin{equation}\label{gaupdf}
p(y;\theta)=\frac{1}{\sqrt{(2\pi)^D |\Sigma|}} \cdot  e^{-\frac{1}{2}(y-\theta)^\intercal \Sigma^{-1}(y-\theta)},
\end{equation}
where $\Sigma$ is a fixed positive semi-definite covariance matrix.
% and the Fisher information as
% \begin{equation}\label{fish}
% \mathcal{I}(\theta)=-\mathbb{E}_{\theta}[\nabla^2\log p(Y; \theta)]=\nabla^2 F(\theta).
% \end{equation}
Direct verification (in Appendix \ref{sec:proofs}) shows that
\begin{align}\label{final1}
{\mathcal{U}}_{data}\triangleq\left\{\theta \in \Theta: \chi^2(\mathbb P_{\hat\theta}, \mathbb P_\theta)\leq \frac{\chi^2_{1-\alpha,D}}{n}\right\}
% =&\left\{\theta \in \Theta: e^{F(2\theta -\hat\theta)-(2F(\theta)-F(\hat\theta))}-1\leq \frac{\chi^2_{1-\alpha,D}}{n}\right\} \nonumber\\
=&\left\{\theta \in \Theta: e^{(\theta-\hat\theta)^\intercal\Sigma^{-1}(\theta-\hat\theta)}-1\leq \frac{\chi^2_{1-\alpha,D}}{n}\right\}\nonumber\\
=&\left\{\theta: \hat\theta+\Sigma^{\frac{1}{2}}v, \quad \text {for }  \|v\|_2^2 \leq \log(1+\frac{\chi^2_{1-\alpha,D}}{n})\right\},
\end{align}
and thus
% =\{\theta \in \mathcal{U}_{data}:{L}(\delta_{\hat\theta},\theta)={l}(\delta_{\hat\theta})\}
\begin{align}\label{final2}
\Theta^* (\delta_{\hat\theta})
=&  \text{argmax}_{\theta\in\mathcal U_{data}} e^{(\theta-\hat\theta)^\intercal\Sigma^{-1}(\theta-\hat\theta)}=\big\{\theta: \hat\theta+\Sigma^{\frac{1}{2}}v, \quad \text {for }  \|v\|_2^2 = \log(1+\frac{\chi^2_{1-\alpha,D}}{n})\big\}.
\end{align}

% Now, we can calculate a descent direction as in the following proposition. To be specific, we call a distribution $\mu_{prop}(\theta')\in \mathcal P (\Theta)$ ``a of $\mu_{prop}$
We propose the following $\mu_{prop}$. First, we call a distribution on $\Theta$ symmetrical around $\theta\in\Theta$ if its probability density or mass function has the same value for any $\theta_1,\theta_2 \in \Theta$ such that $\theta=\frac{\theta_1+\theta_2}{2}$.
\begin{prop}\label{collec}
	Let $\mu_{prop}(d\theta')$ be any symmetrical distribution around $\hat\theta$. Given $\theta \in \Theta^* (\delta_{\hat\theta})$, we define $Y_{\theta}=(\theta-\hat\theta)^\intercal\Sigma^{-1}(\theta'-\hat\theta)$ with $\theta'\sim\mu_{prop}(d\theta')$. Suppose there exists an integrable random variable  $Y$ under the measure $\mu_{prop}$ such that $e^{2Y_{\theta}}  \leq Y $ for all $\theta \in \Theta^* (\delta_{\hat\theta})$. If, for each $\theta \in \Theta^* (\delta_{\hat\theta})$, $Y_{\theta}$ does not equal to 0 with probability 1, then \eqref{verify descent} holds and the mixture distribution produced by $\mu_{prop}(d\theta)$ would result in a descent direction on $\mathcal D_{data}(\mathbb P_{\hat\theta})$.
\end{prop}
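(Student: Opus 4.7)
The plan is to evaluate the integrand of \eqref{verify descent} in closed form using the Gaussian structure, then reduce the question to a moment-generating-function inequality that follows from Jensen and the symmetry of $\mu_{prop}$. As a first step I would show, for any $\theta\in\Theta^*(\delta_{\hat\theta})$ and $\theta'\in\Theta$, that
\begin{equation*}
\int_{\mathcal Y}\frac{(p(y;\theta))^2\, p(y;\theta')}{(p(y;\hat\theta))^2}\,dy=\exp\bigl((\theta-\hat\theta)^\intercal\Sigma^{-1}(\theta-\hat\theta)+2(\theta-\hat\theta)^\intercal\Sigma^{-1}(\theta'-\hat\theta)\bigr),
\end{equation*}
by completing the square in the exponent of the product of three Gaussian densities (the cross terms involving $\hat\theta$ cancel cleanly when written in the variables $\theta-\hat\theta$ and $\theta'-\hat\theta$). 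Specializing to $\theta'=\hat\theta$ gives $\int(p(y;\theta))^2/p(y;\hat\theta)\,dy=e^{(\theta-\hat\theta)^\intercal\Sigma^{-1}(\theta-\hat\theta)}$. Integrating the displayed identity against $\mu_{prop}(d\theta')$ using Fubini (justified by the integrable envelope $Y$ of the hypothesis) and subtracting, the integrand inside the supremum of \eqref{verify descent} becomes
\begin{equation*}
e^{(\theta-\hat\theta)^\intercal\Sigma^{-1}(\theta-\hat\theta)}\Bigl(1-E_{\mu_{prop}}\bigl[e^{2Y_\theta}\bigr]\Bigr).
\end{equation*}

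Because every $\theta\in\Theta^*(\delta_{\hat\theta})$ satisfies $(\theta-\hat\theta)^\intercal\Sigma^{-1}(\theta-\hat\theta)=\log(1+\chi^2_{1-\alpha,D}/n)$ by \eqref{final2}, the prefactor is a strictly positive constant on $\Theta^*(\delta_{\hat\theta})$. Establishing \eqref{verify descent} therefore reduces to proving $\inf_{\theta\in\Theta^*(\delta_{\hat\theta})}E_{\mu_{prop}}[e^{2Y_\theta}]>1$. For a pointwise strict lower bound I would use the symmetry of $\mu_{prop}$ around $\hat\theta$: the distribution of $\theta'-\hat\theta$ is symmetric about the origin, so $Y_\theta$ and $-Y_\theta$ are equal in law and hence $E[Y_\theta]=0$. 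Applying Jensen's inequality to the strictly convex function $t\mapsto e^{2t}$ then gives $E[e^{2Y_\theta}]\geq e^{2E[Y_\theta]}=1$, with equality only if $Y_\theta=0$ almost surely; the latter is ruled out for each $\theta\in\Theta^*(\delta_{\hat\theta})$ by the non-degeneracy hypothesis, so the inequality is strict pointwise.

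To upgrade from a pointwise strict inequality to a uniform one, I would argue that $\Theta^*(\delta_{\hat\theta})$ is compact (it is the image of a sphere under the continuous affine map $v\mapsto\hat\theta+\Sigma^{1/2}v$ by \eqref{final2}) and that $\theta\mapsto E_{\mu_{prop}}[e^{2Y_\theta}]$ is continuous on $\Theta^*(\delta_{\hat\theta})$; the latter follows from the dominated convergence theorem with envelope $Y$, since $Y_\theta$ is linear in $\theta$ for each fixed $\theta'$. The infimum is therefore attained and, by the pointwise strict inequality above, strictly exceeds $1$, which combined with the positivity of the Gaussian prefactor yields \eqref{verify descent}. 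The main obstacle is really the bookkeeping in the initial Gaussian integral and careful use of the envelope $Y$ to legitimize both Fubini and dominated convergence; the probabilistic core of the argument---symmetry plus strict Jensen---is short and conceptually transparent.
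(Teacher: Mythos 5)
Your proposal is correct and follows essentially the same route as the paper's proof: the closed-form Gaussian integrals (the paper's Lemmas \ref{pq} and \ref{3pq}), Fubini with the envelope $Y$, the reduction to $\inf_{\theta\in\Theta^*(\delta_{\hat\theta})}E_{\mu_{prop}}[e^{2Y_\theta}]>1$, and symmetry plus strict Jensen for the pointwise bound. The only cosmetic difference is that you upgrade to a uniform strict inequality via continuity and attainment on the compact set $\Theta^*(\delta_{\hat\theta})$, whereas the paper argues by contradiction along a convergent subsequence; both hinge on the same compactness and the same dominated-convergence use of the envelope $Y$.
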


One can check that any Gaussian distribution with mean $\hat\theta$ satisfies the conditions of Proposition \ref{collec}, and so does any $\mu_{prop}(d\theta')$ that is discrete, symmetrical around $\hat\theta$, whose outcome directions $\theta'-\theta$ constitute a basis of $\mathbb R^D$.
% has a continuous density function, we propose a following one that is easy to simulate. Specifically, In other words, in order to simulate $\theta\sim\mu_{prop}(d\theta)$,  uniformly on the surface of the $D$ dimension ball with unit radius representing each coordinate of $\eta$ and Then, we set $\theta=\hat{\theta}+\sqrt{\frac{\chi^2_{1-\alpha,D}}{n}}\cdot\Sigma^{-1/2}\cdot\eta$ and obtain $\theta\sim\mu_{prop}(d\theta)$. that satisfies the condition of Proposition \ref{collec} , it satisfies the conditions (see, e.g., \cite{Muller:1959:NMG:377939.377946})
Alternately, we also consider the following continuous $\mu_{prop}$. We set $\theta'\sim\hat{\theta}+\sqrt{\frac{\chi^2_{1-\alpha,D}}{n}}\cdot\Sigma^{1/2}\eta$ where $\eta$ is a random vector uniformly distributed on the surface of the $D$-dimension unit ball. Note that this $\theta'$ can be efficiently simulated by sampling $D$ independent standard Gaussian random variables and scaling their norm to unit length to obtain $\eta$. While this $\mu_{prop}$ can be readily checked to satisfy the conditions in Proposition \ref{collec}, we also provide an alternate proof on the validity of this $\mu_{prop}$ in achieving a descent direction in \lemref{move} in the Appendix, as results proven therein provide important reference to calculations in numerical experiments regarding $\mu_{prop}$.

Next, we discuss the computation of $\mathcal D_{data}(\mathbb P_0)$ for a given $\mathbb P_0$. First, we call a random variable $Y$ on $\mathcal Y\subset \mathbb R^k$ rotationally invariant if $ Y {\buildrel \mathcal D \over =} Q^\intercal Y$ for any rotational matrix $Q\in\mathbb R^{k \times k}$. Using this notion, the following shows how one can reduce the $D$-dimensional maximization problem in the definition of $\mathcal D_{data}(\mathbb P_0)$ into a one-dimensional problem.
\begin{prop}\label{cod}
Given a nominal distribution $Y\sim\mathbb P_0$ and a multivariate Gaussian family with known covariance $\Sigma$ denoted $\mathbb P_\theta=\mathcal N(\theta,\Sigma)$. If the nominal distribution $Y\sim\mathbb P_0$ satisfies the condition that the random variable $Z=\Sigma^{-1/2}(Y-\hat\theta)$ is rotationally invariant, then for any $\theta_1,\theta_2$ satisfying $(\theta_1-\hat\theta)^\intercal\Sigma^{-1}(\theta_1-\hat\theta)=(\theta_2-\hat\theta)^\intercal\Sigma^{-1}(\theta_2-\hat\theta)$, we have
\begin{equation}\label{codp1}
    \chi^2(\mathbb P_0,\mathbb P_{\theta_1})=\chi^2(\mathbb P_0,\mathbb P_{\theta_2}).
\end{equation}
 Thus, for $\mathcal D_{data}(\mathbb P_0)=\max_{\theta\in\mathcal U_{data}}\chi^2(\mathbb P_0,\mathbb P_\theta)$ with $\mathcal U_{data}=\{\theta \in \Theta: (\theta-\hat\theta)^\intercal\Sigma^{-1}(\theta-\hat\theta)\leq \lambda\}$ as in \eqref{final1}, we have
 \begin{equation}\label{codp2}
\mathcal D_{data}(\mathbb P_0)=\max_{0\leq t \leq 1}\chi^2(\mathbb P_0,\mathbb P_{(1-t)\hat\theta+t\theta^\star}),
\end{equation}
given any $\theta^\star$ satisfying $(\theta^\star-\hat\theta)^\intercal\Sigma^{-1}(\theta^\star-\hat\theta)=\lambda$.
\end{prop}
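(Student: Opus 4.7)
The plan is to rewrite $\chi^2(\mathbb P_0, \mathbb P_\theta)$ via the change of variable $z=\Sigma^{-1/2}(y-\hat\theta)$, which normalizes the Gaussian structure, and then use the rotational invariance of $Z$ to show the resulting integral depends on $\theta$ only through the Mahalanobis-squared distance $(\theta-\hat\theta)^\intercal \Sigma^{-1}(\theta-\hat\theta)$. Once \eqref{codp1} is established, \eqref{codp2} will follow by parameterizing.

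First, I would expand
$$\chi^2(\mathbb P_0,\mathbb P_\theta)+1 = \int \frac{(p_\theta(y))^2}{p_0(y)}\,dy,$$
using the Gaussian density \eqref{gaupdf}. Next I would substitute $y=\Sigma^{1/2}z+\hat\theta$. Letting $f_Z$ be the density of $Z$, we have $p_0(y)=f_Z(z)/|\Sigma|^{1/2}$; with $u:=\Sigma^{-1/2}(\theta-\hat\theta)$ the exponent satisfies $(y-\theta)^\intercal\Sigma^{-1}(y-\theta)=\|z-u\|^2$; and the Jacobian contributes $|\Sigma|^{1/2}$. After cancellation the integral reduces to
$$\chi^2(\mathbb P_0,\mathbb P_\theta)+1 \;=\; \frac{1}{(2\pi)^D}\int \frac{e^{-\|z-u\|^2}}{f_Z(z)}\,dz.$$
Third, for any orthogonal matrix $Q$, I would substitute $z\mapsto Qz$ (unit Jacobian); rotational invariance of $Z$ gives $f_Z(Qz)=f_Z(z)$, and $\|Qz-u\|^2=\|z-Q^\intercal u\|^2$. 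Hence the integral is invariant under replacing $u$ by $Q^\intercal u$. Since any two vectors in $\mathbb R^D$ of equal Euclidean norm are related by some rotation (and in the degenerate case $D=1$, by a sign flip, covered by rotational invariance which reduces to symmetry), the integral depends on $u$ solely through $\|u\|^2=(\theta-\hat\theta)^\intercal \Sigma^{-1}(\theta-\hat\theta)$. This gives \eqref{codp1}.

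For \eqref{codp2}, I would observe that as $t$ varies over $[0,1]$, the point $(1-t)\hat\theta+t\theta^\star$ satisfies
$$\bigl((1-t)\hat\theta+t\theta^\star-\hat\theta\bigr)^\intercal\Sigma^{-1}\bigl((1-t)\hat\theta+t\theta^\star-\hat\theta\bigr)=t^2\lambda,$$
which sweeps out all values in $[0,\lambda]$. Since \eqref{codp1} implies $\chi^2(\mathbb P_0,\mathbb P_\theta)$ depends on $\theta\in\mathcal U_{data}$ only through this Mahalanobis-squared distance, and $\mathcal U_{data}$ is precisely the level set where this distance is at most $\lambda$, the maximum over $\mathcal U_{data}$ coincides with the maximum along this one-dimensional segment.

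The main obstacle I anticipate is purely technical: ensuring integrability so that the change of variables and the rotational substitution are rigorously justified without further assumptions on $f_Z$, and being careful that the argument still goes through in low dimension (where rotational invariance reduces to reflection symmetry). Beyond that, the proof is essentially a clean computation once the substitution $z=\Sigma^{-1/2}(y-\hat\theta)$ is made.
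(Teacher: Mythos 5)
Your proposal is correct and follows essentially the same route as the paper: the substitution $z=\Sigma^{-1/2}(y-\hat\theta)$ reduces $\chi^2(\mathbb P_0,\mathbb P_\theta)+1$ to an integral involving $f_Z$ and a Gaussian kernel centered at $u=\Sigma^{-1/2}(\theta-\hat\theta)$, and rotational invariance of $Z$ shows this depends on $u$ only through $\|u\|_2^2$; the paper merely writes the exponent as $e^{-z^\intercal z}e^{-2z^\intercal\Sigma^{-1/2}(\hat\theta-\theta)}$ (after factoring out $e^{-\|u\|_2^2}$) rather than $e^{-\|z-u\|^2}$, which is the same computation. Your handling of \eqref{codp2} via $t^2\lambda$ sweeping $[0,\lambda]$ also matches the paper's argument.
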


\begin{prop}\label{cod1}
Given $0 \leq t \leq 1$ and $\mu_{prop}(d\theta)=\hat\theta+\sqrt{\frac{\chi^2_{1-\alpha,D}}{n}}\cdot \Sigma^{1/2}\eta$, where $\eta$ is a random vector uniformly distributed on the surface of the $D$-dimension unit ball, the nominal measure $\mathbb P_t$ with density
\begin{equation*}
p_{t}(y)=\int_{\Theta} p(y;\theta')  ((1-t)\delta_{\hat\theta}+t\mu_{prop})(d\theta')=(1-t)\mathbb P_{\hat\theta}+t\int_{\Theta}p(y;\theta')\mu_{prop}(d\theta'),
\end{equation*}
satisfies the conditions in  Proposition \ref{cod}.
\end{prop}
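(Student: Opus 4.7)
The plan is to show directly that, when $Y\sim \mathbb P_t$, the random vector $Z=\Sigma^{-1/2}(Y-\hat\theta)$ is rotationally invariant, i.e. $Z\stackrel{\mathcal D}{=}Q^\intercal Z$ for every orthogonal $Q$. Once this is verified, the hypothesis of Proposition~\ref{cod} is satisfied and the conclusion follows.

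First I would unpack the mixture representation of $\mathbb P_t$. By construction $Y\sim\mathbb P_t$ can be sampled in two stages: with probability $1-t$ draw $Y\sim\mathbb P_{\hat\theta}=\mathcal N(\hat\theta,\Sigma)$; with probability $t$ draw first $\theta'\sim\mu_{prop}$ and then $Y\mid\theta'\sim\mathcal N(\theta',\Sigma)$. Writing $W,W'\sim\mathcal N(0,I_D)$ independent of everything else, this gives the two conditional representations
\begin{equation*}
Y=\hat\theta+\Sigma^{1/2}W\quad\text{or}\quad Y=\theta'+\Sigma^{1/2}W'=\hat\theta+\sqrt{\tfrac{\chi^2_{1-\alpha,D}}{n}}\,\Sigma^{1/2}\eta+\Sigma^{1/2}W',
\end{equation*}
where in the second case $\eta$ is uniform on the unit sphere in $\mathbb R^D$ and is independent of $W'$. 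Applying $\Sigma^{-1/2}(\cdot-\hat\theta)$ turns these into $Z=W$ on the first branch and $Z=\sqrt{\chi^2_{1-\alpha,D}/n}\,\eta+W'$ on the second branch.

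The rotational invariance then reduces to two standard facts. On the first branch, $Z=W\sim\mathcal N(0,I_D)$ and $Q^\intercal W\sim\mathcal N(0,I_D)$, which gives equality in distribution. On the second branch, the uniform distribution on the unit sphere is invariant under orthogonal transformations, so $Q^\intercal\eta\stackrel{\mathcal D}{=}\eta$, and similarly $Q^\intercal W'\stackrel{\mathcal D}{=}W'$. Since $\eta$ and $W'$ are independent, the joint law $(\eta,W')$ is invariant under the product action $(Q^\intercal\eta,Q^\intercal W')$, hence
\begin{equation*}
Q^\intercal Z=\sqrt{\tfrac{\chi^2_{1-\alpha,D}}{n}}\,Q^\intercal\eta+Q^\intercal W'\stackrel{\mathcal D}{=}\sqrt{\tfrac{\chi^2_{1-\alpha,D}}{n}}\,\eta+W'=Z.
\end{equation*}
Because both conditional distributions of $Z$ are rotationally invariant, the unconditional mixture law of $Z$ under $\mathbb P_t$ is rotationally invariant as well, which is exactly what Proposition~\ref{cod} requires.

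The only nontrivial point, and where I would spend a little care, is the independence bookkeeping in the second branch: $\eta$ comes from the mixing measure $\mu_{prop}$ while $W'$ comes from the Gaussian step conditional on $\theta'$, and one must verify that these are genuinely independent so that applying $Q^\intercal$ componentwise preserves the joint distribution. Once that is noted, the argument is essentially a one-line consequence of the rotational invariance of the standard Gaussian and of the uniform law on the sphere, together with closure of rotational invariance under mixtures.
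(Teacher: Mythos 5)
Your proof is correct and follows essentially the same route as the paper: both decompose $\mathbb P_t$ as a two-branch mixture (the paper writes it with an auxiliary Bernoulli variable $U_t$), reduce $Z=\Sigma^{-1/2}(Y-\hat\theta)$ on each branch to a combination of a standard Gaussian and a uniform vector on the sphere, and conclude from the rotational invariance of each ingredient together with their independence. The independence bookkeeping you flag is exactly the point the paper also relies on, so nothing further is needed.
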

Therefore, in computing $\mathcal D_{data}(\mathbb P_0)$ derived from the proposed distribution $\mu_{prop}(d\theta)=\hat\theta+\sqrt{\frac{\chi^2_{1-\alpha,D}}{n}}\cdot \Sigma^{1/2}\eta$,
using Propositions \ref{cod} and \ref{cod1} we can change the domain of the involved maximization from $\Theta\subset\mathbb R^D$ into $\mathbb R$, leading to a substantial reduction in the search space and a tractable problem.

\subsection{Enlarging Mixture Variability}
\label{sec:mix var}
Our next proposal is to consider a continuous mixing distribution $\mu_r(d\theta')$ on $\Theta$ where $r\geq0$ controls the variability of the distribution, so that $r=0$ corresponds to $\delta_{\hat\theta}$. Here, we can parametrize the density of the generating distribution as
\begin{equation}
p_r(y)=\int_{\Theta} p(y;\theta')\mu_{r}(d\theta'),\label{mix var}
\end{equation}
and our search direction is along $r$ starting from $r=0$.
% Still under the framework of Multivariate Gaussian, to connect our example in the previous section more closely with the methods in the Bayesian setting, we introduce an alternate construction for our mixture density. To be specific,
% for measures $\{\mu_r \}_{r \geq 0}$ on $\Theta$.
We propose two possible ways to define $\mu_r(d\theta')$. First is to let $\mu^1_r(d\theta')$ follow the distribution of $\theta' \sim \hat\theta+\Sigma^{\frac{1}{2}}\cdot \eta_{\sqrt{r}}$ where $\eta_{\sqrt{r}}$ is the uniform distribution inside the $D$-dimensional unit ball with radius $\sqrt{r}$. Second is to let $\mu^2_{r}(d\theta')$ follow $\mathcal N(\hat\theta,r\Sigma)$. The second approach in particular can  be intuited as the posterior distribution of the parameter from a Bayesian perspective. In both cases, we notice that letting $r=0$ would recover the original baseline distribution $p(y;\hat\theta)$.

To analyze these schemes, we abuse notation slightly and now define $L: \mathbb R^+ \times \Theta \rightarrow \mathbb R$ to be
\begin{equation}\label{def12}
L(r,\theta)\triangleq\int_{\mathcal{Y}}\frac{(p(y; \theta))^2}{p_r(y)} dy,
\end{equation}
and
\begin{equation}\label{def123}
l(r)\triangleq\sup\limits_{\theta\in\mathcal{U}_{data}}L(r,\theta).
\end{equation}
We show that increasing $r$ to positive values would produce a descent direction for $l(r)$ at $r=0$, when the underlying distribution is Gaussian. Recall that in this case $\Theta^*(\delta_{\hat\theta})$ can be expressed by \eqref{final2}. As $l(r)$ is not necessarily convex in this situation, we use a generalized version of Danskin's Theorem \cite{clarke1975generalized} for non-convex problems to get the following result:
% . However, we show in that $l(r)$ that we can use a generalized version of the Danskin theorem to claim:

\begin{theorem}\label{more}
    With $l(r)$ and $L(r,\theta)$ defined in \eqref{def12} and \eqref{def123}, and $p(y;\theta)$ multivariate Gaussian with mean $\theta$ and known positive definite covariance $\Sigma$, we have
    \begin{align}\label{dproof}
l^+(0)=\lim_{r\downarrow 0} \frac{l(r)-l(0)}{r}= (1+\frac{\chi^2_{1-\alpha,D}}{n})\cdot\lim_{r\downarrow 0}\frac{1}{r}\Big(1-\inf_{\theta\in \Theta^*(\delta_{\hat\theta})}\mathbb E_{\theta'\sim\mu_{r}}[e^{2(\theta-\hat\theta)^\intercal\Sigma^{-1}(\theta'-\hat\theta)}]\Big)
\end{align}
% similarly as in \eqref{cproof}.
\end{theorem}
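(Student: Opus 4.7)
The plan is to invoke a non-convex version of Danskin's theorem (e.g., \cite{clarke1975generalized}) to reduce the computation of $l^+(0)=\lim_{r\downarrow 0}(l(r)-l(0))/r$ to a supremum over the active set $\Theta^*(\delta_{\hat\theta})$ of the right $r$-derivative of $L(\cdot,\theta)$ at $r=0$. The regularity hypotheses to be verified are the joint continuity of $L(r,\theta)$ on a neighborhood of $\{0\}\times\mathcal U_{data}$, compactness of $\mathcal U_{data}$, and existence and continuity in $\theta$ of the one-sided derivative $\partial_r^+ L(0,\theta)$. Once these are in place, one obtains
\begin{equation*}
l^+(0)=\sup_{\theta\in\Theta^*(\delta_{\hat\theta})}\lim_{r\downarrow 0}\frac{L(r,\theta)-L(0,\theta)}{r}.
\end{equation*}
In the Gaussian case $L(0,\theta)=\int p(y;\theta)^2/p(y;\hat\theta)\,dy=e^{(\theta-\hat\theta)^\intercal\Sigma^{-1}(\theta-\hat\theta)}$, and \eqref{final2} ensures every $\theta\in\Theta^*(\delta_{\hat\theta})$ attains the common value $1+\chi^2_{1-\alpha,D}/n$, so this constant can be pulled outside the supremum at the end.

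To compute the per-$\theta$ derivative, I would rewrite
\begin{equation*}
L(r,\theta)-L(0,\theta)=-\int\frac{p(y;\theta)^2}{p(y;\hat\theta)}\cdot\frac{p_r(y)-p(y;\hat\theta)}{p_r(y)}\,dy,
\end{equation*}
replace $p_r(y)$ by $p(y;\hat\theta)$ in the denominator up to an $o(r)$ error (justified by dominated convergence, since $p_r(y)/p(y;\hat\theta)\to 1$ as $r\downarrow 0$), and apply Fubini. Using the Gaussian identity $p(y;\theta)^2/p(y;\hat\theta)=e^{(\theta-\hat\theta)^\intercal\Sigma^{-1}(\theta-\hat\theta)}p(y;2\theta-\hat\theta)$ together with the moment-generating-function computation $\int[p(y;\theta')/p(y;\hat\theta)]\,p(y;2\theta-\hat\theta)\,dy=e^{2(\theta-\hat\theta)^\intercal\Sigma^{-1}(\theta'-\hat\theta)}$, the leading-order term collapses to
\begin{equation*}
e^{(\theta-\hat\theta)^\intercal\Sigma^{-1}(\theta-\hat\theta)}\Big(1-\mathbb E_{\theta'\sim\mu_r}\big[e^{2(\theta-\hat\theta)^\intercal\Sigma^{-1}(\theta'-\hat\theta)}\big]\Big).
\end{equation*}
Dividing by $r$, letting $r\downarrow 0$, and factoring out the common value $1+\chi^2_{1-\alpha,D}/n$ on $\Theta^*(\delta_{\hat\theta})$ produces the per-$\theta$ right derivative in the form appearing inside the supremum of the claimed identity.

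Finally, to pass from the Danskin supremum over $\theta$ to the infimum inside the limit in \eqref{dproof}, I would combine the elementary identity $\sup_\theta(1-a_r(\theta))=1-\inf_\theta a_r(\theta)$, applied to $a_r(\theta)=\mathbb E_{\theta'\sim\mu_r}[e^{2(\theta-\hat\theta)^\intercal\Sigma^{-1}(\theta'-\hat\theta)}]$, with an interchange of supremum and limit on the compact set $\Theta^*(\delta_{\hat\theta})$, justified by uniform-in-$\theta$ convergence of the normalized difference $(L(r,\theta)-L(0,\theta))/r$. I expect the main obstacle to be exactly this uniform control together with the regularity check for Danskin: the integrand $p(y;\theta)^2/p_r(y)$ contains an exponential in $\|y\|^2$ whose integrability must be secured against the shrinking spread of $\mu_r$ for both proposals $\mu_r^1$ (uniform on a ball of radius $\sqrt r$ in the whitened coordinates) and $\mu_r^2=\mathcal N(\hat\theta,r\Sigma)$, and Clarke's hypotheses must be verified at the non-smooth endpoint $r=0$ where only one-sided differentiability of $L(\cdot,\theta)$ is available.
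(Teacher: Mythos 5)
Your proposal is correct and follows essentially the same route as the paper: invoke the generalized Danskin theorem of Clarke to reduce $l^+(0)$ to a supremum over $\Theta^*(\delta_{\hat\theta})$ of the one-sided $r$-derivative of $L(\cdot,\theta)$, differentiate under the integral, evaluate the resulting Gaussian integrals via the exponential-family identities (the paper's Lemmas \ref{pq} and \ref{3pq}), and factor out the common value $1+\chi^2_{1-\alpha,D}/n$ on $\Theta^*(\delta_{\hat\theta})$ to convert the supremum into the infimum inside the limit. The only difference is that you spell out the regularity and sup/limit interchange issues that the paper dismisses as ``routine calculation,'' which is a point in your favor rather than a divergence of method.
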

The proof is in Appendix \ref{sec:proofs}. With Theorem \ref{more}, we can show that both $\mu_r^1$ and $\mu_r^2$ proposed above are valid choices to produce descent directions. Moreover, we can also show that they allow tractable computation of $\mathcal D_{data}(\mathbb P_0)$. These are depicted as follows.
%  Now, we introduce two following lemma. for any $\theta \in \Theta^*(\delta_{\hat\theta})$, both $\lim_{r\downarrow 0}\frac{1}{r}\Big(1-\mathbb E_{\theta'\sim\mu^1_{r}}[e^{2(\theta-\hat\theta)^\intercal\Sigma^{-1}(\theta'-\hat\theta)}]\Big)<0$ and $\lim_{r\downarrow 0}\frac{1}{r}\Big(1-\mathbb E_{\theta'\sim\mu^2_{r}}[e^{2(\theta-\hat\theta)^\intercal\Sigma^{-1}(\theta'-\hat\theta)}]\Big)<0$. Hence

\begin{coro}\label{variability}
	Under the assumptions in Theorem \ref{more},  $l^+(0)<0$ for both $\mu_r^1$ and $\mu_r^2$.
\end{coro}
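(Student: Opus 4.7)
I would apply Theorem~\ref{more}, which expresses $l^+(0)$ as $(1+\chi^2_{1-\alpha,D}/n)$ times the limit
\[
L_{\mu}\;:=\;\lim_{r\downarrow 0}\tfrac{1}{r}\Big(1-\inf_{\theta\in\Theta^*(\delta_{\hat\theta})}\mathbb{E}_{\theta'\sim\mu_r}\bigl[e^{2(\theta-\hat\theta)^\intercal \Sigma^{-1}(\theta'-\hat\theta)}\bigr]\Big).
\]
Since the prefactor is strictly positive, it suffices to show $L_{\mu}<0$ for each of $\mu^1_r$ and $\mu^2_r$. The first simplification is that both proposed mixing measures are rotationally symmetric around $\hat\theta$ in the $\Sigma$-inner product sense: letting $Z=\Sigma^{-1/2}(\theta'-\hat\theta)$, the distribution of $Z$ is uniform on the Euclidean ball of radius $\sqrt{r}$ (for $\mu^1_r$) or $\mathcal{N}(0,rI)$ (for $\mu^2_r$). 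Parametrizing $\theta\in\Theta^*(\delta_{\hat\theta})$ as $\theta=\hat\theta+\Sigma^{1/2}v$ with $\|v\|^2=c^2:=\log(1+\chi^2_{1-\alpha,D}/n)$ via \eqref{final2}, we can rewrite $(\theta-\hat\theta)^\intercal \Sigma^{-1}(\theta'-\hat\theta) = v^\intercal Z$. By rotational invariance of $Z$, the distribution of $v^\intercal Z$ depends on $v$ only through $\|v\|$, so the expectation inside the infimum is constant over $\Theta^*(\delta_{\hat\theta})$ and the infimum collapses to that single value.

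For $\mu^2_r = \mathcal{N}(\hat\theta, r\Sigma)$, the random variable $v^\intercal Z$ is Gaussian with mean $0$ and variance $r\|v\|^2 = rc^2$, so the standard Gaussian MGF identity gives
\[
\mathbb{E}_{\theta'\sim\mu^2_r}\bigl[e^{2v^\intercal Z}\bigr] \;=\; e^{2rc^2} \;=\; (1+\chi^2_{1-\alpha,D}/n)^{2r}.
\]
Differentiating at $r=0$ immediately yields $L_{\mu^2}=-2\log(1+\chi^2_{1-\alpha,D}/n)<0$, and hence $l^+(0)<0$ for $\mu^2_r$.

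For $\mu^1_r$, write $Z=\sqrt{r}\,U$ with $U$ uniform on the unit ball of $\mathbb{R}^D$. Symmetry of $U$ around the origin yields $\mathbb{E}[U]=0$, while a routine polar-coordinate computation gives $\mathbb{E}[\|U\|^2]=D/(D+2)$ and hence, by rotational invariance, $\mathbb{E}[UU^\intercal]=I/(D+2)$. A Taylor expansion $e^{2\sqrt{r}v^\intercal U}=1+2\sqrt{r}\,v^\intercal U+2r(v^\intercal U)^2+R(r,U)$, together with the uniform boundedness $|v^\intercal U|\leq\|v\|$ which supplies a dominating function and justifies interchange of expectation and limit, gives
\[
\mathbb{E}_{\theta'\sim\mu^1_r}\bigl[e^{2v^\intercal Z}\bigr]\;=\;1+\frac{2rc^2}{D+2}+o(r),
\]
so $L_{\mu^1}=-2c^2/(D+2)=-2\log(1+\chi^2_{1-\alpha,D}/n)/(D+2)<0$. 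Combined with the positivity of the prefactor, this gives $l^+(0)<0$ for $\mu^1_r$ as well. There is no real obstacle here: the only technical care needed is in justifying the $o(r)$ remainder in the Taylor expansion for $\mu^1_r$, which is immediate because $\|U\|\le 1$ uniformly bounds all higher-order terms by a constant multiple of $r^{3/2}$.
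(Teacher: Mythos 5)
Your proposal is correct. For $\mu^2_r$ it is essentially identical to the paper's Lemma~\ref{n2}: both reduce to the Gaussian moment generating function and obtain $\mathbb{E}[e^{2v^\intercal Z}]=e^{2r\|v\|^2}$ with $\|v\|^2=\log(1+\chi^2_{1-\alpha,D}/n)$. For $\mu^1_r$ your route differs in a worthwhile way. The paper (Lemma~\ref{n1}) decomposes the uniform distribution on the ball as $R\cdot U$ with $U$ uniform on the sphere, and then invokes the machinery of Lemma~\ref{move}: the projection $\frac{u^\intercal U+1}{2}$ is $\mathrm{Beta}(\frac{D-1}{2},\frac{D-1}{2})$, whose moment generating function is a confluent hypergeometric function satisfying ${}_1F_1(\frac{D-1}{2},D-1,t)\geq e^{t/2}(1+ct^2)$; integrating over $R$ with $\mathbb{E}[R^2]=c_1 r$ then gives the lower bound $1+\Omega(r)$. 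You instead do a direct second-order Taylor expansion of $e^{2\sqrt{r}\,v^\intercal U}$, using $\mathbb{E}[U]=0$, $\mathbb{E}[UU^\intercal]=I/(D+2)$, and the uniform bound $|v^\intercal U|\leq\|v\|$ to control the remainder at order $r^{3/2}$. Your argument is more elementary (no special-function identities needed) and yields the exact value $l^+(0)=-(1+\chi^2_{1-\alpha,D}/n)\cdot 2\log(1+\chi^2_{1-\alpha,D}/n)/(D+2)$ rather than only an upper bound; the paper's approach has the advantage of reusing the hypergeometric computations that it needs elsewhere (in Lemma~\ref{move} and in the evaluation of $\mathcal D_{data}(\mathbb P_0)$ for the numerical experiments). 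Your reduction of the infimum over $\Theta^*(\delta_{\hat\theta})$ to a single $v$ by rotational invariance is the same step the paper performs implicitly by fixing $u_1=[1,0,\dots,0]$, and is valid.
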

\begin{coro}\label{cod2}
Given $r\geq0$ and $\mu_{prop}$ being $\mu_{r}^1(d\theta)$ or $\mu_{r}^2(d\theta)$, the nominal measure $\mathbb P_r$ with density given by \eqref{mix var} satisfies the conditions in Proposition \ref{cod}.
% \begin{equation*}
% p_{t}(y)=\int_{\Theta} p(y;\theta')  ((1-t)\delta_{\hat\theta}+t\mu_{prop})(d\theta')=(1-t)\mathbb P_{\hat\theta}+t\int_{\Theta}p(y;\theta')\mu_{prop}(d\theta'),
% \end{equation*}
\end{coro}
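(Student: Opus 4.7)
The plan is to verify the hypothesis of Proposition \ref{cod} directly: for $Y\sim\mathbb P_r$, show that $Z:=\Sigma^{-1/2}(Y-\hat\theta)$ is rotationally invariant, i.e., $Z\stackrel{\mathcal D}{=}Q^\intercal Z$ for every rotation $Q\in\mathbb R^{D\times D}$.

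First, I would exploit the hierarchical structure of the mixture. By sampling $\theta'\sim\mu_r$ and then $Y\mid\theta'\sim\mathcal N(\theta',\Sigma)$, one can write $Y=\theta'+\Sigma^{1/2}W$ with $W\sim\mathcal N(0,I_D)$ independent of $\theta'$. Both candidate mixing distributions are tailored so that $\theta'=\hat\theta+\Sigma^{1/2}\zeta$ with $\zeta$ rotationally invariant on $\mathbb R^D$ and independent of $W$: for $\mu_r^1$, $\zeta=\eta_{\sqrt r}$ is uniform inside the Euclidean ball of radius $\sqrt r$; for $\mu_r^2$, $\zeta\sim\mathcal N(0,rI_D)$ (which follows from $\mathcal N(\hat\theta,r\Sigma)=\hat\theta+\sqrt r\,\Sigma^{1/2}\mathcal N(0,I_D)$). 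Substituting yields
\begin{equation*}
Z=\Sigma^{-1/2}(Y-\hat\theta)=\zeta+W,
\end{equation*}
an independent sum of two rotationally invariant random vectors on $\mathbb R^D$.

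To finish, I would invoke the elementary fact that an independent sum of rotationally invariant vectors is rotationally invariant: for any rotation $Q$,
\begin{equation*}
Q^\intercal Z=Q^\intercal\zeta+Q^\intercal W\stackrel{\mathcal D}{=}\zeta+W=Z,
\end{equation*}
using independence in the first equality and the invariance of $\zeta$ and $W$ separately in the second. In the Gaussian case $\mu_r^2$ this is immediate since $Z\sim\mathcal N(0,(1+r)I_D)$ is isotropic; in the uniform-ball case $\mu_r^1$ the invariance of $\eta_{\sqrt r}$ is standard.

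The only subtlety — and the reason the definitions of $\mu_r^1$ and $\mu_r^2$ carry the $\Sigma^{1/2}$ and $r\Sigma$ factors — is that the pre-multiplication by $\Sigma^{-1/2}$ and the mixing covariance must align so that the dependence on $\Sigma$ cancels when passing from $Y-\hat\theta$ to $Z$. Once this cancellation is observed there is no genuine obstacle; the result is essentially a consistency check that the proposals of Section~\ref{sec:mix var} fall inside the scope of Proposition~\ref{cod}, and in particular that \eqref{codp2} can be applied to reduce the computation of $\mathcal D_{data}(\mathbb P_r)$ to a one-dimensional maximization.
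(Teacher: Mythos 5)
Your proof is correct and follows essentially the same route as the paper: represent $Y$ hierarchically as $\hat\theta+\Sigma^{1/2}\zeta+\Sigma^{1/2}W$ with $\zeta$ and $W$ independent and each rotationally invariant, so that $Z=\Sigma^{-1/2}(Y-\hat\theta)=\zeta+W$ inherits rotational invariance. The only cosmetic difference is that the paper writes the argument for the Bernoulli-mixed measure $(1-t)\delta_{\hat\theta}+t\mu_r$ (carrying over the structure of Proposition~\ref{cod1}), while you treat the pure mixture $\mathbb P_r$ exactly as stated in \eqref{mix var}; the underlying fact used is identical.
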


The proofs of Corollaries \ref{variability} and \ref{cod1} are in Appendix \ref{sec:proofs}.

% These two lemmas combined with \eqref{dproof} indicate that increasing $r$ to positive value would produce a descent direction for $l(r)$ at $r=0$.
% we provide a self-contained procedure description which summarizes all our previous discussions on solving chance-constrained problem

\subsection{Numerical Demonstrations}\label{sec:numerics mixing}
To confirm our findings in Section \ref{sec:line search} and \ref{sec:mix var}, we perform several numerical experiments. Consider $\mathbb P_\theta$ to be multivariate Gaussian $\mathcal N(\theta,I_D)$ with $k=D=10$. We set $\epsilon=\alpha=0.05$ while $\beta=0.01$ and data size $n=10$ or 5. Notice in this case, the dimension $D$ is high but the available sample $n$ is low and we would actually need $N_{exact}=371$ data points to perform standard SO. Based on our discussion, we compare three choices of $\mu_{prop}$:
\begin{itemize}
    \item $\mu_1=\delta_{\hat\theta}$, the point mass at $\hat\theta$.
    \item $\mu_{2}\sim\hat\theta+\sqrt{\frac{\chi^2_{1-\alpha,D}}{n}}\cdot \eta$, where $\eta$ is the uniform random vector on the surface of a $D$-dimension unit ball, discussed in Section \ref{sec:line search}.
\item $\mu_3\sim \mathcal N(\hat\theta,I_D/n)$, the Gaussian distribution with mean $\hat\theta$ and covariance matrix $I_D/n$, discussed in Section \ref{sec:mix var}.
    \end{itemize}
For $\mu_1$, $\mu_2$ and $\mu_3$, the calculation of $\mathcal D_{data}(\mathbb P_0)$ is tractable. We leave the details in the Appendix as remarks following Lemma \ref{move} and summarize the results in Table \ref{10dim1} and \ref{10dim2}. We use $N$ to denote the number of Monte Carlo samples needed. Moreover, we use both algorithms Extended SO and Extended FAST discussed in Section 5 for demonstration. As we can see, the decrease in $N$ under a better sampling distribution can be considerable, down to less than a third compared to using the baseline in some cases. Mixing with a proposed uniform distribution ($\mu_2$) appears to reduce $N$ more than applying a Gaussian mixture ($\mu_3$). As a side note, we also observe Extended FAST requires significantly less sample size than Extended SO in this example.
% , exhibiting better suitability in these situations.

\begin{table}[h]
	\centering
	\caption{Comparisons among choices of $\mathbb{P}_0$ for 10 dimensional multivariate Gaussian when $n=5$.}
	\begin{tabular*}{0.85\textwidth}{@{}l @{\extracolsep{\fill}} *{4}{c} @{}}
		
		\cmidrule{1-5}
		& $\mathcal{D}_{data}(\mathbb{P}_0)$  & $\delta_\epsilon$ & $N$ for Extended SO & $N$ for Extended FAST \\
		\midrule
		$\mu_1 (\delta_{\hat{\theta}})$ & 37.9161   & $6.5766\times 10^{-5}$ & 285601 & 70221 \\
		$\mu_{2}$ & 11.0368  & $2.2454 \times 10^{-4}$ & 83649 & 20707\\
		$\mu_{3}$ & 14.7391  & $1.6850\times 10^{-4}$ &  111465 & 27528 \\
		\bottomrule\label{10dim1}
	\end{tabular*}
\end{table}
\begin{table}[h]
	\centering
	\caption{Comparisons among choices of $\mathbb{P}_0$ for 10 dimensional multivariate Gaussian when $n=10$.}
	\begin{tabular*}{0.85\textwidth}{@{}l @{\extracolsep{\fill}} *{4}{c} @{}}
		
		\cmidrule{1-5}
		& $\mathcal{D}_{data}(\mathbb{P}_0)$  & $\delta_\epsilon$ & $N$ for Extended SO & $N$ for Extended FAST \\
		\midrule
		$\mu_1 (\delta_{\hat{\theta}})$ & 5.2383   & $4.6857\times 10^{-4}$ & 40081 & 10026 \\
		$\mu_{2}$ & 3.3139  & $7.3298 \times 10^{-4}$ & 25621 & 6481 \\
		$\mu_{3}$ & 3.7926  & $6.4275\times 10^{-4}$ & 29219 & 7363 \\
		        \bottomrule\label{10dim2}
	\end{tabular*}
\end{table}

\section{Procedural Description}\label{fastsec}
This section presents our procedures to find solutions for CCP \eqref{main_problem} using SO-based methods, when the direct use of data $\xi_1,...,\xi_n$ from $\mathbb P$ is possibly insufficient to achieve feasibility with a given confidence. Algorithm \ref{esgnew}, which we call ``Extended SO", first presents the basic and most easily applicable procedure arising from Corollary \ref{para nonpara}. Notice that, given an overall target confidence level, say $c$, we have flexibility in choosing $\alpha$ and $\beta$ such that $\alpha+\beta=c$. In our experiments, we simply choose $\alpha=\beta=\frac{c}{2}$. However, if the required  confidence level is high, it is more beneficial to choose a relatively small $\beta$, since the required Monte Carlo sample size depends only logarithmically on $\beta$ (i.e., the required sample size for SO is of order $\log\frac{1}{\beta}$) \cite{campi2008exact}. On the other hand, as the confidence level $1-\alpha$ grows higher, the size of uncertainty set $\mathcal U_{data}$ would grow and cause the tolerance level $\epsilon$ for the SO (under the baseline $\mathbb P_0$) to decrease. Here, the dependence of Monte Carlo sample size on $\epsilon$ is less favorable, typically of order $\frac{1}{\epsilon}$ \cite{campi2008exact}.

\begin{algorithm}[H]
	\caption{\textit{Extended SO} to obtain a feasible solution $\hat x$ for \eqref{main_problem} with asymptotic confidence $1-\alpha-\beta$}\label{esgnew}
	\begin{algorithmic}[1]
% 		\Procedure{Extended\_``\textit{SO-method}''\_Center} {$n,\beta,\alpha,\epsilon,\phi,\mathcal P_{para} $} with insufficient samples $\xi_1,\xi_2,...,\xi_n$ from $\mathbb P$ such that $n<N_{exact}(\epsilon,\beta,d)$, the minimal data size needed for the SO-based method to achieve $\epsilon$-feasibility with $1-\beta$ confidence. Furthermore, $\alpha>0$ specifies a confidence level for uncertainty set, $\phi$ function denotes the divergence type for the uncertainty set and $\mathcal P_{para}=\{\mathbb P_{\theta}\}_{\theta\in\Theta\subset \mathbb R^D}$ is the parametric information.
        \State\textbf{Inputs:} data points $\xi_1,\ldots,\xi_n$, a $\phi$-divergence, parametric information $\mathcal P_{para}=\{\mathbb P_{\theta}\}_{\theta\in\Theta\subset \mathbb R^D}$.
		\State Find the MLE $\hat\theta$ from the data $\xi_1,\ldots,\xi_n$ for parameter $\theta$.
		\State Set $\lambda \gets \frac{\phi''(1)\chi^2_{1-\alpha,D}}{2n}$ where $\chi^2_{1-\alpha,D}$ is the $1-\alpha$ quantile of a $\chi^2_D$ distribution.
		\State Set $\delta_\epsilon\gets\epsilon'(\epsilon,\lambda,\phi)$ where $\epsilon'$ is defined in \eqref{adjusted tolerance}.
        \State Set $N\gets N_{exact}(\delta_\epsilon,\beta,d)$ where $N_{exact}$ is defined in \eqref{gamma_function1}.
		\State Generate $\xi^{MC}_1,...,\xi^{MC}_{N}$ from $\mathbb P_{\hat\theta}$ to construct \eqref{Monte_SG} and obtain a solution $\hat x$.
% 		\State Output $\hat x$.
% 	\EndProcedure
	\end{algorithmic}
\end{algorithm}

There are several variants of Algorithm \ref{esgnew}. First, we have discussed the use of plain SO and that the required sample size is \eqref{gamma_function1}, while on the other hand, as mentioned at the end of Section \ref{sec:sampling}, we can use other variants of SO such as FAST that requires a smaller sample size for either the data or the Monte Carlo samples we generate. In the case of FAST, we would have $N_{exact}(\epsilon,\beta,d)=20d+\frac{1}{\epsilon}\log \frac{1}{\beta}$, as suggested by \cite{care2014fast}. Thus, a variant of Algorithm \ref{esgnew} is to replace $N_{exact}$ with this latter quantity, and replace \eqref{Monte_SG} with the FAST procedure in \cite{care2014fast} for the last step of Algorithm \ref{esgnew} (we call this algorithm ``Extended FAST" which will also be used in the next section).

% is taken from \cite{jiang2016data} and we have introduced it at \eqref{adjusted tolerance}.
The explicit expression for $\epsilon'(\epsilon,\lambda,\phi)$ for different $\phi,\epsilon$ and $\lambda$ can be found in \cite{jiang2016data}. For example, if we choose $\phi=(x-1)^2$ which corresponds to the $\chi^2$-distance, then for $\epsilon<1/2$, we have $\epsilon'=\max\{0,\epsilon-\frac{\sqrt{\lambda^2+4\lambda(\epsilon-\epsilon^2)}-(1-2\epsilon)\lambda}{2\lambda+2}\}$. We can also replace $\epsilon'(\epsilon,\lambda,\phi)$ by any $\delta_\epsilon$ that achieves $M(\mathbb P_{\hat\theta},\{\mathbb Q:d_\phi(\mathbb P_{\hat\theta},\mathbb Q)\leq\lambda\},\delta_\epsilon)\leq\epsilon$. In Appendix \ref{relax}, we derive a self-contained easy upper bound for $M(\mathbb P_{\hat\theta},\{\mathbb Q:d_\phi(\mathbb P_{\hat\theta},\mathbb Q)\leq\lambda\},\delta)$ in the case of $\chi^2$-distance and use it to find such a $\delta_\epsilon$. This easy computation of $\delta_\epsilon$ will also be used in our numerics in the next section.

Section \ref{sec:reduction} has investigated some proposals to improve the generating distributions. Algorithm \ref{esgnew1} depicts these proposals in a general form. The main difference of Algorithm \ref{esgnew1} compared to Algorithm \ref{esgnew} is the introduction of $\mathcal D_{data}(\mathbb P_0,\phi)$ that one can attempt to minimize over a class of generating distribution $\mathbb P_0$ or evaluate for trial-and-error choices of $\mathbb P_0$, so that at the end we have $\mathcal D_{data}(\mathbb P_0,\phi)<\phi''(1)\chi^2_{1-\alpha,D}/(2n)$. As discussed in Section \ref{sec:reduction}, using this $\mathbb P_0$ allows us to obtain a smaller Monte Carlo size requirement than simple relaxation of the parametric constraint. Sections \ref{sec:line search} and \ref{sec:mix var} describe the possibilities of achieving such a reduction, in the case of Gaussian underlying distributions and using $\chi^2$-distance. Note that, just like in Algorithm \ref{esgnew}, we can consider other variants such as incorporating FAST and using alternate bounds for $M$ instead of $\epsilon'$, by undertaking the same modifications as in Algorithm \ref{esgnew}.

\begin{algorithm}[H]
	\caption{\textit{Extended SO with improved generating distribution} to obtain a feasible solution $\hat x$ for \eqref{main_problem} with asymptotic confidence $1-\alpha-\beta$}\label{esgnew1}
	\begin{algorithmic}[1]
% 		\Procedure{Extended\_``\textit{SO-method}''\_Center} {$n,\beta,\alpha,\epsilon,\phi,\mathcal P_{para} $} with insufficient samples $\xi_1,\xi_2,...,\xi_n$ from $\mathbb P$ such that $n<N_{exact}(\epsilon,\beta,d)$, the minimal data size needed for the SO-based method to achieve $\epsilon$-feasibility with $1-\beta$ confidence. Furthermore, $\alpha>0$ specifies a confidence level for uncertainty set, $\phi$ function denotes the divergence type for the uncertainty set and $\mathcal P_{para}=\{\mathbb P_{\theta}\}_{\theta\in\Theta\subset \mathbb R^D}$ is the parametric information.
        \State\textbf{Inputs:} data points $\xi_1,\ldots,\xi_n$, a $\phi$-divergence, parametric information $\mathcal P_{para}=\{\mathbb P_{\theta}\}_{\theta\in\Theta\subset \mathbb R^D}$.
		\State Find the MLE $\hat\theta$ from the data $\xi_1,\ldots,\xi_n$ for parameter $\theta$.
		\State Set $\lambda \gets \frac{\phi''(1)\chi^2_{1-\alpha,D}}{2n}$ where $\chi^2_{1-\alpha,D}$ is the $1-\alpha$ quantile of a $\chi^2_D$ distribution.
%         \State Set $\mathcal U_{data}=\{\theta:\chi^2(\mathbb P_{\hat\theta},\mathbb P_{\theta})\leq\lambda\}$ which can be written as \eqref{final1}.
%         \State\lambda \gets \frac{\phi''(1)\chi^2_{1-\alpha,D}}{2n}$ where $\chi^2_{1-\alpha,D}$ is the $1-\alpha$ quantile of a $\chi^2_D$ distribution.
        \State Obtain $\mathbb P_0$ by minimizing $\mathcal D_{data}(\mathbb P_0,\phi)$ defined in \eqref{transform para to nonpara} over a class of distributions or simple trial-and-error search so that $\mathcal D_{data}(\mathbb P_0,\phi)<\lambda$.
        \State Set $\delta_\epsilon\gets\epsilon'(\epsilon,\mathcal D_{data}(\mathbb P_0,\phi),\phi)$ where $\epsilon'$ is defined in \eqref{adjusted tolerance}.
        \State Set $N\gets N_{exact}(\delta_\epsilon,\beta,d)$ where $N_{exact}$ is defined in \eqref{gamma_function1}.
		\State Generate $\xi^{MC}_1,...,\xi^{MC}_{N}$ from $\mathbb P_0$ to construct \eqref{Monte_SG} and obtain a solution $\hat x$.
% 		\State Output $\hat x$.
% 	\EndProcedure
	\end{algorithmic}
\end{algorithm}

\section{Numerical Experiments}\label{sec:numerics}
This section presents some numerical examples to support our theoretical findings and illustrate the performance of our proposed procedures for data-driven CCPs. We focus on Algorithm \ref{esgnew} (Extended SO) and its FAST variant discussed in Section \ref{fastsec} (Extended FAST). We consider both single and joint CCPs (i.e., one and multiple inequalities respectively in the safety condition of the probability) as well as quadratic optimization problems. Moreover, we compare numerically with methods of robust optimization (RO) in \cite{convex_app,BEN:09}. The experimental outputs that we report include:
\begin{itemize}
% 	\item For every experiment in each problem, We obtain an optimal solution $\hat{x}$ by solving Extended\_SO in algorithm \ref{esgnew} combined with algorithm \ref{esgnew2} through results in Appendix \ref{relax}. In particular, we used $\chi^2$ distance.
	\item Under each setting, we repeat the experiment 1000 times with new data generated each time. For the solution $\hat{x}$ obtained in each trial from a given algorithm, we evaluate the violation probability $V(\hat{x},\mathbb{P})$ under the true probability measure $\mathbb{P}$ (under $\theta_{true}$) either through exact calculation or Monte Carlo simulation with sample size 10000. Moreover, using the empirical distribution for the violation probabilities, we report $\hat{\epsilon}$ as the average violation probability $V(\hat{x},\mathbb{P})$ as well as $Q_{95}$, the 95-percentile. Finally, we report and compare ``$f_{val}$'', the average objective value for the optimization problem across all 1000 runs.
	
	\item We fix $\alpha=0.05$ and $\beta=0.01$ across different values of $\epsilon$ and $d$. However, when we compare our methods with robust optimization approaches, we set $\alpha=0.05$ and $\beta=0.001$, since RO approaches essentially guarantee $\beta=0$. On the other hand, the sample size chosen for FAST is taken with default values $N_1=20d$ in stage 1 and $N_2=\frac{\log \beta-\log(B_\epsilon^{N_1,d})}{\log(1-\epsilon)}$ in stage 2 as discussed in \cite{care2014fast}.
	
	\item For given $\epsilon$ and $d$, we denote $N_{exact}$ as the required sample size if we can directly sample from $\mathbb{P}$ and use standard SO. We denote $n$ as the available data size ($n<N_{exact}$) and $N$ as the Monte Carlo size needed for the our DRO-based methods. In DRO-based methods, we fix our generating distribution $\mathbb{P}_0$ as $\mathbb{P}_{\hat{\theta}}$ and use the $\chi^2$-distance across the experiments.
% \item  (Extended SO or Extended SO with FAST with algorithm) given the real data size $n<N_{true}$.
\end{itemize}

\subsection{Single Linear Chance Constraint Problem}

We first consider a single linear CCP
\begin{equation}\label{single_ccp}
\begin{aligned}
& \underset{x \in \mathcal{X} \subseteq \mathbb{R}^d}{\text{min}}
& & c^Tx \\
& \quad\text{s.t.}
& & \mathbb{P}((a+\xi)^{T}x \leq b) \geq 1-\epsilon,  x\geq 0
\end{aligned}
\end{equation}
where $x\in\mathbb{R}^d$ is the decision variable, $a,c\in \mathbb{R}^d$ and $b\in \mathbb{R}$ are fixed and $\xi \in \mathbb{R}^d$ is a random vector following some parametric distribution. We fix  $a=[5,5,... ,5]\in \mathbb R^{d}$, $b=5$ and $c=[-1,-1,... ,-1]\in\mathbb R^d$ and the problem would have a non-empty feasible region with high probability for $\xi$ considered here. Moreover, a robustly feasible point for FAST \cite{care2014fast} is chosen to be $\bar{x}=\bold{0}\in \mathbb R^{d}$ and an explicit $\mathcal U_{data}$ is constructed as \eqref{parametric uncertainty1} for our DRO.
\subsubsection{ Multivariate Gaussian } We conduct experiments when $\xi\sim\mathcal N(\theta, \Sigma)$ with fixed but a priori randomly generated positive definite covariance matrix $\Sigma\in\mathbb R^{d\times d}$ and unknown $\theta\in\mathbb R^d$. Due to the normality of $\xi$, for any given $\theta$, we can reformulate the chance constraint exactly as a second-order cone constraint, which can be robustified straightforwardly in the ambiguous chance constraint case. The underlying true parameter is taken to be $\theta_{true}=\bold{0}\in \mathbb R^{d}$ and the results are summarized in Table \ref{single_results_gau} and \ref{single compare}. \captionsetup{skip=0.33\baselineskip}

% The following macro gets used three times in the body of the document
\newcommand\tabbody{%

	& \multicolumn{6}{c}{} \\
	\cmidrule{1-7}
	& $\epsilon=0.1$ & $\epsilon=0.1$ & $\epsilon=0.1$ & $\epsilon=0.05$ & $\epsilon=0.05$&$\epsilon=0.05$ \\
	& $d=5$ & $d=10$ & $d=20$ & $d=5$ & $d=10$&$d=20$ \\
	\midrule
	$n$ & 50 & 80& 200 & 50 & 80 &200\\
	$N_{exact}$ & 113 & 183 & 312 & 229 & 371 &631\\
	$N$ & 449 & 743 & 1016 &1443& 2349 &3118\\
	$\hat\epsilon$ & 0.0050 & 0.0041 & 0.0041 & 0.0015 & 0.0015 &0.0014\\
	$Q_{95}$& 0.0136 & 0.0103 & 0.0088 & 0.0045 & 0.0037 &0.0031\\
	$f_{val}$& -0.7577 & -0.7447 & -0.7360 & -0.7353 & -0.7243 &-0.7128\\

	\bottomrule}

\begin{table}[h]
	\centering
	\caption{Single linear CCP under Gaussian with unknown mean for different $\epsilon$ and $d$.}
	\begin{tabular*}{0.85\textwidth}{@{}l @{\extracolsep{\fill}} *{6}{c} @{}}
		\tabbody\label{single_results_gau}
	\end{tabular*}
	
\end{table}

\captionsetup{skip=0.66\baselineskip}

% The following macro gets used three times in the body of the document
\newcommand\tabbbody{%
	\toprule
% 	\cmidrule{1-3}\\
	& RO & Extended SO & Extended FAST  \\
	\midrule
	$n$ & 80 & 80& 80 &\\
	$N_{exact}$ & NA & 447 & 447 \\
	$N$ & NA & 2887 & 1079 \\
	$\hat{\epsilon}$ & 0.0180 & 0.0011  & 0.00069 \\
	$Q_{95}$ & 0.0272 & 0.0029  & 0.0019 \\
	$f_{val}$ & -0.8008
	& -0.7212 & -0.7093 \\

	\bottomrule}

\begin{table}[h]
	\centering
	\caption{Comparisons for single linear CCP under Gaussian: $\epsilon=0.05$, $d=10$ and $\beta=0.001$.}
	\begin{tabular*}{0.85\textwidth}{@{}l @{\extracolsep{\fill}} *{5}{c} @{}}
		\tabbbody\label{single compare}
	\end{tabular*}
	
\end{table}

\subsubsection{Exponential Distribution} We conduct experiment when each coordinate $\xi_i$ of  $\xi\in\mathbb R^d$ independently follows exponential distribution with rate $\lambda_i$. Since $\xi$ is no longer Gaussian and the domain of the moment generating moment function for exponential distribution depends on $\lambda=(\lambda_1,\ldots,\lambda_d)$, for convenience we use RO constructed from a convex approximation using Chebyshev's inequality:
\begin{equation*}
    \mathbb{P}_{\lambda}\Big(\xi^{T}x-\sum_{i=1}^d \frac{x_i}{\lambda_i} > \epsilon^{-1/2}\sqrt{Var(\xi^Tx)}\Big) \leq \epsilon
\end{equation*}
which, combined with $\mathcal U_{data}$ as in \eqref{parametric uncertainty1}, reduces the ambiguous chance constraint into a robust conic quadratic constraint
\begin{equation*}
    \epsilon^{-1/2}\sqrt{\sum_{i=1}^d(\frac{x_i}{\lambda_i})^2}+a^T x+ \epsilon^{-1/2}\sum_{i=1}^d\frac{x_i}{\lambda_i}-b\leq 0, \text{} \quad \forall \lambda:\sum_{i=1}^d(1-\frac{\lambda_i}{\hat\lambda_i})^2 \leq \frac{\chi^2_{1-\alpha,d}}{n},
\end{equation*}
The above can be tractably reformulated as in Section 5 of \cite{convex_app} on problems in the form of 5(b), with $\Omega=(\min_i(\hat\lambda_i) (1-\frac{\chi^2_{1-\alpha,d}}{n}))^{-1}$ where $\hat\lambda_i$ represents the MLE estimate of $\lambda_i$. Finally, the underlying true parameters are taken as $\lambda_i=1, \forall i$, and results are summarized in Table \ref{single compare_exp}.

\captionsetup{skip=0.66\baselineskip}

% The following macro gets used three times in the body of the document
\newcommand\tabokbody{%
	\toprule
% 	\cmidrule{1-3}\\
	& RO & Extended SO & Extended FAST  \\
	\midrule
	$n$ & 80 & 80& 80 &\\
	$N_{exact}$ & NA & 447 & 447 \\
	$N$ & NA & 2887 & 1079 \\
	$\hat{\epsilon}$ & 0.0045 & 0.0047  & 0.0016 \\
	$Q_{95}$ & 0.0094 & 0.0100  & 0.0050 \\
	$f_{val}$ & -0.6978
	& -0.6981 & -0.6701 \\

	\bottomrule}

\begin{table}[h]
	\centering
	\caption{Comparisons for single linear CCP under Exponential: $\epsilon=0.05$, $d=10$ and $\beta=0.001$.}
	\begin{tabular*}{0.85\textwidth}{@{}l @{\extracolsep{\fill}} *{5}{c} @{}}
		\tabokbody\label{single compare_exp}
	\end{tabular*}
	
\end{table}

From the results of the experiments, we can see the vast majority of solutions produced by three methods satisfy statistical feasibility. In fact, all methods are conservative with respect to the violation probability $\epsilon$, although some are more conservative than the other. In particular, when $\xi$ is  Gaussian, RO takes advantage of an exact formulation to produce less conservative solution with lower objective value (closer to the optimal value). This can be seen in Table 4, where $\hat \epsilon=0.018$ $f_{val}=-0.80$ for RO and $\hat \epsilon=0.0011$ $f_{val}=-0.72$ only for Extended SO. When $\xi$ is no longer Gaussian, RO appears to produce similar-quality solutions as Extended SO in terms of feasibility or optimality. For example in Table 5, we have $\hat \epsilon=0.0045$ $f_{val}=-0.6978$ for RO and $\hat \epsilon=0.0047$ $f_{val}=-0.6981$ for Extended SO. Note that while the validity of RO depends crucially on the applicability and accuracy of convex approximation, the validity of Extended SO or Extended FAST is not restricted by the distributions of $\xi$, and they also do not require intensive, case-specific analysis as RO. In general, we observe consistent performances of our methods in both experiments.

\subsection{Joint Linear Chance Constraint Problem}
Next, we consider a joint chance-constrained linear problem:
\begin{equation}\label{joint_ccp}
\begin{aligned}
& \underset{x \in \mathcal{X} \subseteq \mathbb{R}^d}{\text{min}}
& & c^Tx \\
& \quad\text{s.t.}
& & \mathbb{P}((A+\Xi) x \leq b) \geq 1-\epsilon,  x\geq 0
\end{aligned}
\end{equation}
where $x\in\mathbb{R}^d$ is the decision variable, $A \in \mathbb R^{m\times d},c\in \mathbb{R}^d$ and $b\in \mathbb{R}^{m}$ are fixed and $\Xi \in \mathbb{R}^{m\times d}$ is a random matrix following some parametric distribution. We set $c$, each row of $A$ and $b$ to be the same as in the single linear CCP. We treat $\Xi \in \mathbb R^{m\times d}$ as a matrix concatenated from a random vector $\xi\in\mathbb R^{md} \sim \mathcal N(\theta,\Sigma)$ with fixed but a priori randomly generated positive definite covariance matrix $\Sigma\in\mathbb R^{md\times md}$ and unknown $\theta\in\mathbb R^{md}$. To solve RO, we use Bonferroni's inequality as in \cite{nemirovski2007convex} to first divide the violation probability $\epsilon$ uniformly across $m$ individual chance constraints and then follow the procedure as in single linear CCP. The results are summarized in Table \ref{joint_compare_exp}.

\captionsetup{skip=0.66\baselineskip}

% The following macro gets used three times in the body of the document
\newcommand\taboksbbody{%
	\toprule
% 	\cmidrule{1-3}\\
	& RO & Extended SO & Extended FAST  \\
	\midrule
	$n$ & 80 & 80& 80 &\\
	$N_{exact}$ & NA & 291 & 291 \\
	$N$ & NA & 2388 & 1214 \\
	$\hat{\epsilon}$ & 0.0003 & 0.0012  & 0.0226 \\
	$Q_{95}$ & 0.0007 & 0.0033  & 0.0564 \\
	$f_{val}$ & -0.6448
	& -0.6626 & -0.6466 \\

	\bottomrule}

\begin{table}[h]
	\centering
	\caption{Comparisons for Joint linear CCP under Gaussian: $\epsilon=0.05$, $m=3$, $d=10$ and $\beta=0.001$.}
	\begin{tabular*}{0.85\textwidth}{@{}l @{\extracolsep{\fill}} *{5}{c} @{}}
		\taboksbbody\label{joint_compare_exp}
	\end{tabular*}
	
\end{table}

In this joint linear example, Extended FAST provides the least conservative solution in terms of the achieved tolerance level ($\hat \epsilon=0.0226$, which is closer to $0.05$, compared to $0.0003$ in RO and $0.0012$ in Extended SO), and Extended SO is the least conservative in terms of the objective value ($f_{val}=-0.6626$ compared to $-0.6448$ in RO and $-0.6466$ in Extended FAST). RO appears to be the most conservative in terms of both the achieved tolerance level and objective value. Note that this occurs even though the underlying randomness is Gaussian, which allows exact reformulation in the single chance constraint case. The conservative performance here is likely (and unsurprisingly) due to the crude Bonferroni's correction. Note that other alternatives to using Bonferroni, if one considers tractable reformulation, would be to use moment-based DRO where tractability can be achieved (e.g., \cite{xie2018deterministic}). However, it is unclear if using moment-based DRO would be more or less conservative than using Bonferroni correction along with exact reformulation for the individualized constraints, which could comprise an interesting comparison for a future study. Nonetheless, our Extended SO/FAST, being purely sampled-based, avoids the additional conservativeness coming from the Bonferroni correction. However, we note that a large number of Monte Carlo samples are required due to the large size of $\mathcal U_{data}$ in this high-dimensional problem.

\subsection{ Non-Linear Chance Constrained Problems}  In this section, we conduct numerical experiments for non-linear CCP. We consider two examples. First is a quadratic objective with joint linear chance constraints, and second is a linear objective with a quadratic chance constraint, similar as the QM problem considered in \cite{lejeune2016solving}.

\subsubsection{Quadratic Objective with Joint Linear Chance Constraint} We adopt the same setup (thus the robust feasibility condition remains the same) as in \eqref{joint_ccp} except we modify the objective with a quadratic term
\begin{equation}\label{qp_ccp}
\begin{aligned}
& \underset{x \in \mathcal{X} \subseteq \mathbb{R}^d}{\text{min}}
& & \frac{1}{2}x^T H x+c^Tx \\
& \quad\text{s.t.}
& & \mathbb{P}((A+\Xi) x \leq b) \geq 1-\epsilon,  x\geq 0
\end{aligned}
\end{equation}
for a fixed but a priori randomly generated positive definite matrix $H$.  We use $\epsilon=0.05$. Results are summarized in Table \ref{ccp_qp}. As we can see, feasibility in terms of violation probability is satisfied by all methods, though RO suffers from higher conservativeness compared to Extended SO/FAST in terms of the objective value ($f_{val}=-0.48$ compared to $-0.5547$ and $-0.5476$ for Extended SO and FAST respectively). Like the previous example, this could be attributed to the Bonferroni correction used in the RO. Extended FAST gives the least conservative solution in terms of the tolerance level ($\hat \epsilon=0.0096$), using only one third of the samples compared to Extended SO (3888 vs 1384). On the other hand, Extended SO gives the least conservative solution in terms of the objective value ($f_{val}=-0.5547$).

\captionsetup{skip=0.66\baselineskip}

% The following macro gets used three times in the body of the document
\newcommand\taboksbsdbody{%
	\toprule
% 	\cmidrule{1-3}\\
	& RO & Extended SO & Extended FAST  \\
	\midrule
	$n$ & 200 & 200& 200 &\\
	$N_{exact}$ & NA & 447 & 447 \\
	$N$ & NA & 3888 & 1384 \\
	$\hat{\epsilon}$ & 0 & 0.0006  & 0.0096 \\
	$Q_{95}$ & 0 & 0.0017  & 0.0253 \\
	$f_{val}$ & -0.4800
	& -0.5547 & -0.5476 \\

	\bottomrule}

\begin{table}[h]
	\centering
	\caption{Comparisons for quadratic objective with joint linear chance constraint under Gaussian: $\epsilon=0.05$, $m=5$, $d=10$ and $\beta=0.001$.}
	\begin{tabular*}{0.85\textwidth}{@{}l @{\extracolsep{\fill}} *{5}{c} @{}}
		\taboksbsdbody\label{ccp_qp}
	\end{tabular*}
	
\end{table}

\subsubsection{Linear Objective with Quadratic Chance Constraint} We consider the following setup:
\begin{equation}\label{qm_ccp}
\begin{aligned}
& \underset{x \in \mathcal{X} \subseteq \mathbb{R}^d}{\text{min}}
& & c^Tx \\
& \quad\text{s.t.}
& & \mathbb{P}(x^T\Xi x +a^Tx \leq b) \geq 1-\epsilon,  x\geq 0
\end{aligned}
\end{equation}
We set  $\Xi=\frac{1}{m}\sum_{i=1}^{m}\xi_i \xi_i^T$ and $\xi_i\in\mathbb R^{d}\sim\mathcal N(\theta,\Sigma)$ are i.i.d. with unknown $\theta$. We set $\theta_{true}=0\in\mathbb R^d$ and consequently $m\Xi$ follows a Wishart distribution $\mathcal W(\Sigma, m)$ with $m$ degrees of freedom and covariance matrix $\Sigma$ under $\mathbb P$. We use $\epsilon=0.05$. The RO formulation for this problem is not readily available while our sampling-based methods are still directly applicable. We thus focus on evaluating the performance of Extended FAST under different hyper-parameters. The results are summarized in Table \ref{LPQCC}. As we can see, the high dimensions of the problem do not affect the sample size requirement of Extended FAST dramatically, as it increases moderately form $N=154$ when $d=5$ to $N=334$ when $d=10$ and to $N=422$ when $d=15$. Moreover, the average optimal value $f_{val}$ is around $-0.85$ and feasibility is satisfied ($\hat\epsilon$ all within $0.05$), showing the consistent effectiveness of our method.

\captionsetup{skip=0.33\baselineskip}

% The following macro gets used three times in the body of the document
\newcommand\tabfinalbody{%

	& \multicolumn{3}{c}{} \\
	\cmidrule{1-4}
	& $\epsilon=0.1, d=5,m=5$ & $\epsilon=0.05, d=10, m=10$ &$\epsilon=0.05,d=15,m=15$ \\
	\midrule
	$n$ & 80 & 200 & 300  \\
	$N_{exact}$ & 113 & 371  & 504   \\
	$N$ & 154 & 334 &  422 \\
	$\hat\epsilon$ & 0.0092 & 0.0050 & 0.0048 \\
	$Q_{95}$& 0.0263 & 0.0133 & 0.0128 \\
	$f_{val}$& -0.8393 & -0.8576 & -0.8672 \\

	\bottomrule}

\begin{table}[h]
	\centering
	\caption{Linear objective with quadratic chance constraint for different $\epsilon$, $m$ and $d$.}
	\begin{tabular*}{0.85\textwidth}{@{}l @{\extracolsep{\fill}} *{3}{c} @{}}
		\tabfinalbody\label{LPQCC}
	\end{tabular*}
	
\end{table}

\section{Conclusion}
\label{sec:conclusion}

We consider data-driven chance constrained problems with limited data. In such situation, standard approaches in SO may not be able to generate statistically feasible solutions. We investigate an approach that uses divergence-based DRO to efficiently incorporate parametric information through a data-driven uncertainty set, and subsequently uses Monte Carlo sampling to generate enough samples to handle the distributionally robust chance constraint. In this way our framework translates the data size requirement in SO into a Monte Carlo requirement, the latter could be much more abundant thanks to cheap modern computational power.

To exploit the full capability of our framework, we have investigated the optimality of the generating distribution in drawing the Monte Carlo samples in the sense of minimizing its required sample size. We have shown that, while the optimal choice is the baseline distribution in the unambiguous and nonparametric DRO cases, this natural choice can be dominated by other distributions in the parametric DRO case. We proved this by connecting the Neyman-Pearson lemma in statistical hypothesis testing to DRO and SO, which comprises the first such results of its kind as far as we know. We then studied several ways to find better generating distributions by searching for mixtures that enhance distributional variability. Lastly, we showed some numerical results to demonstrate how our approach can give rise to feasible solutions in a wide range of settings where other methods such as RO cannot be utilized directly or give more conservative solutions.
% , the effectiveness and generality of our approach.
% Moseveral aspects that Our approach makes use of a distributionally robust optimization (DRO) formulation that translates the data size requirement into a Monte Carlo sample size requirement drawn from what we call a generating distribution. We show that, while the optimal choice of this generating distribution is the one eliciting the data or the baseline distribution in a nonparametric divergence-based DRO, it is not necessarily so in the parametric case. Correspondingly, we develop procedures to obtain generating distributions that improve upon these basic choices. We support our findings with several numerical examples.

% to demonstrate the results.
% We summarize the results in Table 7.

\begin{acks}
We gratefully acknowledge support from the National Science Foundation under grants CMMI-1542020, CAREER CMMI-1653339/1834710 and IIS-1849280.
% A preliminary conference version of this work has appeared in \cite{lam2018sampling}.
\end{acks}

% The acknowledgments section is defined using the "acks" environment (and NOT an unnumbered section). This ensures
% the proper identification of the section in the article metadata, and the consistent spelling of the heading.

%
% The next two lines define the bibliography style to be used, and the bibliography file.
\bibliographystyle{ACM-Reference-Format}
\bibliography{sample-base}

%
% If your work has an appendix, this is the place to put it.
\appendix
\section{Regularity Conditions to Verify Assumption \ref{con_and_nor} }\label{sec:conditions}

We consider the following conditions:

(C1) $p(x,\theta_1)=p(x,\theta_2)$ for all $x$ implies $\theta_1=\theta_2$.

(C2) $\theta_{true}$ is an inner point of $\Theta\subseteq \mathbb R^D$.

(C3) The support of distribution  $\{x:p(x,\theta)>0\}$ does not depend on $\theta$.

(C4) There exists a measurable function $L_1(x)$ such that $\mathbb E_{\theta_{true}}L_1^2<\infty$ and
\begin{equation}
    |\log p(x,\theta_1)-\log p(x,\theta_2)| \leq L_1(x) \|\theta_1-\theta_2\|_2
\end{equation}
for all $\theta_1,\theta_2$ in a neighborhood of $\theta_{true}$.

(C5) $I(\theta_{true})$ is non-singular.

(C6) The density family $\{{\mathbb P}_{\theta}\}_{\theta\in\Theta}$ is differentiable in quadratic mean at $\theta_{true}$, i.e., there exists a measurable function $L_2(x) : \mathcal X \rightarrow \mathbb R^D$ such that for any $h\in\mathbb R^D$ that converges to 0,

\begin{equation}
    \int \big(\sqrt{p(x,\theta_{true}+h)}-\sqrt{p(x,\theta_{true})}-\frac{1}{2} h^T L_2(x)\sqrt{p(x,\theta_{true})}\big)^2 dx=o(\|h\|_2^2).
\end{equation}

The consistency and asymptotic normality of MLE in Assumption \ref{con_and_nor} is guaranteed under conditions (C1)-(C6). See \cite{van2000asymptotic,lehmann2004elements}.

\section{Alternate Bounds Using $\chi^2$ Distance}\label{relax}
%  To provide a first intuition for using $\chi^2$ distance, we introduce the following derivation.
Consider the $\chi^2$-based uncertainty set $\mathcal U_{data}=\{\mathbb Q \in \mathcal P_{para}:\chi^2(\mathbb P_{\hat\theta},\mathbb Q)\leq \lambda\}$. Here we provide an alternate upper bound for the function $M(\mathbb P_0,\mathcal U_{data},\delta)$, which we call $\tilde M(\mathbb P_0,\mathcal U_{data},\delta)$. That is, we find $\tilde M(\mathbb P_0,\mathcal U_{data},\delta)$ that satisfies
\begin{equation*}
\sup\limits_{\mathbb Q\in\mathcal U_{data}}\mathbb Q(\xi\in A)\leq \tilde M(\mathbb P_0,\mathcal U_{data},\delta), \quad \text{for all $A$ such that $\mathbb P_0(A) \leq \delta$}.
\end{equation*}
For any $\mathbb Q$ absolutely continuous with respect to $\mathbb P_0$, we have
\begin{align}\label{Mbound}
\sup\limits_{\mathbb Q\in\mathcal{U}_{data}} \mathbb Q(\xi \in {A})&=\mathbb{P}_{0}(\xi\in {A})+\Big(\sup\limits_{\mathbb Q\in\mathcal{U}_{data}} \mathbb{Q}(\xi \in {A})-\mathbb{P}_{0}(\xi\in {A})\Big)\nonumber\\
&=\mathbb{P}_{0}(\xi\in {A})+\sup\limits_{\mathbb Q\in\mathcal{U}_{data}}\int\mathbf 1\{\xi\in A\} \big(\frac{d\mathbb{Q}}{d\mathbb P_0}-1\big) d\mathbb{P}_0(\xi)\nonumber\\
&\leq\mathbb{P}_{0}(\xi\in {A})+\sup\limits_{\mathbb Q\in\mathcal{U}_{data}}\Big(\int\mathbf{1}\{\xi \in {A}\} d\mathbb P_0(\xi) \Big)^{1/2} \cdot \Big(\int\big(\frac{d\mathbb Q}{d\mathbb P_0}-1\big)^2 d\mathbb P_0(\xi)\Big)^{1/2}\nonumber\\
& \leq \delta+\delta^{1/2}\cdot (\sup\limits_{\mathbb Q\in\mathcal{U}_{data}} \chi^2(\mathbb{P}_0,\mathbb{Q}))^{1/2},
\end{align}
where the fourth line follows from the Cauchy-Schwarz inequality. Thus, we can set
\begin{equation*}
\tilde M(\mathbb P_0,\mathcal U_{data},\delta)=\delta+\delta^{1/2}\cdot (\sup\limits_{\mathbb Q\in\mathcal{U}_{data}} \chi^2(\mathbb{P}_0,\mathbb{Q}))^{1/2}=\delta+\delta^{1/2}\cdot (\mathcal D_{data}(\mathbb P_0))^{1/2},\label{M choice}
\end{equation*}
which is non-decreasing in $\delta$. By \eqref{interim beta}, we can choose $\delta_{\epsilon}$ such that $\delta_{\epsilon}+\delta_{\epsilon}^{1/2}\left(\mathcal D_{data}(\mathbb P_0)\right)^{1/2}\leq\epsilon$, or equivalently,
\begin{equation}
\delta_{\epsilon}\leq\epsilon+\frac{\mathcal D_{data}(\mathbb P_0)}{2}-\sqrt{\epsilon\cdot\mathcal D_{data}(\mathbb P_0)+\frac{1}{4}(\mathcal D_{data}(\mathbb P_0))^2},\label{chi-square formula}
\end{equation}
by solving the quadratic equation. In the case where we relax the parametric constraint completely, we have $\mathcal D_{data}(\mathbb P_0)=\lambda$.
% In fact, using strong duality results from \cite{jiang2016data}, we are able to derive the optimal bound using just $\chi^2$ distance as the constriant on uncertainty sets, implying:
% \begin{equation}\label{opt}
% \delta_\epsilon \leq \epsilon-\frac{\sqrt{(\mathcal D_{data}(\mathbb P_0))^2+4\mathcal D_{data}(\mathbb P_0)(\epsilon-\epsilon^2)}-(1-2\epsilon)\mathcal D_{data}(\mathbb P_0)}{2\mathcal D_{data}(\mathbb P_0)+2}.
% \end{equation}
% \begin{remark}  We note that the even though \eqref{opt} is the optimal bound derived  based only on $\chi^2$ distances, it can be shown that the intuitive bound derived in \eqref{chi-square formula} using change of measure actually as \eqref{opt}
Compared to the bound obtained from Theorem \ref{inp2} and Corollary \ref{para nonpara}, \eqref{chi-square formula} is less tight, but the gap can be shown to asymptotically vanish when $\epsilon, \frac{\chi^2_{1-\alpha,D}}{n} \rightarrow 0$.
% \end{remark}
% Now, we may use \eqref{opt} to solve algorithm \ref{esgnew} without relying on results from \cite{jiang2016data}.

%For example, if $d_\phi$ is the $\chi^2$ distance,
%for any $ A \subset  \mathcal Y$:
%\begin{eqnarray}\label{jiangresult}
%\underset{\chi^2(\mathbb P_{\hat\theta},
%	\mathbb Q) \leq \lambda}{\text{max}} \mathbb Q( A) \leq \epsilon \iff %\mathbb P_{\hat\theta} ( A) %\leq\epsilon-\frac{\sqrt{\lambda^2+4\lambda(\epsilon-\epsilon^2)}-(1-2\e%psilon)\lambda}{2\lambda+2}.
%\end{eqnarray}

\section{Proofs and Other Technical Results}\label{sec:proofs}

% \subsection{Proof of Lemma \ref{convexity}}
\begin{proof}[Proof of Lemma \ref{convexity}]
	
% 	We want to show that the right hand side of \eqref{minimax} is a convex optimization problem. First,
First, by definition $\mathcal{P}(\Theta)$ is a convex set and, for any $\mu_1,\mu_2 \in \mathcal{P}(\Theta)$ and $0 < t < 1$, we have
	\begin{equation*}
	(1-t)\mu_1+t\mu_2 \in \mathcal P(\Theta).
	\end{equation*}
	Next, fixing $\theta\in\mathcal{U}_{data}$, the function $L(\cdot,\theta)$ is convex since:
	\begin{align*}
	L((1-t)\mu_1+t\mu_2,\theta)=&\int_{\mathcal{Y}}\frac{(p(y; \theta))^2}{\int_{\Theta} p(y;\theta') ((1-t)\mu_1+t\mu_2)(d\theta')} dy \nonumber\\
	=&\int_{\mathcal{Y}}\frac{(p(y; \theta))^2}{(1-t)\int_{\Theta} p(y;\theta') \mu_1(d\theta')+t\int_{\Theta} p(y;\theta') \mu_2(d\theta')} dy \nonumber\\
	\leq & (1-t)\int_{\mathcal{Y}}\frac{(p(y; \theta))^2}{\int_{\Theta} p(y;\theta') \mu_1(d\theta')}dy+t\int_{\mathcal{Y}}\frac{(p(y; \theta))^2}{\int_{\Theta} p(y;\theta') \mu_2(d\theta')}dy \nonumber\\
	=& (1-t)L(\mu_1,\theta)+tL(\mu_2,\theta)
	\end{align*}
	for any $0<t<1$ where the inequality follows from the convexity of the function $1/x$ for $x>0$. Thus, as the supremum of convex functions, $
	l(\mu)\triangleq\sup\limits_{\theta\in\mathcal{U}_{data}}L(\mu,\theta)$ is also convex.
\end{proof}

% \subsection{Proof of Theorem \ref{dktheorem}}\label{appdk}
We provide a version of Danskin' Theorem needed to prove Theorem \ref{ana}. Alternately, one can also resort to a generalized version in \cite{clarke1975generalized} by verifying the conditions there. Here we opt for the former and provide a self-contained proof, which mostly relies on the techniques from Proposition 4.5.1 of \cite{bertsekas2003convex} but with some slight modification to handle issues regarding the domain of the involved function. We have:
% as the domain $t$ of $L((1-t_k)\mu_1+t_k\mu_2+t(\mu_2+\mu_1),\theta)$depicted in the proof below is different for each $0\leq t_k\leq 1$. First we have:
% For the sake of completeness, we include a simplified, self contained proof here.

\begin{lemma}\label{dir}
	Fix probability measures $\mu_1,\mu_2 \in\mathcal P(\Theta)$. Suppose $t_k \downarrow 0$ is a positive sequence such that $(1-t_k)\mu_1+t_k\mu_2\in \mathcal P(\Theta)$ for all $k$ and  $\theta_k\in\Theta^*((1-t_k)\mu_1+t_k\mu_2)$ is a sequence such that $\theta_k\rightarrow\theta_0$ for some $\theta_0 \in \mathcal U_{data}$, then we have
	\begin{align*}
	\limsup_{k \rightarrow \infty}\frac{{L}((1-t_k)\mu_1+t_k\mu_2, \theta_k )-{L}(\mu_1,\theta_k)}{t_k} \leq   \lim_{t\downarrow 0} \frac{L((1-t)\mu_1+t\mu_2,\theta_0)-L(\mu_1,\theta_0)}{t},
	\end{align*}
	if we assume $L((1-t)\mu_1+t\mu_2,\theta)$ is jointly continuous in $0\leq t \leq 1$ and $\theta \in \Theta$.
\end{lemma}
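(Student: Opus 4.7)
The plan is to exploit the convexity of $L(\cdot,\theta)$ in its first argument, established in Lemma \ref{convexity}, together with the joint continuity assumption. Convexity implies that the difference quotient $t \mapsto [L((1-t)\mu_1+t\mu_2,\theta) - L(\mu_1,\theta)]/t$ is non-decreasing in $t$ for every fixed $\theta$, which is a standard and crucial monotonicity property for Danskin-type arguments.

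First I would fix any $s > 0$ (to be sent to $0$ later) with $(1-s)\mu_1 + s\mu_2 \in \mathcal{P}(\Theta)$, and note that for all $k$ large enough we have $0 < t_k < s$. Then by the monotonicity of the difference quotient, for each such $k$,
\begin{equation*}
\frac{L((1-t_k)\mu_1+t_k\mu_2,\theta_k) - L(\mu_1,\theta_k)}{t_k} \leq \frac{L((1-s)\mu_1+s\mu_2,\theta_k) - L(\mu_1,\theta_k)}{s}.
\end{equation*}

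Next I would take the $\limsup$ as $k \to \infty$ on both sides. Since $\theta_k \to \theta_0$ and $L((1-t)\mu_1+t\mu_2, \theta)$ is jointly continuous in $(t,\theta)$, both $L((1-s)\mu_1+s\mu_2, \theta_k) \to L((1-s)\mu_1+s\mu_2, \theta_0)$ and $L(\mu_1,\theta_k) \to L(\mu_1,\theta_0)$ as $k \to \infty$. Thus,
\begin{equation*}
\limsup_{k\to\infty} \frac{L((1-t_k)\mu_1+t_k\mu_2,\theta_k) - L(\mu_1,\theta_k)}{t_k} \leq \frac{L((1-s)\mu_1+s\mu_2,\theta_0) - L(\mu_1,\theta_0)}{s}.
\end{equation*}

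Finally, letting $s \downarrow 0$, the right-hand side converges to $\lim_{t\downarrow 0}[L((1-t)\mu_1+t\mu_2,\theta_0) - L(\mu_1,\theta_0)]/t$, which exists (possibly as $-\infty$) by the monotonicity of the difference quotient in $s$. The main obstacle I anticipate is justifying that the one-sided derivative on the right-hand side is well-defined and that one may freely pass to the infimum over $s$; this is handled cleanly by the convexity of $L(\cdot,\theta_0)$ in the first argument along the line segment joining $\mu_1$ and $\mu_2$, which guarantees that the difference quotient is monotone non-decreasing in $s$ and hence has a well-defined (possibly $-\infty$) limit as $s \downarrow 0$ that coincides with $\inf_{s>0}$ of the quotient. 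Care must also be taken that $(1-t)\mu_1 + t\mu_2 \in \mathcal{P}(\Theta)$ for all small $t$, which follows from convexity of $\mathcal{P}(\Theta)$, so the quantities in the displayed inequalities are well-defined.
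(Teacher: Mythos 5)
Your proof is correct and follows essentially the same route as the paper's: convexity of $L(\cdot,\theta)$ along the segment gives monotone difference quotients, joint continuity lets you pass $\theta_k\to\theta_0$ at a fixed $s$, and then $s\downarrow 0$ yields the one-sided derivative. The only difference is organizational — you compare chord slopes based at $\mu_1$ directly, whereas the paper routes through $f_k^-(0)\leq f_k^+(0)$ at the perturbed point and a $\tau$--$\eta$ argument — but the underlying ideas are identical and your version is, if anything, slightly cleaner.
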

\begin{proof}
	It is known that if $f:\mathbb{I}\rightarrow\mathbb{R}$ is a convex function with $\mathbb{I}$ being an open interval containing some point $x$, we then have the following results \cite{bertsekas2003convex}:
	\begin{equation}\label{plus0}
	f^{+}(x)=\lim_{t\downarrow 0} \frac{f(x+t)-f(x)}{t}=\inf_{t>0}\frac{f(x+t)-f(x)}{t},
	\end{equation}
	\begin{equation}\label{plus1}
	f^{-}(x)=\lim_{t\downarrow 0} \frac{f(x)-f(x-t)}{t}=\sup_{t>0}\frac{f(x)-f(x-t)}{t},
	\end{equation}
	and
	\begin{equation}\label{plus2}
	f^{+}(x) \geq f^{-}(x).
	\end{equation}
	In other words, these limits exist and satisfy the above relations for convex functions. Thus, if we define $f_k(t)={L}((1-t_k)\mu_1+t_k\mu_2+t(\mu_2-\mu_1),\theta_k)$, it follows from the convexity of $\mathcal P(\Theta)$ and ${L}(\cdot,\theta_k)$ that $f_k(t)$ is convex and well-defined for $-t_k \leq t \leq 1-t_k$. Using the above results in \eqref{plus0}, \eqref{plus1} and \eqref{plus2}, we then have
	\begin{align}\label{part1}
	\frac{{L}((1-t_k)\mu_1+t_k\mu_2, \theta_k )-{L}(\mu_1,\theta_k)}{t_k}
	=&\frac{f_k(0)-f_k(-t_k)}{t_k}\nonumber\\
	\leq &\sup_{t>0} \frac{f_k(0)-f_k(-t)}{t}
	= f^{-}_k(0)
	\leq  f^{+}_k(0)
	=\inf_{t>0}\frac{f_k(t)-f_k(0)}{t}.
	\end{align}
	On the other hand, if we define $f_0(t)=L((1-t)\mu_1+t\mu_2,\theta_0)$, it also follows that $f_0(t)$ is convex and well-defined for $0\leq t \leq 1$. It follows from the convexity of $f_0(\cdot)$ as well as \eqref{plus0} that
	\begin{align}\label{part1.5}
	\lim_{t\downarrow 0} \frac{L((1-t)\mu_1+t\mu_2,\theta_0)-L(\mu_1,\theta_0)}{t}=&\lim_{t\downarrow 0} \frac{f_0(t)-f_0(0)}{t}\nonumber\\
	=&\inf_{t>0}\frac{f_0(t)-f_0(0)}{t}=f^+_0(0).
	\end{align}
	Then, it again follows from the convexity of $f_0(\cdot)$ that, given any $\tau>0$, we can find some $\eta>0$ such that
	\begin{equation}\label{part2}
	\frac{f_0(s)-f_0(0)}{s} \leq f^+_0(0)+\tau,
	\end{equation}
	for all $0<s<\eta$. It then follows from definitions and \eqref{part2} that
	\begin{align}
	\frac{L((1-s)\mu_1+s\mu_2,\theta_0)-L(\mu_1,\theta_0)}{s}
	=&\frac{L((\mu_1+s(\mu_2-\mu_1),\theta_0)-L(\mu_1,\theta_0)}{s}\nonumber\\
	=&\frac{f_0(s)-f_0(0)}{s}\leq f^+_0(0)+\tau,
	\end{align}
	for all $0<s<\eta$. Fixing one such $s$, since the function $L((1-t)\mu_1+t\mu_2,\theta)$ is jointly continuous in $0\leq t \leq 1$ and $\theta \in \Theta$, and the sequence satisfies $\theta_k\rightarrow\theta_0$, we have
	\begin{align*}
	\lim_{k\to\infty}\frac{f_k(s)-f_k(0)}{s}=&\lim_{k\to\infty}\frac{{L}((1-t_k)\mu_1+t_k\mu_2+s(\mu_2-\mu_1),\theta_k)-L((1-t_k)\mu_1+t_k\mu_2,\theta_k)}{s} \nonumber\\
	=& \frac{L((\mu_1+s(\mu_2-\mu_1),\theta_0)-L(\mu_1,\theta_0)}{s}=\frac{f_0(s)-f_0(0)}{s}\leq f^+_0(0)+2\tau,
	\end{align*}
	as long as we make $\eta>s>0$ small enough so that $\eta\leq 1-t_k$ for all $k$. Then, for $k$ large enough, we have
	\begin{equation}\label{part3}
	\inf_{t>0}\frac{f_k(t)-f_k(0)}{t} \leq \frac{f_k(s)-f_k(0)}{s} \leq f^+_0(0)+2\tau.
	\end{equation}
	Combining \eqref{part1}, \eqref{part1.5} and \eqref{part3}, we have that, for $k$ large enough,
	\begin{equation*}
	\frac{{L}((1-t_k)\mu_1+t_k\mu_2, \theta_k )-{L}(\mu_1,\theta_k)}{t_k} \leq \lim_{t\downarrow 0} \frac{L((1-t)\mu_1+t\mu_2,\theta_0)-L(\mu_1,\theta_0)}{t}+2\tau.
	\end{equation*}
	Finally, since $\tau$ is arbitrary, we conclude that
	\begin{align*}
	\limsup_{k \rightarrow \infty}\frac{{L}((1-t_k)\mu_1+t_k\mu_2, \theta_k )-{L}(\mu_1,\theta_k)}{t_k} \leq   \lim_{t\downarrow 0} \frac{L((1-t)\mu_1+t\mu_2,\theta_0)-L(\mu_1,\theta_0)}{t}.
	\end{align*}
\end{proof}

We now prove the following version of Danskins' Theorem:
% [Proof of Theorem \ref{dktheorem}]
\begin{theorem}\label{dktheorem}
	 Fix $\mu_1,\mu_2 \in \mathcal{P}(\mathcal{U}_{data})$. Suppose $t_k \downarrow 0$ is a positive sequence such that $(1-t_k)\mu_1+t_k\mu_2\in \mathcal P(\Theta)$ for all $k$ and  $L((1-t)\mu_1+t\mu_2,\theta)$ is jointly continuous in $0\leq t \leq 1$ and $\theta \in \Theta$. Then, if we let ${\psi}(t)={l}((1-t)\mu_1+t\mu_2)$ for $0\leq t \leq 1$ with ${l}(\cdot)=\sup
	\limits_{\theta\in\mathcal{U}_{data}}L(\cdot,\theta)$ defined as \eqref{def1}, we have
	\begin{align}\label{thedn}
	{\psi}^{+}(0)=&\sup_{\theta\in \Theta^*(\mu_1)}\lim_{t\downarrow 0} \frac{{L}((1-t)\mu_1+t\mu_2,\theta)-{L}(\mu_1,\theta)}{t}
	\end{align}
\end{theorem}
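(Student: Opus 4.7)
The plan is to prove the two inequalities that together give the claimed equality for $\psi^+(0)$. First, by the convexity of $l(\cdot)$ established in Lemma \ref{convexity} and the fact that $(1-t)\mu_1+t\mu_2$ is an affine path in $\mathcal P(\Theta)$, the function $\psi(t)$ is convex on $[0,1]$, so $\psi^+(0)$ exists and equals $\inf_{t>0}(\psi(t)-\psi(0))/t$.

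For the lower bound, I would fix any $\theta\in\Theta^*(\mu_1)$ and observe that $\psi(t)=l((1-t)\mu_1+t\mu_2)\geq L((1-t)\mu_1+t\mu_2,\theta)$ for every $t\in[0,1]$, while $\psi(0)=l(\mu_1)=L(\mu_1,\theta)$ by the defining property of $\Theta^*(\mu_1)$. Dividing by $t>0$ and sending $t\downarrow 0$ gives
\begin{equation*}
\psi^+(0)\ \geq\ \lim_{t\downarrow 0}\frac{L((1-t)\mu_1+t\mu_2,\theta)-L(\mu_1,\theta)}{t},
\end{equation*}
where the limit on the right exists by convexity of $t\mapsto L((1-t)\mu_1+t\mu_2,\theta)$ (Lemma \ref{convexity} applied pointwise in $\theta$). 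Taking the supremum over $\theta\in\Theta^*(\mu_1)$ yields one direction.

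For the upper bound, I would take any positive sequence $t_k\downarrow 0$ (with $(1-t_k)\mu_1+t_k\mu_2\in\mathcal P(\Theta)$ as assumed) and select $\theta_k\in\Theta^*((1-t_k)\mu_1+t_k\mu_2)$. Since $\mathcal U_{data}$ is compact, by passing to a subsequence (without relabelling) we may assume $\theta_k\to\theta_0$ for some $\theta_0\in\mathcal U_{data}$. The main subtlety is to verify $\theta_0\in\Theta^*(\mu_1)$: for any $\theta\in\mathcal U_{data}$, the joint continuity of $L$ yields
\begin{equation*}
L(\mu_1,\theta_0)=\lim_{k\to\infty}L((1-t_k)\mu_1+t_k\mu_2,\theta_k)\geq\lim_{k\to\infty}L((1-t_k)\mu_1+t_k\mu_2,\theta)=L(\mu_1,\theta),
\end{equation*}
so $\theta_0$ is indeed a maximizer at $\mu_1$. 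Applying Lemma \ref{dir} along this subsequence then gives
\begin{equation*}
\limsup_{k\to\infty}\frac{L((1-t_k)\mu_1+t_k\mu_2,\theta_k)-L(\mu_1,\theta_k)}{t_k}\leq\lim_{t\downarrow 0}\frac{L((1-t)\mu_1+t\mu_2,\theta_0)-L(\mu_1,\theta_0)}{t}.
\end{equation*}
Combining this with $\psi(t_k)-\psi(0)=L((1-t_k)\mu_1+t_k\mu_2,\theta_k)-L(\mu_1,\theta_k)+[L(\mu_1,\theta_k)-L(\mu_1,\theta_0)]$, where the bracketed remainder tends to zero by continuity (and in fact is non-positive since $\theta_0$ maximizes $L(\mu_1,\cdot)$), bounds $\psi^+(0)$ from above by the right-hand side of \eqref{thedn}, completing the proof.

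The main obstacle is the identification of the subsequential limit $\theta_0$ as an element of $\Theta^*(\mu_1)$, which requires carefully exploiting joint continuity together with optimality of $\theta_k$ at the perturbed measure; everything else is a straightforward packaging of Lemma \ref{dir} and the convexity of $\psi$.
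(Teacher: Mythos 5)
Your proposal is correct and follows essentially the same route as the paper's proof: the lower bound by evaluating $L$ at a fixed maximizer $\theta\in\Theta^*(\mu_1)$, and the upper bound by extracting a convergent subsequence of maximizers $\theta_k$, identifying the limit $\theta_0$ as an element of $\Theta^*(\mu_1)$ via joint continuity, and invoking Lemma \ref{dir} after dropping the non-positive remainder $L(\mu_1,\theta_k)-L(\mu_1,\theta_0)$. The only cosmetic difference is that the paper compares $\theta_0$ against a chosen $\tilde\theta_0\in\Theta^*(\mu_1)$ rather than against an arbitrary $\theta\in\mathcal U_{data}$, which is immaterial.
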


\begin{proof}
	For any $\theta_0\in\Theta^*(\mu_1)$ and $\theta_t\in\Theta^*((1-t)\mu_1+t\mu_2)$, we have
	\begin{align*}
	\frac{{\psi}(t)-{\psi}(0)}{t} = \frac{{l}((1-t)\mu_1+t\mu_2)-{l}(\mu_1)}{t}
	=&\frac{{L}((1-t)\mu_1+t\mu_2,\theta_t)-\tilde{L}(\mu_1,\theta_0)}{t}\nonumber\\
	\geq& \frac{{L}((1-t)\mu_1+t\mu_2,\theta_0)-{L}(\mu_1,\theta_0)}{t}.
	\end{align*}
	Thus, by taking $t\downarrow 0$ and taking the supremum over all $\theta_0\in\Theta^*(\mu_1)$, we have
	\begin{equation}\label{tpart1}
	{\psi}^{+}(0)\geq\sup_{\theta\in \Theta^*(\mu_1)}\lim_{t\downarrow 0} \frac{{L}((1-t)\mu_1+t\mu_2,\theta)-{L}(\mu_1,\theta)}{t}.
	\end{equation}
	Notice that the existence of the several limits above follows from the convexity of related functions.
	To prove the reverse inequality, we consider a sequence $\{t_k\}$ with $0<t_k<1$  and $t_k\downarrow 0$. Then, we pick another sequence $\{\theta_k\} \subseteq \mathcal{U}_{data}$ with $\theta_k\in \Theta^*((1-t_k)\mu_1+t_k\mu_2)$ for all $k$. Since $\mathcal{U}_{data}$ is compact, there exist a subsequence of $\{\theta_k\}$ converge to some $\theta_0\in \mathcal{U}_{data}$. Without loss of generality, we drop the subsequence and simply assume $\theta_k \rightarrow \theta_0$. We first show  $\theta_0\in \Theta^*(\mu_1)$.
	To do this, pick any $\tilde\theta_0 \in \Theta^*(\mu_1)$. Since
	${L}((1-t)\mu_1+t\mu_2,\theta)$ is jointly continous in $t$ and $\theta$, we have
	\begin{equation*}
	{L}(\mu_1,\theta_0)=\lim\limits_{k\rightarrow \infty} {L}((1-t_k)\mu_1+t_k\mu_2,\theta_k) \geq \lim\limits_{k\rightarrow \infty} {L}((1-t_k)\mu_1+t_k\mu_2,\tilde\theta_0) ={L}(\mu_1,\tilde\theta_0),
	\end{equation*}
	where the inequality follows from the definition of $\theta_k$. Now, since $\tilde\theta_0 \in \Theta^*(\mu_1)$ and ${L}(\mu_1,\theta_0)\geq 	{L}(\mu_1,\tilde\theta_0)$, we must have
	\begin{equation*}
	{L}(\mu_1,\theta_0)= 	{L}(\mu_1,\tilde\theta_0) \text{ and }
	\theta_0 \in \Theta^*(\mu_1).
	\end{equation*}
	Now, using the definition of $\Theta^*(\mu_1)$, we can write
	\begin{align}\label{danski}
	{\psi}^{+}(0)=\inf_{0<t} \frac{{\psi}(t)-{\psi}(0)}{t}
	\leq \frac{{\psi}(t_k)-{\psi}(0)}{t_k}
	=&\frac{{l}((1-t_k)\mu_1+t_k\mu_2)-{l}(\mu_1)}{t_k}\nonumber\\
	=& \frac{{L}((1-t_k)\mu_1+t_k\mu_2,\theta_k)-{L}(\mu_1,\theta_0)}{t_k} \nonumber\\
	\leq&\frac{{L}((1-t_k)\mu_1+t_k\mu_2,\theta_k)-{L}(\mu_1,\theta_k)}{t_k}.
	\end{align}
	Now we use \lemref{dir} to conclude that
	\begin{align}\label{tpart2}
	{\psi}^{+}(0) \leq&\limsup_{k \rightarrow \infty}\frac{{L}((1-t_k)\mu_1+t_k\mu_2, \theta_k )-{L}(\mu_1,\theta_k)}{t_k} \nonumber\\
	\leq&   \lim_{t\downarrow 0} \frac{L((1-t)\mu_1+t\mu_2,\theta_0)-L(\mu_1,\theta_0)}{t} \nonumber\\
	\leq& \sup_{\theta\in \Theta^*(\mu_1)}  \lim_{t\downarrow 0} \frac{L((1-t)\mu_1+t\mu_2,\theta)-L(\mu_1,\theta)}{t}
	\end{align}
	Finally, we combine \eqref{tpart1} and \eqref{tpart2} to conclude the proof
	\begin{equation*}
	{\psi}^{+}(0)=\sup_{\theta\in \Theta^*(\mu_1)}\lim_{t\downarrow 0} \frac{{L}((1-t)\mu_1+t\mu_2,\theta)-{L}(\mu_1,\theta)}{t}
	\end{equation*}
\end{proof}

\begin{proof}[Proof of Theorem \ref{ana}]
	The result can be obtained from Leibniz's integral rule (i.e. differentiation under the integral sign). See, for example, Theorem 2.27 in \cite{folland2013real}.
\end{proof}

% \subsection{Proof of Technical Lemmas}\label{sec:technical}
Next we prove Proposition \ref{collec}. For convenience, we note that \eqref{gaupdf} can be written in a compact form for exponential family \cite{nielsen2014chi}:
\begin{equation}\label{exfamily}
	p(y ; \theta)= e^{\langle t(y),\theta\rangle-F(\theta)+k(y)},
	\end{equation}
where $\langle a,b\rangle=a^{\intercal} b$ represents the usual inner product in the Euclidean space, and $t(\cdot), F(\cdot)$ and $k(\cdot)$ are known functions. In particular, we have \begin{equation}\label{example1}
F(\theta)=\frac{\theta^\intercal \Sigma^{-1}\theta}{2}
\end{equation}

To facilitate the calculation, we first introduce two lemmas involving the exponential parametric family based on \cite{nielsen2014chi}.
\begin{lemma}\label{pq}
	Pick $\theta_1,\theta_2 \in \Theta$. If $2\theta_2 -\theta_1\in \Theta$, then we have
	\begin{equation*}
	\int_{\mathcal{Y}}\frac{(p(y;\theta_2))^2}{p(y;\theta_1)}dy=e^{F(2\theta_2 -\theta_1)-(2F(\theta_2)-F(\theta_1))}.
	\end{equation*}
	In particular, if $F(\theta)=\frac{\theta^\intercal \Sigma^{-1}\theta}{2}$, then $	\int_{\mathcal{Y}}\frac{(p(y;\theta_2))^2}{p(y;\theta_1)}dy=e^{(\theta_2-\theta_1)^\intercal\Sigma^{-1}(\theta_2-\theta_1)}$.
\end{lemma}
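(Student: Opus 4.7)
My plan is to prove Lemma \ref{pq} by a direct algebraic manipulation of the integrand using the exponential family representation \eqref{exfamily}, rewriting the ratio $(p(y;\theta_2))^2/p(y;\theta_1)$ as a parameter-only constant multiplied by a genuine exponential family density, and then integrating that density to $1$.

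First I would substitute $p(y;\theta)=e^{\langle t(y),\theta\rangle-F(\theta)+k(y)}$ into numerator and denominator separately. Squaring the numerator doubles each term in the exponent, and subtracting the exponent of the denominator gives an exponent that is linear in $(\theta_1,\theta_2)$. Collecting terms, the $y$-dependent piece is $\langle t(y),2\theta_2-\theta_1\rangle+k(y)$ (note that $2k(y)-k(y)=k(y)$, which is exactly what is needed for a genuine density at a new parameter), while the parameter-only piece is $-2F(\theta_2)+F(\theta_1)$.

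Next, I would add and subtract $F(2\theta_2-\theta_1)$ inside the exponent to complete the log-density at the parameter $2\theta_2-\theta_1$. This rewrites the integrand as
\begin{equation*}
\frac{(p(y;\theta_2))^2}{p(y;\theta_1)}=e^{F(2\theta_2-\theta_1)-(2F(\theta_2)-F(\theta_1))}\cdot p(y;2\theta_2-\theta_1).
\end{equation*}
At this point, the assumption $2\theta_2-\theta_1\in\Theta$ is exactly what guarantees that $p(\,\cdot\,;2\theta_2-\theta_1)$ is a bona fide probability density on $\mathcal Y$, so integrating both sides over $\mathcal Y$ kills the density factor and yields the claimed formula. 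For the Gaussian specialization with $F(\theta)=\tfrac{1}{2}\theta^\intercal\Sigma^{-1}\theta$, I would expand $F(2\theta_2-\theta_1)-2F(\theta_2)+F(\theta_1)$ as a quadratic in $\theta_1,\theta_2$; the $\theta_2^\intercal\Sigma^{-1}\theta_2$ and $\theta_1^\intercal\Sigma^{-1}\theta_1$ coefficients simplify, leaving the clean quadratic form $(\theta_2-\theta_1)^\intercal\Sigma^{-1}(\theta_2-\theta_1)$.

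The proof is essentially a one-line calculation, so there is no serious obstacle; the only point requiring care is the role of the hypothesis $2\theta_2-\theta_1\in\Theta$. Without it, $F(2\theta_2-\theta_1)$ may not be finite and $p(\,\cdot\,;2\theta_2-\theta_1)$ may fail to be integrable, so I would flag this explicitly to justify the step that replaces the density integral by $1$. The Gaussian computation is a routine quadratic expansion and poses no difficulty beyond tracking signs.
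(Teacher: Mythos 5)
Your proposal is correct and follows essentially the same route as the paper's proof: substitute the exponential family form, factor the integrand as $e^{F(2\theta_2-\theta_1)-(2F(\theta_2)-F(\theta_1))}\cdot p(y;2\theta_2-\theta_1)$, and integrate the density to $1$, with the hypothesis $2\theta_2-\theta_1\in\Theta$ playing exactly the role you identify. The Gaussian specialization by quadratic expansion is likewise the same routine step the paper leaves implicit.
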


\begin{proof}
	It follows from \eqref{exfamily} that
	\begin{align*}
	\int_{\mathcal{Y}}\frac{(p(y;\theta_2))^2}{p(y;\theta_1)}dy=&e^{<t(y),2\theta_2-\theta_1>-(2F(\theta_2)-F(\theta_1))+k(y)}dy \nonumber\\
	=&e^{F(2\theta_2-\theta_1)-(2F(\theta_2)-F(\theta_1))} \cdot \int_{\mathcal{Y}} p(y;2\theta_2-\theta_1) dy \nonumber\\
	=& e^{F(2\theta_2-\theta_1)-(2F(\theta_2)-F(\theta_1))} .
	\end{align*}
\end{proof}
\begin{lemma}\label{3pq}
	Pick $\theta_1$,$\theta_2$ and $\theta_3 \in \Theta$. If $2\theta_2-2{\theta}_{1}+\theta_3\in\Theta$, then we have
	\begin{align*}
	\int_{\mathcal{Y}}\frac{(p(y;\theta_2))^2p(y;\theta_3)}{(p(y;\theta_1))^2}dy =e^{F(2\theta_2-2{\theta}_{1}+\theta_3)-2F(\theta_2)+2F({\theta}_{1})-F(\theta_3)}.
	\end{align*}
	In particular, if $F(\theta)=\frac{\theta^\intercal \Sigma^{-1}\theta}{2}$, then $		\int_{\mathcal{Y}}\frac{(p(y;\theta_2))^2p(y;\theta_3)}{(p(y;\theta_1))^2}dy=e^{(\theta_2-\theta_1)^\intercal\Sigma^{-1}(\theta_2-\theta_1)+2(\theta_2-\theta_1)\Sigma^{-1}(\theta_3-\theta_1)}$.
\end{lemma}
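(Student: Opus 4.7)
The plan is to mirror the computation used in the proof of Lemma \ref{pq}, exploiting the exponential family form \eqref{exfamily} so that the ratio in the integrand collapses into a single exponential that is linear in $t(y)$, and hence proportional to a valid density in the family whose parameter can be read off from the exponent.

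Concretely, I would first substitute $p(y;\theta)=e^{\langle t(y),\theta\rangle-F(\theta)+k(y)}$ into the integrand. The $t(y)$-dependent parts combine to give $\langle t(y),2\theta_2-2\theta_1+\theta_3\rangle$, the $F$ parts give $-2F(\theta_2)+2F(\theta_1)-F(\theta_3)$, and the $k(y)$ parts give $2k(y)+k(y)-2k(y)=k(y)$. Factoring the constant (in $y$) out, I would then add and subtract $F(2\theta_2-2\theta_1+\theta_3)$ in the exponent to rewrite the integrand as
\begin{equation*}
e^{F(2\theta_2-2\theta_1+\theta_3)-2F(\theta_2)+2F(\theta_1)-F(\theta_3)} \cdot p(y;\,2\theta_2-2\theta_1+\theta_3).
\end{equation*}
The assumption $2\theta_2-2\theta_1+\theta_3\in\Theta$ guarantees that the second factor is a bona fide density of the family, so it integrates to $1$ over $\mathcal Y$, leaving the claimed value.

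For the Gaussian specialization with $F(\theta)=\tfrac12 \theta^\intercal\Sigma^{-1}\theta$, I would expand
$F(2\theta_2-2\theta_1+\theta_3)-2F(\theta_2)+2F(\theta_1)-F(\theta_3)$ and collect terms. Writing $u=\theta_2-\theta_1$ and $v=\theta_3-\theta_1$ helps: a direct expansion gives $2u^\intercal\Sigma^{-1}u + 2u^\intercal\Sigma^{-1}v$ plus cross terms that cancel against $-2F(\theta_2)+2F(\theta_1)-F(\theta_3)$, matching the announced form $u^\intercal\Sigma^{-1}u+2u^\intercal\Sigma^{-1}v$.

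There is no real obstacle here: the argument is a one-line manipulation of the exponential family normalization, entirely parallel to Lemma \ref{pq}. The only point requiring care is the membership condition $2\theta_2-2\theta_1+\theta_3\in\Theta$, which is exactly what is needed to invoke $\int p(y;2\theta_2-2\theta_1+\theta_3)\,dy=1$; without this hypothesis the formal manipulation still produces the same expression, but the integral need not converge or the corresponding density need not exist in the family.
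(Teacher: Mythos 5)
Your proposal is correct and follows exactly the route the paper intends: the paper's proof of this lemma simply says it "follows from the same techniques as in Lemma \ref{pq}," namely substituting the exponential family form, recognizing the integrand as $e^{F(2\theta_2-2\theta_1+\theta_3)-2F(\theta_2)+2F(\theta_1)-F(\theta_3)}\,p(y;2\theta_2-2\theta_1+\theta_3)$, and integrating the density to $1$ under the membership hypothesis. Your bookkeeping of the $t(y)$, $F$, and $k(y)$ terms and the Gaussian specialization both check out.
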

\begin{proof}
	The proof follows from the same techniques as in \lemref{pq}.
\end{proof}
Then \eqref{final1} follows from \eqref{parametric uncertainty}, \eqref{example1} and \lemref{pq} so that
\begin{align*}
{\mathcal{U}}_{data}\triangleq\left\{\theta \in \Theta: \chi^2(\mathbb P_{\hat\theta}, \mathbb P_\theta)\leq \frac{\chi^2_{1-\alpha,D}}{n}\right\}
=&\left\{\theta \in \Theta: e^{F(2\theta -\hat\theta)-(2F(\theta)-F(\hat\theta))}-1\leq \frac{\chi^2_{1-\alpha,D}}{n}\right\} \nonumber\\
=&\left\{\theta \in \Theta: e^{(\theta-\hat\theta)^\intercal\Sigma^{-1}(\theta-\hat\theta)}-1\leq \frac{\chi^2_{1-\alpha,D}}{n}\right\}\nonumber\\
=&\left\{\theta: \hat\theta+\Sigma^{\frac{1}{2}}v, \quad \text {for all }  \|v\|_2^2 \leq \log(1+\frac{\chi^2_{1-\alpha,D}}{n})\right\},
\end{align*}
and \eqref{final2} follows. We now prove Proposition \ref{collec}:

\begin{proof}[Proof of Proposition \ref{collec}]
Following Theorem \ref{ana}, \lemref{pq} and \lemref{3pq}, we have
\begin{align}\label{cproof}
&\sup_{\theta\in \Theta^*(\delta_{\hat\theta})}\int_{\mathcal{Y}}\frac{(p(y; \theta))^2\cdot \int_{\Theta}p(y;\theta')(\delta_{\hat\theta}-\mu_{prop})(d\theta')}{(\int_{\Theta} p(y;\theta') \delta_{\hat\theta}(d\theta'))^2} dy \nonumber\\
=&\sup_{\theta\in \Theta^*(\delta_{\hat\theta})}\bigg(\int_{\mathcal{Y}}\frac{(p(y; \theta))^2}{ p(y;\hat\theta)} dy-\int_{\mathcal{Y}}\frac{(p(y; \theta))^2\cdot \int_{\Theta}p(y;\theta')(\mu_{prop})(d\theta')}{( p(y;\hat\theta))^2} dy\bigg) \nonumber\\
=& \sup_{\theta\in \Theta^*(\delta_{\hat\theta})}\bigg(\int_{\mathcal{Y}}\frac{(p(y; \theta))^2}{ p(y;\hat\theta)} dy-\int_{\Theta}\int_{\mathcal{Y}}\frac{(p(y; \theta))^2\cdot p(y;\theta')}{( p(y;\hat\theta))^2} dy\cdot \mu_{prop}(d\theta')\bigg) \nonumber\\
=& \sup_{\theta\in \Theta^*(\delta_{\hat\theta})}\bigg(e^{(\theta-\hat\theta)^\intercal\Sigma^{-1}(\theta-\hat\theta)}-\int_{\Theta}e^{(\theta-\hat\theta)^\intercal\Sigma^{-1}(\theta-\hat\theta)+2(\theta-\hat\theta)^\intercal\Sigma^{-1}(\theta'-\hat\theta)}\mu_{prop}(d\theta')\bigg) \nonumber\\
=&(1+\frac{\chi^2_{1-\alpha,D}}{n})\cdot \sup_{\theta\in \Theta^*(\delta_{\hat\theta})}\Big(1-\mathbb E_{\theta'\sim\mu_{prop}}[e^{2(\theta-\hat\theta)^\intercal\Sigma^{-1}(\theta'-\hat\theta)}]\Big) \nonumber\\
=&(1+\frac{\chi^2_{1-\alpha,D}}{n})\cdot\Big(1-\inf_{\theta\in \Theta^*(\delta_{\hat\theta})}\mathbb E_{\theta'\sim\mu_{prop}}[e^{2(\theta-\hat\theta)^\intercal\Sigma^{-1}(\theta'-\hat\theta)}]\Big).
\end{align}
Notice the second equality follows from Fubini's theorem. The third equality follows from \lemref{pq} and \lemref{3pq}. The fourth equality follows from \eqref{final2}. Now, following the last line \eqref{cproof}, for the search of descent direction, it is sufficient to prove
	\begin{equation*}
	\inf_{\theta\in \Theta^*(\delta_{\hat\theta})}\mathbb E_{\theta'\sim\mu_{prop}}[e^{2(\theta-\hat\theta)^\intercal\Sigma^{-1}(\theta'-\hat\theta)}]>1.
	\end{equation*}
	However, since $\mu_{prop}(d\theta')$ is a symmetrical distribution around $\hat\theta$, we know that
	\begin{equation*}
	\mathbb E_{\theta'\sim\mu_{prop}}[2(\theta-\hat\theta)^\intercal\Sigma^{-1}(\theta'-\hat\theta)]=0.
	\end{equation*}
	for any $\theta\in \Theta^*(\delta_{\hat\theta})$. Then, it follows from Jensen's inequality that
	\begin{equation*}
	\inf_{\theta\in \Theta^*(\delta_{\hat\theta})}\mathbb E_{\theta'\sim\mu_{prop}}[e^{2(\theta-\hat\theta)^\intercal\Sigma^{-1}(\theta'-\hat\theta)}] \geq 1.
	\end{equation*}
	Now suppose for the sake of contradiction that
	\begin{equation*}
	\inf_{\theta\in \Theta^*(\delta_{\hat\theta})}\mathbb E_{\theta'\sim\mu_{prop}}[e^{2(\theta-\hat\theta)^\intercal\Sigma^{-1}(\theta'-\hat\theta)}] = 1.
	\end{equation*}
	Then, let $\{\theta_k\}_k \subseteq \Theta^* (\delta_{\hat\theta})$ be a subsequence such that $\mathbb E_{\theta'\sim\mu_{prop}}[e^{2(\theta_k-\hat\theta)^\intercal\Sigma^{-1}(\theta'-\hat\theta)}] \rightarrow 1$. Due to the compactness of $\Theta^* (\delta_{\hat\theta})$, we can find a subsequence of $\{ \theta_k \}_k$ converging to some $\theta_{0} \in \Theta^* (\delta_{\hat\theta})$. For convenience we drop the subsequence and suppose $\theta_k \rightarrow \theta_{0}.$ Then the existence of $Y$ allows us to use dominated convergence theorem:
	\begin{equation*}
	\mathbb E[e^{2Y_{\theta_0}}]=\mathbb E_{\theta'\sim\mu_{prop}} [e^{2(\theta_{0}-\hat\theta)^\intercal\Sigma^{-1}(\theta'-\hat\theta)}]=\lim_{k\to\infty}\mathbb E_{\theta'\sim\mu_{prop}}[e^{2(\theta_k-\hat\theta)^\intercal\Sigma^{-1}(\theta'-\hat\theta)}]=1.
	\end{equation*}
	However, Jensen's inequality would indicate that $\mathbb E[e^{2Y_{\theta_{0}}}]=1$ if and only $\mathbb P(Y_{\theta_0}=0)=1$, which contradicts our assumption. Thus, we know that
	\begin{equation*}
	\inf_{\theta\in \Theta^*(\delta_{\hat\theta})}\mathbb E_{\theta'\sim\mu_{prop}}[e^{2(\theta-\hat\theta)^\intercal\Sigma^{-1}(\theta'-\hat\theta)}] > 1,
	\end{equation*}
	as claimed.
\end{proof}

\begin{proof}[Proof of Proposition \ref{cod}]
First we prove \eqref{codp1}. Letting $c=\frac{1}{(2\pi)^D|\Sigma|}$, we know that
\begin{align}\label{llll}
    \chi^2(\mathbb P_0,\mathbb P_\theta)=&\int \frac{p^2(y;\theta)}{p_0(y)}dy -1 \nonumber\\
    =& c\int \frac{e^{-(y-\theta)^T\Sigma^{-1}(y-\theta)}}{p_0(y)}dy -1\nonumber\\
    =& ce^{-\|\Sigma^{-1/2}(\theta-\hat{\theta})\|_2^2}\int \frac{e^{-(y-\hat{\theta})^T\Sigma^{-1}(y-\hat{\theta})}\cdot e^{-2(y-\hat\theta)^T\Sigma^{-1}(\hat\theta-\theta)}}{p_0(y)}dy -1 \nonumber\\
    =& c|\Sigma^{1/2}|e^{-\|\Sigma^{-1/2}(\theta-\hat{\theta})\|_2^2}\int \frac{e^{-z^Tz}\cdot e^{-2z^T\Sigma^{-1/2}(\hat\theta-\theta)}}{p_0(\Sigma^{1/2}z+\hat\theta)}dz -1 \nonumber\\
    =& c|\Sigma| e^{-\|\Sigma^{-1/2}(\theta-\hat{\theta})\|_2^2}\int \frac{e^{-z^Tz}\cdot e^{-2z^T\Sigma^{-1/2}(\hat\theta-\theta)}}{p_Z(z)}dz -1
\end{align}
where we denote $p_Z(\cdot)$ to be the density function of random variable $Z=\Sigma^{-1/2}(Y-\hat\theta)$ with $Y\sim\mathbb P_0$ and the last two lines follow from a change of variable $z=\Sigma^{-1/2}(y-\hat\theta)$. Now, since $\|\Sigma^{-1/2}(\theta_1-\hat\theta)\|_2^2=\|\Sigma^{-1/2}(\theta_2-\hat\theta)\|_2^2=r$ for some $r$ by assumption, it follows from \eqref{llll} that $\chi^2(\mathbb P_0,\mathbb P_{\theta_1})=\chi^2(\mathbb P_0,\mathbb P_{\theta_2})$ if we can show
\begin{equation*}
    \int \frac{e^{-z^Tz}\cdot e^{-2z^T\Sigma^{-1/2}(\hat\theta-\theta_1)}}{p_Z(z)}dz=\int \frac{e^{-z^Tz}\cdot e^{-2z^T\Sigma^{-1/2}(\hat\theta-\theta_2)}}{p_Z(z)}dz.
\end{equation*}
However, since $p_Z(z)$ and $e^{-z^Tz}$ are both rotationally invariant functions (i.e. $f(z)=f(Q^\intercal z)$ for all $z$ and rotational matrix $Q$, with $|Q|=1$), it can be shown that $\int \frac{e^{-z^Tz}\cdot e^{-2z^T\nu}}{p_Z(z)}dz$ holds the same value for any $\nu$ such that $\|\nu\|_2^2=r$. Notice the rotational invariance of $p_Z(z)$ follows from the rotational invariance of $Z$. This proves \eqref{codp1}.
To prove \eqref{codp2}, notice that for any $\theta\in\mathcal U_{data}$, we can find some $0\leq t \leq 1$ such that $$(((1-t)\hat\theta+t\theta^\star)-\hat\theta)^\intercal\Sigma^{-1}(((1-t)\hat\theta+t\theta^\star)-\hat\theta)=(\theta-\hat\theta)^\intercal\Sigma^{-1}(\theta-\hat\theta)$$
and hence $\chi^2(\mathbb P_0,\mathbb P_{((1-t)\hat\theta+t\theta^*)})=\chi^2(\mathbb P_0,\mathbb P_{\theta})$ by \eqref{codp1}.
\end{proof}

\begin{proof}[Proof of Proposition \ref{cod1}]
To check that $Y\sim\mathbb P_t$ with density
\begin{equation*}
p_{t}(y)=\int_{\Theta} p(y;\theta')  ((1-t)\delta_{\hat\theta}+t\mu_{prop})(d\theta')=(1-t)\mathbb P_{\hat\theta}+\int_{\Theta}p(y;\theta')\mu_{prop}(d\theta'),
\end{equation*}
leads to rotationally invariant $Z=\Sigma^{-1/2}(Y-\hat\theta)$, simply notice that
\begin{equation*}
    Y {\buildrel \mathcal D \over =} (1-U_t)(\hat\theta+X_1)+U_t(\hat\theta+\sqrt{\frac{\chi^2_{1-\alpha,D}}{n}}\cdot \Sigma^{1/2}\eta+X_2),
\end{equation*}
where $U_t$ is an independent Bernoulli variable with success rate $t$, $\eta$ is a random vector uniformly distributed on the surface of the $D$-dimensional unit ball and $X_1,X_2$ are independent $\mathcal N(0,\Sigma)$. Then, it follows that
\begin{equation*}
    \Sigma^{-1/2}(Y-\hat\theta) {\buildrel \mathcal D \over =} (1-U_t) Z_1+ U_t(\sqrt{\frac{\chi^2_{1-\alpha,D}}{n}} \eta +Z_2)
\end{equation*}
where $Z_1,Z_2$ are now independent $\mathcal N(0,I_D)$. Consequently, the rotational invariance of $Z$ now follows from the rotational invariance of $Z_1,Z_2,\eta$ and their independence.
\end{proof}

Following the comments after Proposition \ref{collec}, we show that $\theta\sim\mu_{prop}$ with $\theta\overset{\mathcal{D}}{=}\hat{\theta}+\sqrt{\frac{\chi^2_{1-\alpha,D}}{n}}\cdot\Sigma^{1/2}\cdot\eta$ provides a descent direction, with an alternate proof using the following lemma and the last line of \eqref{cproof}.
\begin{lemma}\label{move}
Fixing $\theta_1 \in \Theta^*(\delta_{\hat{\theta}})$, we have
	\begin{equation*} \mathbb{E}_{\theta_2\sim\mu_{prop}}[e^{2(\theta_1-\hat{\theta})^T\Sigma^{-1}(\theta_2-\hat{\theta})}] >1,
	\end{equation*}
	for $\theta_2\sim\mu_{prop}(d\theta)$ where $\theta_2\overset{\mathcal{D}}{=}\hat{\theta}+\sqrt{\frac{\chi^2_{1-\alpha,D}}{n}}\cdot\Sigma^{1/2}\cdot\eta$ with $\eta$ following the uniform distribution on the surface of the $D$-dimensional unit ball.
\end{lemma}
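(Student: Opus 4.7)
The plan is to reduce the expectation to a one-dimensional quantity on the sphere and then invoke strict Jensen's inequality. Since $\theta_1\in\Theta^*(\delta_{\hat\theta})$, the characterization \eqref{final2} lets us write $\theta_1=\hat\theta+\Sigma^{1/2}v$ with $\|v\|_2^2=\log(1+\chi^2_{1-\alpha,D}/n)>0$. Similarly, by construction, $\theta_2-\hat\theta=\sqrt{\chi^2_{1-\alpha,D}/n}\,\Sigma^{1/2}\eta$. Substituting these into the exponent and using $\Sigma^{1/2}\Sigma^{-1}\Sigma^{1/2}=I$ collapses the quadratic form into a scalar inner product:
\begin{equation*}
2(\theta_1-\hat\theta)^T\Sigma^{-1}(\theta_2-\hat\theta)=2\sqrt{\tfrac{\chi^2_{1-\alpha,D}}{n}}\,v^T\eta.
\end{equation*}
So the claim reduces to showing $\mathbb{E}_\eta[e^{2c\, v^T\eta}]>1$ for $c=\sqrt{\chi^2_{1-\alpha,D}/n}>0$ and $\eta$ uniform on the unit sphere of $\mathbb{R}^D$.

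Next, I will use the fact that the uniform distribution on the sphere is invariant under $\eta\mapsto-\eta$, which immediately yields $\mathbb{E}[v^T\eta]=0$. This makes the expected value of the exponent equal to zero, so naive Jensen's gives $\mathbb{E}[e^{2c\,v^T\eta}]\geq e^0=1$; to upgrade this to a strict inequality, I need to confirm that the random variable $2c\,v^T\eta$ is not almost surely constant. Since $c>0$ and $v\neq 0$, the variable $v^T\eta$ has strictly positive variance (one can, e.g., note that it takes the values $\pm\|v\|_2$ on the two-point intersection of the sphere with the line $\operatorname{span}(v)$, or compute $\operatorname{Var}(v^T\eta)=\|v\|_2^2/D>0$ by the known covariance of the uniform spherical law). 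Strict Jensen's inequality for the strictly convex exponential then gives
\begin{equation*}
\mathbb{E}_{\theta_2\sim\mu_{prop}}[e^{2(\theta_1-\hat\theta)^T\Sigma^{-1}(\theta_2-\hat\theta)}]=\mathbb{E}_\eta[e^{2c\,v^T\eta}]>e^{2c\,\mathbb{E}[v^T\eta]}=1.
\end{equation*}

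There is no real obstacle here beyond bookkeeping—the only point that needs care is confirming both that $v\neq 0$ (which follows because $r=\sqrt{\log(1+\chi^2_{1-\alpha,D}/n)}>0$ whenever $\alpha<1$) and that $\eta$ has nondegenerate distribution along $v$, so that strict Jensen applies; everything else is a change of variables. In particular, no appeal to Lemmas \ref{pq} or \ref{3pq} is needed for this lemma itself, although the resulting identity feeds directly into the final line of \eqref{cproof} to certify that the proposed mixture yields a strict descent direction from $\delta_{\hat\theta}$.
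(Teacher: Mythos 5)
Your proof is correct, and it takes a genuinely different route from the paper's. You reduce the exponent to $2c\,v^T\eta$ with $c=\sqrt{\chi^2_{1-\alpha,D}/n}>0$ and $\|v\|^2=\log(1+\chi^2_{1-\alpha,D}/n)>0$, note that $\eta\overset{\mathcal D}{=}-\eta$ forces $\mathbb E[v^T\eta]=0$ while $\operatorname{Var}(v^T\eta)=\|v\|_2^2/D>0$, and conclude by strict Jensen for the strictly convex exponential; all of these steps are sound, and the non-degeneracy check is exactly the point that makes Jensen strict rather than weak. The paper instead derives the exact law $\tfrac{u^T\eta+1}{2}\sim\mathrm{Beta}(\tfrac{D-1}{2},\tfrac{D-1}{2})$, expresses the expectation through the confluent hypergeometric function via Kummer's identity, and extracts the explicit lower bound $1+16c\rho_n^2$ with $\rho_n=\|v\|_2\sqrt{\chi^2_{1-\alpha,D}/n}$. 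What each approach buys: yours is shorter, elementary, and avoids special functions entirely; the paper's heavier computation yields (i) a quantitative gap above $1$ that is uniform over $\Theta^*(\delta_{\hat\theta})$ (relevant because the descent criterion in \eqref{cproof} involves an infimum over that set, though in your reduction the expectation visibly depends on $\theta_1$ only through $\|v\|_2$, which is constant on $\Theta^*(\delta_{\hat\theta})$, so uniformity also follows from your argument), and (ii) the Beta/hypergeometric identities that the paper reuses in the subsequent remark to compute $\mathcal D_{data}(\mathbb P_0)$ numerically and in Lemma \ref{n1}, where an $O(r)$ lower bound, not just strict positivity, is genuinely needed. For the lemma as stated, your argument is complete.
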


\begin{proof}[Proof of Lemma \ref{move}]
	Let $u_1 \in \mathbb{R}^D$ denote an arbitary point on the surface of $D$ dimensional unit ball ( $\|u_1\|_2^2=1$) and let $\eta=[\eta_1,\eta_2,...,\eta_D]$ be the random vector in $\mathbb{R}^D$ uniformly distributed on the surface of $D$ dimensional unit ball. Then we claim that $\frac{\mu_1^T\eta+1}{2}\sim Beta(\frac{D-1}{2},\frac{D-1}{2})$.
	
	To show this, assume without loss of generality that $u_1=[1,0,...,0]\in\mathbb{R}^D$. Then for any $t\in[-1,1]$, it follows that
	$\mathbb{P}(u_1^T\eta\in dt)$ is proportional to the infinitesimal surface area on the ball corresponding to $\eta_1\in dt$, which is in turn proportional to the product of the sub-dimension $D-2$ surface area on the belt $x_2^2+x_3^2+...+x_D^2=1-t^2$ with the infinitesimal width of this belt. Specifically, the sub-dimension $D-2$ surface area around the belt is proportional to $(\sqrt{1-t^2})^{D-2}$. This follows from the fact that points of the form $[0,\sqrt{1-t^2},0,...,0], [0,0,\sqrt{1-t^2},0,...,0],...,[0,0,...,0,\sqrt{1-t^2}]$ are on this belt. Also, the width of this belt, according to the Pythagorean theorem,  is $dt \cdot \sqrt{(\frac{d\sqrt{1-t^2}}{dt})^2+1}=\frac{dt}{\sqrt{1-t^2}}$. Thus,
	\begin{equation*}
	\mathbb{P}(u_1^T\eta\in dt)\propto \frac{(\sqrt{1-t^2})^{D-2}}{\sqrt{1-t^2}}dt=(1-t^2)^{\frac{D-3}{2}}dt.
	\end{equation*}
	Now, we can substitute $\frac{t+1}{2}=s$ with $s\in [0,1] $ to get
	\begin{equation*}
	\mathbb{P}(\frac{u_1^T\eta+1}{2}\in ds)\propto (s)^{\frac{D-1}{2}-1}(1-s)^{\frac{D-1}{2}-1}ds,
	\end{equation*}
	which can only be the density function for $Beta(\frac{D-1}{2},\frac{D-1}{2})$.  It now follows from \cite{winkelbauer2012moments} that $\frac{u_1^T\eta+1}{2}$ has moment generating function
	\begin{align}\label{hyper}
	M(t)\triangleq&\mathbb{E}[e^{t\cdot\frac{u_1^T\eta+1}{2}}] \nonumber\\
	=&_{1}F_{1}(\frac{D-1}{2},D-1,t)
	= e^{(t/2)}{} _{0}F_{1}(;\frac{D}{2},\frac{t^2}{16})
	\geq e^{t/2} (1+ct^2)
	>e^{(t/2)}.
	\end{align}
	for some $c>0$ where $_{1}F_{1}(\cdot,\cdot,\cdot)$ and $_{0}F_{1}(;,\cdot,\cdot)$ are the confluent hypergeometric function with identity $_1F_1(a,2a,x)=e^{x/2}_0F_1(;a+1/2,x^2/16)$ (see \cite{daalhuis2010confluent}),
	\begin{equation*}
	_{0}F_{1}(;\alpha,t)\triangleq\sum_{k=0}^{\infty}\frac{t^k}{(\alpha)_kk!} \quad\text{and}\quad _{1}F_{1}(\alpha,\beta,t)\triangleq\sum_{k=0}^{\infty}\frac{(\alpha)_kt^k}{(\beta)_kk!},
	\end{equation*}
	with $(\gamma)_k=\frac{\Gamma(\gamma+k)}{\Gamma(\gamma)}$ being the Pochhammer symbol \cite{winkelbauer2012moments}. To conclude the proof, denote $\rho_n=\sqrt{\log(1+\frac{\chi^2_{1-\alpha,D}}{n})}\cdot\sqrt{\frac{\chi^2_{1-\alpha,D}}{n}}$ and use \eqref{final1}, \eqref{final2} and \eqref{hyper} to write
	\begin{align*}
	&\mathbb{E}_{\theta_2\sim\mu_{prop}}[e^{2(\theta_1-\hat{\theta})^T\Sigma^{-1}(\theta_2-\hat{\theta})}] \nonumber\\
	=&\mathbb{E}_{\nu\sim\eta}[e^{2\rho_n\cdot\mu_1^T\nu}]
	= \mathbb{E}_{X\sim Beta(\frac{D-1}{2},\frac{D-1}{2})}[e^{2\rho_n\cdot(2X-1)}]
	=M(4\rho_n)/ e^{2\rho_n}
	\geq  (1+16c\rho_n^2)>1.
	\end{align*}
\end{proof}
\begin{remark}
Following Lemma \ref{move}, we discuss the numerical calculations of $\mathcal D(\mathbb P_0)$ following Proposition
\ref{cod1}. We use $\mathcal U_{data}=\{\mathbb P_\theta: \|\theta-\hat\theta\|_2^2\leq\frac{\chi^2_{1-\alpha,D}}{n}\}$ where $p(y;\theta)=(2\pi)^{-\frac{D}{2}}  e^{-\frac{1}{2}\|(y-\theta)\|_2^2}$. Then, for $\mu_1$, the nominal $p_0(y)$ is simply $p(y;\hat\theta)$ and $\mathcal D_{data}(\mathbb P_{0})=\mathcal D_{data}(\mathbb P_{\hat\theta})=\max_{\theta\in\mathcal U} e^{\|\theta-\hat\theta\|_2^2}-1=e^{\frac{\chi^2_{1-\alpha,D}}{n}}-1$ according to \eqref{final1} and Lemma \ref{pq}. For $\mu_2$, it can be shown that the nominal $\mathbb P_0$ follows $\mathcal N(\hat\theta,(1+\frac{1}{n}) \cdot I_D)$, and a direct computation would show that $\mathcal D_{data}(\mathbb P_0)=\max_{\theta\in\mathcal U
}(\frac{(n+1)^2}{n(n+2)})^{\frac{d}{2}} e^{\frac{n}{n+2} \|\theta-\hat\theta\|_2^2}-1=(\frac{(n+1)^2}{n(n+2)})^{\frac{d}{2}} e^{\frac{n}{n+2}\frac{\chi^2_{1-\alpha,D}}{n}}-1$. Finally, for $\mu_3$, assume w.l.o.g that $\hat\theta=0$. Then we use the derivation in Lemma \ref{move} that $\frac{\mu_1^T\eta+1}{2}\sim Beta(\frac{D-1}{2},\frac{D-1}{2})$ for any $u_1$ on the $D$-dimensional unit ball surface to show that, for any $v\in\mathbb{R}^{D}$,
\begin{equation}
    \mathbb E_{\eta}[e^{\eta^Tv}]=e^{-\|v\|_2} {_1F_1(\frac{D-1}{2},D-1,2\|v\|_2)},
\end{equation}
and consequently
\begin{equation*}
   p_0(y)=(2\pi)^{-\frac{D}{2}}_1F_1(\frac{d-1}{2},d-1,2(\frac{\chi^2_{1-\alpha,D}}{n})^{1/2}\|y\|_2) e^{-\frac{1}{2}(\|y\|_2+\frac{\chi^2_{1-\alpha,D}}{n})^2}.
\end{equation*}
Then, to calculate $\mathcal D_{data}(\mathbb P_0)$, we note that
\begin{align}\label{compprop}
    \mathcal D_{data}(\mathbb P_0)+1=&\max_{\|\theta\|_2^2\leq \frac{\chi^2_{1-\alpha,D}}{n}} \int \frac{p^2(y;\theta)}{p_0(y)}dy  \nonumber\\
    =& \max_{\|\theta\|_2^2\leq \frac{\chi^2_{1-\alpha,D}}{n}} (2\pi)^{-\frac{D}{2}} e^{-\frac{1}{2} \|y\|_2^2} \frac{e^{-\|\theta\|_2^2+2\theta^T y+\frac{\chi^2_{1-\alpha,D}}{2n}+(\frac{\chi^2_{1-\alpha,D}}{n})^{\frac{1}{2}}\|y\|_2}}{_1F_1(\frac{d-1}{2},d-1,2(\frac{\chi^2_{1-\alpha,D}}{n})^{\frac{1}{2}}\|y\|_2)}\nonumber\\
    =& \max_{\|\theta\|_2^2\leq \frac{\chi^2_{1-\alpha,D}}{n}}\mathbb E_{Y\sim\mathcal N(0,I_D)}\bigg[\frac{e^{-\|\theta\|_2^2+2\theta^T Y+\frac{\chi^2_{1-\alpha,D}}{2n}+(\frac{\chi^2_{1-\alpha,D}}{n})^{\frac{1}{2}}\|Y\|_2}}{_1F_1(\frac{d-1}{2},d-1,2(\frac{\chi^2_{1-\alpha,D}}{n})^{\frac{1}{2}}\|Y\|_2)}\bigg].
    \end{align}
Furthermore, through either direct verification or analysis similar to those in Lemma \ref{move},  we note that $Y\sim\mathcal N(0,I_D)$ shares the same distribution of $L\eta$ where $L\in\mathbb{R}^{+}$ and $\eta\in\mathbb{R}^{D}$ are two independent random variables with $L$ being the norm of $\mathcal N(0,I_D)$ bearing density $f_L(l)=1_{\{l\geq 0\}}\frac{2^{1-\frac{D}{2}}}{\Gamma(\frac{D}{2})}l^{d-1}e^{-\frac{l^2}{2}}$ and $\eta$ being the random vector on the $D$-dimensional unit ball surface. Thus, it follows from \eqref{compprop} that  \eqref{compprop} equals
\begin{align*}
   &\max_{\|\theta\|_2^2\leq \frac{\chi^2_{1-\alpha,D}}{n}}\mathbb E_{L}\bigg[\mathbb E_{\eta}\bigg[\frac{e^{-\|\theta\|_2^2+2L\theta^T \eta+\frac{\chi^2_{1-\alpha,D}}{2n}+(\frac{\chi^2_{1-\alpha,D}}{n})^{\frac{1}{2}}L}}{_1F_1(\frac{d-1}{2},d-1,2(\frac{\chi^2_{1-\alpha,D}}{n})^{\frac{1}{2}}L)} \bigg|L \bigg]\bigg]\nonumber\\
   =&\max_{\|\theta\|_2^2\leq \frac{\chi^2_{1-\alpha,D}}{n}}\mathbb E_{L}\bigg[e^{-\|\theta\|_2^2+\frac{\chi^2_{1-\alpha,D}}{2n}+(\frac{\chi^2_{1-\alpha,D}}{n})^{\frac{1}{2}}L-2L\|\theta\|_2}\frac{_1F_1(\frac{D-1}{2},D-1,4L\|\theta\|_2)}{_1F_1(\frac{D-1}{2},D-1,2(\frac{\chi^2_{1-\alpha,D}}{n})^{\frac{1}{2}}L)}\bigg]\nonumber\\
   =&\max_{\|\theta\|_2^2\leq \frac{\chi^2_{1-\alpha,D}}{n}}\mathbb E_{L}\bigg[e^{-\|\theta\|_2^2+\frac{\chi^2_{1-\alpha,D}}{2n}}\frac{_0F_1(;\frac{D}{2},L^2\|\theta\|_2^2)}{_0F_1(;\frac{D}{2}, L^2(\frac{\chi^2_{1-\alpha,D}}{4n})}\bigg]\nonumber\\
   =&\max_{t\leq \frac{\chi^2_{1-\alpha,D}}{n}}e^{-t+\frac{\chi^2_{1-\alpha,D}}{2n}} \int_{l\geq0}\frac{_0F_1(;\frac{D}{2},l^2t)}{_0F_1(;\frac{D}{2}, l^2(\frac{\chi^2_{1-\alpha,D}}{4n})} \frac{2^{1-\frac{D}{2}}}{\Gamma(\frac{D}{2})}l^{d-1}e^{-\frac{l^2}{2}} dl
\end{align*}
which is numerically tractable.
\end{remark}

\begin{proof}[Proof of Theorem \ref{more}]
It follows from routine calculation that we can find a compact neighborhood of $r$ around 0 such that $\nabla_r L(r,\theta)$ exists and is continuous. Thus we can use the main theorem in \cite{clarke1975generalized} to show that
    \begin{align*}
\lim_{r\downarrow 0} \frac{l(r)-l(0)}{r}=&\sup_{\theta\in \Theta^*(\delta_{\hat\theta})}\int_{\mathcal{Y}}-\frac{(p(y; \theta))^2\cdot \lim_{r\downarrow 0}\frac{p_r(y)-p(y;\hat\theta)}{r}}{( p(y;\hat\theta') )^2} dy \nonumber\\
=&\sup_{\theta\in \Theta^*(\delta_{\hat\theta})}\lim_{r\downarrow 0}\frac{1}{r}\int_{\mathcal{Y}}\frac{(p(y; \theta))^2{(p(y;\hat\theta)-p_r(y))}}{( p(y;\hat\theta') )^2} dy \nonumber\\
=&\sup_{\theta\in \Theta^*(\delta_{\hat\theta})}\lim_{r\downarrow 0}\frac{1}{r}\int_{\mathcal{Y}}\frac{(p(y; \theta))^2}{p(y;\hat\theta')}-\frac{(p(y; \theta))^2\int _{\theta'\in\Theta}p(y;\theta')\mu_r(d\theta')}{( p(y;\hat\theta') )^2} dy\nonumber\\
=& (1+\frac{\chi^2_{1-\alpha,D}}{n})\cdot\lim_{r\downarrow 0}\frac{1}{r}\Big(1-\inf_{\theta\in \Theta^*(\delta_{\hat\theta})}\mathbb E_{\theta'\sim\mu_{r}}[e^{2(\theta-\hat\theta)^\intercal\Sigma^{-1}(\theta'-\hat\theta)}]\Big).
\end{align*}
\end{proof}

To prove Corollary \ref{variability}, we present two technical Lemmas \ref{n1} and \ref{n2}.
\begin{lemma}\label{n1}
	For any $\theta \in \Theta^*(\delta_{\hat\theta})$, $\lim_{r\downarrow 0}\frac{1}{r}\Big(1-\mathbb E_{\theta'\sim\mu^1_{r}}[e^{2(\theta-\hat\theta)^\intercal\Sigma^{-1}(\theta'-\hat\theta)}]\Big)$ is a fixed negative value.
\end{lemma}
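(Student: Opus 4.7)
\textbf{Proof proposal for Lemma \ref{n1}.} The plan is to reduce the expectation to a simple moment computation on the uniform distribution over the Euclidean ball, via the structure of $\Theta^*(\delta_{\hat\theta})$ from \eqref{final2}. First I would change variables: for any $\theta\in\Theta^*(\delta_{\hat\theta})$, write $\theta=\hat\theta+\Sigma^{1/2}v$ with $\|v\|_2^2=\log(1+\chi^2_{1-\alpha,D}/n)$, and write $\theta'=\hat\theta+\Sigma^{1/2}\eta_{\sqrt{r}}$ under $\mu^1_r$. Then
\begin{equation*}
(\theta-\hat\theta)^\intercal\Sigma^{-1}(\theta'-\hat\theta)=v^\intercal\eta_{\sqrt{r}},
\end{equation*}
so the quantity of interest equals $\mathbb E[e^{2v^\intercal\eta_{\sqrt{r}}}]$ where $\eta_{\sqrt{r}}$ is uniform in the closed ball of radius $\sqrt{r}$ in $\mathbb R^D$.

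Next I would Taylor-expand the exponential and use symmetry. Since $\eta_{\sqrt{r}}\stackrel{\mathcal D}{=}-\eta_{\sqrt{r}}$, all odd moments of $v^\intercal\eta_{\sqrt{r}}$ vanish, and boundedness ($\|\eta_{\sqrt{r}}\|_2\le\sqrt{r}$) justifies term-by-term integration. Writing $\eta_{\sqrt{r}}=\sqrt{r}\,\tilde\eta$ with $\tilde\eta$ uniform in the unit ball, a standard calculation using radial symmetry gives $\mathbb E[\tilde\eta\tilde\eta^\intercal]=\frac{1}{D+2}I_D$ (this follows from $\mathbb E[\|\tilde\eta\|_2^2]=D/(D+2)$ and isotropy). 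Therefore
\begin{equation*}
\mathbb E\bigl[e^{2v^\intercal\eta_{\sqrt{r}}}\bigr]=1+2\mathbb E[(v^\intercal\eta_{\sqrt{r}})^2]+R(r)=1+\frac{2r\|v\|_2^2}{D+2}+R(r),
\end{equation*}
where the remainder satisfies $|R(r)|\le\sum_{k\ge 2}\frac{(2\sqrt{r}\|v\|_2)^{2k}}{(2k)!}=O(r^2)$ as $r\downarrow 0$.

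Dividing by $r$ and letting $r\downarrow 0$ yields
\begin{equation*}
\lim_{r\downarrow 0}\frac{1}{r}\Bigl(1-\mathbb E_{\theta'\sim\mu^1_r}\bigl[e^{2(\theta-\hat\theta)^\intercal\Sigma^{-1}(\theta'-\hat\theta)}\bigr]\Bigr)=-\frac{2\|v\|_2^2}{D+2}=-\frac{2\log(1+\chi^2_{1-\alpha,D}/n)}{D+2}.
\end{equation*}
This is manifestly negative (since $\chi^2_{1-\alpha,D}/n>0$), and since the right-hand side depends only on the common norm $\|v\|_2^2$ shared by every element of $\Theta^*(\delta_{\hat\theta})$, it is the same value for all $\theta\in\Theta^*(\delta_{\hat\theta})$, which is exactly the ``fixed negative value'' claim.

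The main obstacle is not really computational but expository: ensuring that the bound $\|\eta_{\sqrt{r}}\|_2\le\sqrt{r}$ makes the Taylor remainder uniformly controllable and that the isotropy identity $\mathbb E[\tilde\eta\tilde\eta^\intercal]=I_D/(D+2)$ is invoked correctly. Once that moment is in hand, the rest reduces to a straightforward second-order expansion.
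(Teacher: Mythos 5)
Your proof is correct, and it takes a somewhat different route from the paper's. The paper decomposes $\Sigma^{-1/2}(\theta'-\hat\theta)$ into a radial part $R$ (with density $\frac{D}{(\sqrt r)^D}s^{D-1}$) times an angular part $U$ uniform on the unit sphere, conditions on $R$, and then invokes the confluent-hypergeometric moment-generating-function bound $M(t)\geq e^{t/2}(1+ct^2)$ established in the proof of Lemma \ref{move} to get $\mathbb E[e^{2\rho_n R\,u_1^\intercal U}]\geq 1+16c\rho_n^2 c_1 r$; this only yields an upper bound $-16c\rho_n^2c_1<0$ on the limit rather than its exact value. You instead Taylor-expand $e^{2v^\intercal\eta_{\sqrt r}}$ directly, kill the odd terms by symmetry, use the isotropy identity $\mathbb E[\tilde\eta\tilde\eta^\intercal]=I_D/(D+2)$ (which is correct: $\mathbb E\|\tilde\eta\|_2^2=D/(D+2)$), and control the remainder uniformly via $\|\eta_{\sqrt r}\|_2\leq\sqrt r$. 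Your computation checks out: the order-$2$ term of $e^{2x}$ is $2x^2$, giving $1+\tfrac{2r\|v\|_2^2}{D+2}+O(r^2)$ and the exact limit $-\tfrac{2\log(1+\chi^2_{1-\alpha,D}/n)}{D+2}$. What your approach buys is that it is more elementary (no Beta-distribution or hypergeometric machinery), it establishes the existence of the limit explicitly rather than only bounding it, and it produces the exact constant, which also makes the "fixed value across $\Theta^*(\delta_{\hat\theta})$" part of the claim immediate since the answer depends only on $\|v\|_2^2$. What the paper's approach buys is reuse of machinery already developed for Lemma \ref{move} and for the numerical evaluation of $\mathcal D_{data}(\mathbb P_0)$, where those hypergeometric identities are needed anyway.
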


\begin{proof}[Proof of Lemma \ref{n1}]
	For any $\theta \in \Theta^*(\delta_{\hat\theta})$, we have $\|\Sigma^{-1/2}(\theta-\hat\theta)\|_2=\sqrt {\log (1+\frac{\chi^2_{1-\alpha, D}}{n})}$. Denote $\rho_n=\sqrt {\log (1+\frac{\chi^2_{1-\alpha, D}}{n})}$. Furthermore, under $\theta' \sim \mu^1_r(d\theta')$, we have $\Sigma^{-1/2}(\theta'-\hat\theta)\sim\eta_{\sqrt{r}}$,  the uniform distribution inside the $D$-dimensional unit ball with radius $\sqrt{r}$, which can be viewed as the product of two independent random variables
	\begin{equation*}
	\eta_{\sqrt{r}}\sim U \cdot R,
	\end{equation*}
	where $U$ is the uniform distribution on the surface of the $D$-dimensional unit ball and $R$ is the norm of the random vector ranged from $0$ to $\sqrt{r}$. For any $0 \leq s \leq \sqrt{r}$, since $\eta_{\sqrt{r}}$ follows a uniform distribution inside a $D$-dimensional unit ball, and the volume of a $D$-dimensional ball with radius $s$ is proportional to $s^D$, then $f_R(s)$, the density of $R$, must satisfy
	\begin{equation*}
	f_R(s) \sim \frac{d s^D}{ds} \sim s^{D-1},
	\end{equation*}
	which is equivalent to saying
	\begin{equation*}
	f_R(s) =\frac{D}{(\sqrt{r})^D} s^{D-1}, \quad \text {for $0 \leq s \leq \sqrt{r}$}.
	\end{equation*}
	Thus, we have that $\mathbb E[R^2]=c_1 r$ for some $c_1>0$. Now we let $u_1=[1,0,...,0]\in\mathbb{R}^D$. We utilize the proof in \lemref{move} as well as the independence of $R, U$ to show that
	\begin{align*}
	\mathbb E_{\theta'\sim\mu^1_{r}}[e^{2(\theta-\hat\theta)^\intercal\Sigma^{-1}(\theta'-\hat\theta)}]=&\mathbb E_{U,R}[e^{2\rho_n \cdot R \cdot  u_1^\intercal U}] \nonumber\\
	=&\mathbb E_{R}[\mathbb E[e^{2\rho_n \cdot R \cdot  u_1^\intercal U} | R]] \nonumber\\
	=&\mathbb E_{R}[M(4\rho_nR)/e^{2\rho_n R}] \nonumber\\
	\geq &\mathbb E[1+16c \rho_n^2 R^2] \geq 1+16c\rho_n^2c_1r.
	\end{align*}
	Now it follows that
	\begin{equation*}
	\lim_{r\downarrow 0}\frac{1}{r}\Big(1-\mathbb E_{\theta'\sim\mu^1_{r}}[e^{2(\theta-\hat\theta)^\intercal\Sigma^{-1}(\theta'-\hat\theta)}]\Big) \leq -16c\rho_n^2c_1.
	\end{equation*}
\end{proof}

\begin{lemma}\label{n2}
	For any $\theta \in \Theta^*(\delta_{\hat\theta})$, $\lim_{r\downarrow 0}\frac{1}{r}\Big(1-\mathbb E_{\theta'\sim\mu^2_{r}}[e^{2(\theta-\hat\theta)^\intercal\Sigma^{-1}(\theta'-\hat\theta)}]\Big)$ is a fixed negative value.
\end{lemma}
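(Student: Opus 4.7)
My plan is to exploit the fact that, under $\mu_r^2$, the random vector $\theta'-\hat\theta$ is multivariate Gaussian $\mathcal N(0,r\Sigma)$, so the expectation in question is simply a moment generating function that can be evaluated in closed form. First I would set $v=2\Sigma^{-1}(\theta-\hat\theta)$ and recognise that $\mathbb E_{\theta'\sim\mu_r^2}[e^{v^\intercal(\theta'-\hat\theta)}]$ is the MGF of $\mathcal N(0,r\Sigma)$ evaluated at $v$, which equals $\exp\bigl(\tfrac{r}{2}v^\intercal\Sigma v\bigr)$. A quick calculation gives $v^\intercal\Sigma v=4(\theta-\hat\theta)^\intercal\Sigma^{-1}(\theta-\hat\theta)$, so
\begin{equation*}
\mathbb E_{\theta'\sim\mu_r^2}\bigl[e^{2(\theta-\hat\theta)^\intercal\Sigma^{-1}(\theta'-\hat\theta)}\bigr]=\exp\bigl(2r(\theta-\hat\theta)^\intercal\Sigma^{-1}(\theta-\hat\theta)\bigr).
\end{equation*}

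Next I would plug in the characterisation of $\Theta^*(\delta_{\hat\theta})$ from \eqref{final2}: for any $\theta\in\Theta^*(\delta_{\hat\theta})$ we have $(\theta-\hat\theta)^\intercal\Sigma^{-1}(\theta-\hat\theta)=\log(1+\chi^2_{1-\alpha,D}/n)$, which I denote $\rho_n^2>0$. Hence the expectation is $e^{2r\rho_n^2}$, and a first-order Taylor expansion in $r$ yields
\begin{equation*}
\lim_{r\downarrow 0}\frac{1}{r}\Bigl(1-e^{2r\rho_n^2}\Bigr)=-2\rho_n^2=-2\log\Bigl(1+\frac{\chi^2_{1-\alpha,D}}{n}\Bigr),
\end{equation*}
which is a fixed negative constant independent of the particular $\theta\in\Theta^*(\delta_{\hat\theta})$ chosen, matching the parallel conclusion of \lemref{n1}.

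There is essentially no obstacle here beyond the Gaussian MGF computation; the one subtlety worth noting is that the value $\rho_n^2$ depends only on the norm $(\theta-\hat\theta)^\intercal\Sigma^{-1}(\theta-\hat\theta)$, so the quantity is indeed constant across $\theta\in\Theta^*(\delta_{\hat\theta})$, which justifies calling it a ``fixed'' negative value. Combined with Theorem \ref{more}, this lemma (together with \lemref{n1}) immediately implies Corollary \ref{variability} for $\mu_r^2$, since $l^+(0)=(1+\chi^2_{1-\alpha,D}/n)\cdot\lim_{r\downarrow 0}r^{-1}(1-\mathbb E_{\theta'\sim\mu_r^2}[e^{2(\theta-\hat\theta)^\intercal\Sigma^{-1}(\theta'-\hat\theta)}])<0$.
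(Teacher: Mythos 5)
Your proposal is correct and follows essentially the same route as the paper: both reduce the expectation to the moment generating function of $\mathcal N(0,r\Sigma)$, obtain $e^{2r\rho_n^2}$ with $\rho_n^2=\log(1+\chi^2_{1-\alpha,D}/n)$ from the characterisation of $\Theta^*(\delta_{\hat\theta})$, and conclude the limit is $-2\rho_n^2<0$. Your version is marginally sharper in that you evaluate the limit exactly via Taylor expansion rather than bounding $e^{2r\rho_n^2}\geq 1+2r\rho_n^2$ as the paper does, but the argument is the same.
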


% ,  the $D$-dimensional Gaussian with covariance matrix $rI$
\begin{proof}[Proof of Lemma \ref{n2}]
	For any $\theta \in \Theta^*(\delta_{\hat\theta})$, we have $\|\Sigma^{-1/2}(\theta-\hat\theta)\|_2=\sqrt {\log (1+\frac{\chi^2_{1-\alpha, D}}{n})}$. Denote $\rho_n=\sqrt {\log (1+\frac{\chi^2_{1-\alpha, D}}{n})}$. Furthermore, under $\theta' \sim \mu^2_r(d\theta')$, we have $\Sigma^{-1/2}(\theta'-\hat\theta)\sim \mathcal{N}(0,rI_D)$. Using the moment generating function for Gaussian random variables, we have
	\begin{align*}
	\mathbb E_{\theta'\sim\mu^2_{r}}[e^{2(\theta-\hat\theta)^\intercal\Sigma^{-1}(\theta'-\hat\theta)}]=e^{(2r \cdot (\theta-\hat\theta)^\intercal\Sigma^{-1}(\theta'-\hat\theta) )}=e^{2r\rho_n^2} \geq 1+2r\rho_n^2.
	\end{align*}
	Now it follows that
	\begin{equation*}
	\lim_{r\downarrow 0}\frac{1}{r}\Big(1-\mathbb E_{\theta'\sim\mu^1_{r}}[e^{2(\theta-\hat\theta)^\intercal\Sigma^{-1}(\theta'-\hat\theta)}]\Big) \leq -2\rho_n^2.
	\end{equation*}
\end{proof}

\begin{proof}[Proof of Corollary \ref{variability}]
Lemmas \ref{n1} and \ref{n2} combined with \eqref{dproof} indicate that increasing $r$ to positive value would produce a descent direction for $l(r)$ at $r=0$.
\end{proof}
\begin{proof}[Proof of Corollary \ref{cod2}]
We proceed the proof as in Proposition \ref{cod1}. The proof for the case of $\mu_r^1(d\theta)$ is entirely similar. For the proof of the case $\mu_r^2(d\theta)$, we simply notice that if $Y\sim \mathbb P_t$, then
\begin{equation*}
    Y {\buildrel \mathcal D \over =} (1-U_t)(\hat\theta+X_1)+U_t(\hat\theta+\sqrt{r} X_2+X_3),
\end{equation*}
where $U_t$ is an independent Bernoulli variable with success rate $t$ and $X_1,X_2,X_3$ are independent $\mathcal N(0,\Sigma)$. Then, it follows that
\begin{equation*}
    \Sigma^{-1/2}(Y-\hat\theta) {\buildrel \mathcal D \over =} (1-U_t) Z_1+ U_t(\sqrt{r} Z_2 +Z_3)
\end{equation*}
where $Z_1,Z_2,Z_3$ are now independent $\mathcal N(0,I_D)$. Consequently, the rotational invariance of $Z$ now follows from the rotational invariance of $Z_1,Z_2,Z_3$ and their independence.

\end{proof}
\end{document}